\documentclass[11pt]{article}
\usepackage{enumitem}
\usepackage[square,numbers]{natbib}
\usepackage{amsmath,amsthm,amssymb}
\usepackage{graphicx,psfrag,epsf}
\usepackage{epstopdf}
\usepackage{epsfig}
\usepackage{ulem}
\usepackage{subcaption}
\usepackage{bbm}	
\usepackage{color}
\usepackage{mathtools} 
\usepackage{authblk}
\usepackage{tikz}
\usepackage[all,cmtip,color]{xy}
\usepackage{stmaryrd}

\usepackage{algorithm}
\usepackage{algorithmic}
\usepackage{colortbl}
\usepackage{tabulary}

\newcommand{\bmT}{\backslash \mT}
\RequirePackage[colorlinks,citecolor=blue,urlcolor=black]{hyperref}

\theoremstyle{plain}
\newtheorem{lemma}{Lemma}
\newtheorem{theorem}{Theorem}
\newtheorem{ass}{Assumption}
\newtheorem{proposition}{Proposition}
\newtheorem{corollary}{Corollary}

\theoremstyle{definition} 
\newtheorem{definition}{Definition} 
\newtheorem{example}{Example}
\newtheorem{remark}{Remark}
\newcommand{\iid}{\stackrel{iid}{\sim}}

\newcommand{\wt}[1]{\widetilde{#1}}

\newcommand{\bm}[1]{\boldsymbol{#1}}

\newcommand\mC{\mathcal C}

\def\C {\,|\:}

\newcommand\mN{\mathcal N}
\newcommand\mT{\mathcal T}
\newcommand\mG{\mathcal G}

\newcommand\mP{\mathcal P}
\newcommand\mX{\mathcal X}

\newcommand\mE{\mathcal E}
\newcommand\E{\mathbb E}

\newcommand\bG{\Gamma}
\renewcommand\P{\mathbb P}

\newcommand\mU{\mathcal U}
\newcommand\mA{\mathcal A}
\newcommand\mB{\mathcal B}

\newcommand\R{\mathbb R}
\renewcommand\b{\bm{\beta}}
\newcommand\e{\mathrm e}
\newcommand\N{\mathbb N}
\newcommand\bT{\mathbb T}

\newcommand{\Lmax}{L_{max}}

\newcommand{\blind}{0}

\addtolength{\oddsidemargin}{-0.45in}%
\addtolength{\evensidemargin}{-0.45in}%
\addtolength{\textwidth}{1.1in}%
\addtolength{\textheight}{1.3in}%
\addtolength{\topmargin}{-.8in}%

\makeatletter
\newcommand{\mylabel}[2]{#2\def\@currentlabel{#2}\label{#1}}
\makeatother

\begin{document}

\def\spacingset#1{\renewcommand{\baselinestretch}%
{#1}\small\normalsize} \spacingset{1}


\if0\blind
{
      \title{\sf
      On Mixing Rates for Bayesian CART
  }

\author{Jungeum Kim\thanks{Jungeum Kim is a postdoctoral researcher at the Booth School of Business of the University of Chicago (\texttt{jungeum.kim@chicagobooth.edu})}\, }
\author{Veronika Ro{\v{c}}kov{\'a}\thanks{Veronika Ro{\v{c}}kov{\'a} is  Professor in Econometrics and Statistics and James S. Kemper Faculty Scholar at the Booth School of Business of the University of Chicago (\texttt{veronika.rockova@chicagobooth.edu}). The author gratefully acknowledges support from the James S. Kemper Foundation Faculty Research Fund at the Booth School of Business as well as the National Science Foundation (DMS:1944740). }}
\affil{Booth School of Business, University of Chicago}

\date{}
  \maketitle
} \fi

\if1\blind
{
  \bigskip
  \bigskip
  \bigskip
  \begin{center}
    {\LARGE\bf Title}
\end{center}
  \medskip
} \fi
\begin{abstract}
The success of Bayesian inference with MCMC depends critically on Markov chains rapidly reaching the posterior distribution. Despite the plentitude of inferential theory for posteriors in Bayesian non-parametrics, convergence properties of MCMC algorithms that simulate from such ideal inferential targets are not thoroughly understood. This work focuses on the Bayesian CART {\em algorithm} which forms a building block of Bayesian Additive Regression Trees (BART). We derive upper bounds on mixing times for typical posteriors under various proposal distributions. Exploiting the wavelet representation of trees, we provide sufficient conditions for Bayesian CART to mix well (polynomially) under certain hierarchical connectivity restrictions on the signal. We also derive a negative result showing that Bayesian CART (based on simple {\em grow} and {\em prune} steps) cannot reach deep isolated signals in faster than exponential mixing time. To remediate myopic tree exploration, we propose Twiggy Bayesian CART which attaches/detaches entire twigs (not just single nodes) in the proposal distribution. We show polynomial mixing of Twiggy Bayesian CART without assuming that the signal is connected on a tree. Going further, we show that informed variants achieve even faster mixing. A thorough simulation study highlights discrepancies between spike-and-slab priors and Bayesian CART under a variety of proposals.
\end{abstract}

\spacingset{1.2}
\section{Introduction}

The advent of Markov Chain Monte Carlo (MCMC) has accelerated the widespread adoption of Bayesian methods in practice.
Bayesian inference via  MCMC simulation, however, depends critically on Markov chains reaching their stationary distribution reasonably fast. 
The folk wisdom is that MCMC is far slower than optimization and is only warranted when uncertainty quantification is desperately needed \citep{ma2019sampling}.
Positive findings have nevertheless been reported where rapid (polynomial)   MCMC mixing times are, in fact, attainable in complex combinatorial problems (such as  Bayesian variable selection \cite{yang2016computational}). 
This paper aims to create similar reasons for optimism (as well as caution) in the context for Bayesian tree-based regression.

Bayesian tree-based regression (Bayesian CART of \citep{denison1998bayesian,chipman1998bayesian} and BART of \citep{chipman2010bart}) is one of the most popular machine learning tools in practice today.  A host of frequentist theory now exists to certify their inferential validity \citep{castillo2021uncertainty,rockova2021ideal, jeong2020art, linero2018bayesian}. While estimation and inferential theory already exists, properties of MCMC approximations to these ideal inferential targets are conspicuously missing.  This paper addresses {\em computational} properties of the Bayesian CART algorithm \citep{denison1998bayesian,chipman1998bayesian} as opposed to statistical properties of the Bayesian CART posterior. We attempt to quantify (with lower and upper bounds) the speed at which practically used MCMC algorithms converge to the ideal inferential targets. Characterizations of MCMC mixing times for Bayesian CART (besides a lower bound in a recent independent paper \cite{ronen2022mixing}) have been unavailable. 

 There is an apparent disconnect between theory for optimization and sampling \citep{ma2019sampling} and between theory for posterior distributions and their MCMC approximations. 
Bayesian CART  implementations \citep{denison1998bayesian,chipman1998bayesian} are instantiations of the Metropolis-Hastings (MH) algorithm \citep{metropolis1953equation}  with local grow and prune proposal steps for addition or deletion of a node. The BART algorithm is essentially a Bayesian back-fitting extension where Bayesian CART is applied to the residuals for each individual tree.
In spite of widespread popularity, difficulties in mixing have been reported \citep{chipman1998bayesian,wu2007bayesian, pratola2016efficient, linero2017review,hill2020bayesian}. Several enhancements have been proposed such as modifications of the proposal \citep{wu2007bayesian, pratola2016efficient},  ``warm start" initializations \citep{he2021stochastic}, or running multiple chains \citep{carnegie2019comment, dorie2019automated}. This work attempts to characterize the computational bottlenecks of Bayesian CART and performs a comparative study of various proposal distributions  in terms of mixing times. 

Our  computational complexity analysis builds on several fundamental papers studying Metropolis procedures \citep{lovasz1993random, frieze1994sampling, mengersen1996rates}. Notably,
 \citep{mengersen1996rates} derive necessary and sufficient conditions for  MH algorithms (with independent or symmetric proposals) to converge at a geometric rate to a prescribed continuous distribution. \citep{belloni2009computational} study computational complexity of MCMC  based on Metropolis random walks  as both the sample and parameter dimensions grow to infinity for non-concave and possibly non-smooth likelihoods. 
 We focus on a spectral bound approach suitable for Markov chains whose states are combinatorial structures.
For finite-state Markov chains, the spectral gap can be bounded in terms of quantities associated with its graph \citep{jerrum1988conductance, diaconis1991geometric, frigessi1993convergence}. Perhaps the first systematic approach to handling spectral bounds was developed in \citep{lawler1988bounds} using the conductance concept due to \citep{cheeger2015lower}.
Conductance  is a measure of edge expansion of the Markov chain, see \citep{lovasz1990mixing}  who proved the connection between conductance and convergence for the continuous state space. 
Lower bounds on the conductance, which give upper mixing bounds, are typically obtained by a technique of canonical paths where the idea is to find a set of paths such that no edge is very heavily congested. 
By using the canonical path argument, \citep{yang2016computational} show the rapid mixing of Bayesian variable selection. This bound is improved in \citep{zhou2021dimension} in the context of informed MCMC (that uses posterior information in the proposal) using the drift condition of \citep{jerison2016drift} rooted in the coupling inequality \citep{pitman1976coupling, lindvall2002lectures}.  We consider locally informed proposals as well and, using similar drift conditions, we conclude linear mixing in $n$.
Our work draws parallels between tree-based regression and structured wavelet shrinkage \citep{castillo2021uncertainty}.
The wavelet representation of dyadic trees  turns the tree selection problem into a variable selection problem with hierarchical constraints. 
The constraint creates certain reachability barriers and  requires more sophisticated movements across the state space and a more careful design of canonical 
paths. In this work, we navigate the complex relationship between the MH proposal distribution and the mixing rate. While \citep{yang2016computational} used the deterministic stepwise selection algorithm as  an inspiration to construct canonical paths, we use the CART algorithm \citep{breiman2017classification} as an inspiration. 

We primarily focus on a one-dimensional setting with dyadic splits (noting that non-dyadic CART can be analyzed in a similar manner as in \cite{castillo2021uncertainty}) where the MH proposal distribution consists of a simple attachment of a terminal node (GROW) or a detachment of two sister bottom nodes (PRUNE) \citep{denison1998bayesian,chipman1998bayesian}.  This algorithm is used in practice, for example in the context of uncertainty quantification for non-parametric regression with spatially varying smoothness \citep{rockova2021ideal}.
Rapid mixing rate bounds in \citep{yang2016computational} and \citep{zhou2021dimension} critically rely on an asymptotic unimodality of the posterior distribution which can be translated in our context as model selection consistency. We first characterize sufficient conditions for tree selection consistency.
Second, we show a negative result (an exponential mixing lower bound) where Bayesian CART fails at reaching deep isolated signals obscured by layers of noise.  This motivates our proposal of Twiggy Bayesian CART, a new MH proposal distribution which attaches and deletes twigs (as opposed to individual nodes) to extend reachability. We show that Twiggy Bayesian CART attains polynomial mixing in non-parametric regression when the truth is a step function. 
To dilute the negative message about Bayesian CART, we show that it, in fact, achieves rapid mixing when the truth consists of wavelet signals that are connected along a tree. This is expected since myopic additions and deletions can reach deep signal through intermediate steps. It is interesting that the upper bound for Bayesian CART is then faster by a factor of $n$ relative to spike-and-slab priors \citep{yang2016computational}. This  may indicate smoothing benefits of  tree-shaped regularization that avoids the addition of spurious high-resolution signals. Finally, using the two-drift condition argument \citep{zhou2021dimension}, we show linear mixing of Markov chains under locally informed proposals.

Recently, independently from our work, \citep{ronen2022mixing} studied  Bayesian CART with PRUNE and GROW movements in a multi-dimensional setting, where  a lower bound that scales exponentially with $n$ is shown exploiting the bottleneck that happens when one splits on a wrong variable early in the tree. Our work differs in several aspects: we exploit the wavelet formulation of trees to show consistency and upper bounds on mixing. The paper \citep{ronen2022mixing} only discusses a lower bound. Our lower  bound is for the univariate case and focuses on the bottleneck that happens when deep signal is surrounded by noise.

{The paper is structured as follows. In Section \ref{sec:BC}, we provide a brief review of the Bayesian CART and establish its tree selection consistency. The Twiggy Bayesian CART and the informed variations are introduced in Section \ref{sec:patulous}. The theoretical framework and analysis of the mixing rates are presented in Section \ref{sec:mixing} and Section \ref{sec:Mixing_Rate}. The numerical study in Section \ref{sec:neumerical} reinforces our theoretical findings on both simulation and real datasets. The paper concludes with Section \ref{sec:conclusion}. }

\section{Bayesian CART}\label{sec:BC}
Regression trees perform structured wavelet shrinkage \citep{donoho1995adapting, castillo2021uncertainty}, where the underlying tree provides a skeleton for signal coefficients.
This regression re-interpretation of Bayesian CART allows for straightforward implementations of the Bayesian CART algorithm
through closed-form tree posterior probabilities.

\subsection{Trees as Wavelets}\label{sec:one_dime_model}
We assume that observed continuous outcomes $Y=(Y_1,\dots,Y_n)'$ arise from 
\begin{equation}\label{model}
Y_i=f_0(x_i)+\varepsilon_i,\quad \varepsilon_i\iid\mathcal N(0,1),\quad i=1,\dots, n=2^{L_{max}+1}
\end{equation}
where $\mX=\{x_i=i/n:1\leq i\leq n\}$ are fixed observations on a regular\footnote{The fixed grid assumption $x_i=i/n$ could be avoided using either unbalanced Haar wavelets \citep{fryzlewicz2007unbalanced} or regularity relaxations \citep{rockova2021ideal}.} grid. We focus on wavelet  reconstructions of  $f_0$ using the standard Haar wavelet basis 
$\psi_{-10}(x)=I_{[0,1]}(x)$ and $\psi_{lk}(x)=2^{l/2}\psi(2^lx-k)$
obtained with orthonormal dilation-translations of  $\psi=I_{(0,1/2]}-I_{(1/2,1]}$.
Denote with $\bm X=(x_{ij})$ the $(n\times p)$ regression matrix of $p=2^{L_{max}}=n/2$ regressors constructed from Haar wavelets $\psi_{lk}$ up to the maximal resolution $\Lmax$, i.e.
\begin{equation}\label{design_mat}
x_{ij}=\begin{cases}
\psi_{-10}(x_i)=1& \quad\text{for}\quad j=1\\
\psi_{lk}(x_i)&\quad \text{for}\quad j=2^l+k+1.
\end{cases}
\end{equation}
We assume that the columns of $\bm X$ have been ordered according to the index $2^l+k$ (increasing ordering).
We denote with $F_0=(f_0(x_1),\dots, f_0(x_n))'$ the vector of realized values of the true regression function at design points. The non-parametric regression model \eqref{model} can be written in a matrix form 
\begin{equation}\label{eq:main_model}
Y=\bm X\b^*+ \bm\nu,\quad\text{where}\quad \bm\nu=F_0-\bm X\b^*+\bm\varepsilon\,\,\,\text{with}\,\,\,\bm\varepsilon\sim\mathcal N(0,I_n),
\end{equation}
where   $\bm \beta^*$ is an ordered vector of wavelet coefficients $\beta_{lk}^*=\langle\psi_{lk},f_0\rangle$.  Bayesian dyadic CART (with splits at dyadic rationals) corresponds to tree-shaped wavelet reconstructions \cite{castillo2021uncertainty}, as we re-iterate in Section \ref{section:bart_prior_only} below.

\begin{definition}(Tree) By a tree $\mT$, we understand a collection of hierarchically organized nodes $(l,k)$ such that
$(l,k)\in\mT\Rightarrow (j,\lfloor k/2^{l-j}\rfloor)\in\mT$  for  $j=0,\dots, l-1.$
We distinguish between two types of nodes: {\em internal} ones $\mT_{int}=\{(l,k)\in\mT: \{(l+1,2k),(l+1,2k+1)\}\in\mT\}\cup (-1,0)$  and {\em external} ones $\mT_{ext}=\mT\backslash\mT_{int}$ which are at the bottom of the tree. We define  a set of {\em pre-terminal} nodes  $\mP(\mT)=\{(l,k)\in\mT_{int}: \{(l+1,2k),(l+1,2k+1)\}\in\mT_{ext}\}$ as those internal nodes whose children are external. The null tree is defined as $\mT_{null}=\{(-1,0)\}$ and the full tree at the level 
$L$ is  defined as $\mT_{full}^L=\{(l,k):l<L\}$.
\end{definition}
\noindent We will often denote with  $\b_\mT=(\beta_{lk}:(l,k)\in\mT_{int})'$ the vector of ordered coefficients {\em inside} the tree\footnote{there are $|\mT_{ext}|$ of those} and with $\b_{\bmT}$ the complement. 
Similarly, for a given tree structure $\mT$ we often split the design matrix $\bm X$ into active covariates $\bm X_{\mT}$ (that correspond to $(l,k)\in\mT_{int}$) and the complementary inactive ones $\bm X_{\bmT}$.

\subsubsection{The Bayesian CART Posterior}\label{section:bart_prior_only}
The distinguishing feature of Bayesian CART, compared to selective wavelet reconstructions such as {\em RiskShrink} of \citep{donoho1994ideal}, is that the pattern of sparsity has a tree structure.
Namely,  for a chosen maximal tree depth $L\leq L_{max}$, we assume the tree-shaped wavelet shrinkage prior \citep{castillo2021uncertainty}
\begin{align}
\mT \qquad & \sim \qquad \Pi(\mT) \label{eq:prior_beta1} \\
\{\beta_{lk}\}_{l< L,k} \C \mT\ & \sim\  \Pi(\b_\mT) \, \otimes 
\bigotimes_{(l,k)\notin\mT_{int}} \delta_0(\beta_{lk}).\label{eq:prior_beta2} 
\end{align} 
Similarly as in \cite{rockova2021ideal}, we consider the unit information
$g$-prior $\b_\mT\sim\mathcal N(0,g_n(\bm X_\mT '\bm X_\mT)^{-1})$ with $g_n=n$ which coincides with the standard Gaussian prior $\Pi(\b_\mT)=\prod_{(l,k)\in\mT_{int}}\phi(\beta_{lk};0,1)$
 in regular designs.

The integral component of Bayesian CART is the tree prior  $\Pi(\mT)$ over a set  $\bT_L$ of all trees up to the maximal chosen depth $L\leq L_{max}$.
The Bayesian CART prior in \citep{chipman1998bayesian} uses the heterogeneous Galton-Watson (GW) process (see  Section 2.1 in \citep{castillo2021uncertainty} and  \citep{rovckova2019theory}) with node split probabilities
\begin{equation}\label{eq:split_prob}
p_{lk}=\mathbb{P}[ (l,k)\in\mT_{int}]
\end{equation}
which need to be small in order to prevent the trees from growing indefinitely. While \citep{chipman1998bayesian} suggest $p_{lk}=\alpha/(1+l)^\gamma$ for some $\alpha\in(0,1)$ and $\gamma>0$, we will assume that $p_{lk}$ decays faster, potentially depending on $n$. Given $p_{lk}$, the tree prior probability for $\mT\in\bT_L$ satisfies
\begin{equation}\label{eq:tree_prior_1dim}
\Pi (\mT) \propto \prod_{(l,k)\in \mT_{int}} p_{lk}\prod_{(l,k)\in\mT_{ext}}(1-p_{lk}).
\end{equation}

The conditional conjugacy of the Gaussian prior yields tractable posterior (up to multiplication) which is useful for Metropolis-Hastings implementations. In particular, for $\Sigma_{\mT}=c_n(\bm X_{\mT}'\bm X_{\mT})^{-1}$ with $c_n=n/(n+1)$
we have $\Pi(\mT\C Y)\propto \Pi(\mT)\times N_Y(\mT)$, where
\begin{equation}
N_Y(\mT)= \frac{\exp\left\{-\frac{1}{2} Y'[I-\bm X_\mT \Sigma_\mT \bm X_\mT']   Y\right\}}{(2\pi)^{n/2}(1+n)^{|\mT_{ext}|/2}}.\label{eq:compare_post}
\end{equation}
The Bayesian CART posterior has many favorable properties, such as near-minimax rate adaptation under the supremum loss  \citep{castillo2021uncertainty} for $\alpha$-H\"{o}lderian functions  with $\alpha\leq 1$. This work focuses on computational (not statistical) properties of Bayesian CART. The mixing rate of the Bayesian CART MCMC algorithm \citep{denison1998bayesian,chipman1998bayesian}, however,  ultimately depends on the structure of the underlying truth $f_0$. For clearer exposition of our findings, we focus on the following two assumptions on $f_0$, which are consonant with the tree (step function) model. 

\begin{ass}\label{ass:f_classic}
Assume that  $f_0$ in \eqref{model} satisfies
$f_0 (x) = \sum_{(l,k)\in \mathcal{B}} \psi_{lk}(x)\beta_{lk}^*$,
for some subset $\mathcal B \subseteq \{(l,k): l< L\}$ such that   $A\log n/\sqrt{n}<|\beta_{lk}^*|<C_{f_0}$ for all $(l,k)\in  \mathcal{B}$ for some $A>0$ and $C_{f_0}>0$. Define $\mT^*\in\bT_{L}$ as the {\em smallest} tree that contains all signal nodes in  $\mathcal B$ as internal nodes.
\begin{itemize}
\item[(a)] Assume that $\mathcal B\in \bT_{L}$, i.e. $\mT^*_{int}=\mathcal B$.
\item[(b)]  Assume that $\mathcal B\notin \bT_{L}$.
\end{itemize}
\end{ass}

\begin{remark}A class of tree-sparse functions compatible with Assumption \ref{ass:f_classic} (a) is discussed in \citep{baraniuk2010model}.
For example, signal discontinuity  gives rise to a chain of large wavelet coefficients connected  in the wavelet tree from the root to a leaf (\citep{baraniuk2010model}, Figure 2). The connected signal property has been leveraged in a myriad of wavelet-based processing and compression algorithms \citep{ shapiro1993embedded, crouse1998wavelet}. 
 Assumption \ref{ass:f_classic} (a) is  intentionally optimistic in the sense that Bayesian CART {\em is expected} do well on a tree-shaped truth compared to, for example, spike-and-slab priors that do not have structured regularization. We will see this superiority in both our numerical as well as theoretical study.
\end{remark}

\begin{remark}
Unlike previous investigations of Bayesian CART \cite{rockova2021ideal,castillo2021uncertainty}, we do not assume H\"{o}lderian $f_0$ which alone does not guarantee tree selection consistency.
Our results can be however replicated for {\em structured} H\"{o}lderian signals  under suitable signal gap assumption for coefficients inside and outside $\mT^*$. 
\end{remark}

An essential first step towards obtaining upper bounds on  Markov chain mixing times  is tree selection consistency.  The following Theorem shows that  under Assumption \ref{ass:f_classic} the posterior concentrates on $\mT^*$, the minimal  tree spanning over signal.  
Similar consistency requirements  (or log-concavity and asymptotic normality assumptions) have been required to obtain 
rapid convergence rate  statements for  Markov chains \citep{applegate1991sampling,lovasz2004hit, yang2016computational, zhou2021dimension}.
While our theory has been derived for the regular fixed design,  similar theoretical conclusions can be obtained also for fixed irregular design as in \cite{rockova2021ideal} using the unit information $g$-prior. 

\begin{theorem}(Tree Selection Consistency)\label{lemma:consist}
Assume the model \eqref{model}, the Bayesian CART prior from Section \ref{section:bart_prior_only} with $p_{lk}=n^{-c}$ for $ c>5/2$. Under Assumption \ref{ass:f_classic} for large enough $A>0$
we have  with probability at least $1-4/n$ 
$$
\Pi(\mT^*\C Y)\geq 1-{ \frac{1}{n^{c-5/2}-1}}-\frac{1}{n^{A^2/8\log n}}.
$$
\end{theorem}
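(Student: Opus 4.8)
The plan is to work directly with the closed-form posterior odds. Put $\rho(\mT):=\Pi(\mT)N_Y(\mT)/\big(\Pi(\mT^*)N_Y(\mT^*)\big)$, so that $\Pi(\mT^*\C Y)=\big(1+\sum_{\mT\neq\mT^*}\rho(\mT)\big)^{-1}\ge 1-\sum_{\mT\neq\mT^*}\rho(\mT)$, and it is enough to bound this sum by $\tfrac{1}{n^{c-5/2}-1}+n^{-A^2\log n/8}$ on a large-probability event. The decisive simplification is that on the regular dyadic grid the Haar design is orthogonal, $\bm X'\bm X=nI_p$, whence $\Sigma_\mT=(n+1)^{-1}I$, $\bm X_\mT\Sigma_\mT\bm X_\mT'$ is a scaled projection, and $N_Y(\mT)\propto(1+n)^{-|\mT_{ext}|/2}\exp\{\tfrac{1}{2(n+1)}\sum_{(l,k)\in\mT_{int}}(\bm X_{(l,k)}'Y)^2\}$; moreover under Assumption~\ref{ass:f_classic} the truth lies in the span of the dictionary, so $\bm\nu=\bm\varepsilon$ and the coordinates $\bm X_{(l,k)}'Y=n\beta^*_{lk}+\bm X_{(l,k)}'\bm\varepsilon$ are independent $\mathcal N(n\beta^*_{lk},n)$. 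Consequently, for any $\mT_1\subseteq\mT_2$ with $S=\mT_{2,int}\setminus\mT_{1,int}$, the ratio $\Pi(\mT_2)N_Y(\mT_2)/\big(\Pi(\mT_1)N_Y(\mT_1)\big)$ factorizes over $S$ into per-node terms $\lesssim n^{-c}(1+n)^{-1/2}\exp\{(\bm X_{(l,k)}'Y)^2/(2(n+1))\}$. I would then fix the event $\mathcal E$ on which every one of the $p$ noise coordinates satisfies $(\bm X_{(l,k)}'\bm\varepsilon)^2\le 2n\log(2pn)$ and every signal coordinate $(l,k)\in\mathcal B$ satisfies $|\bm X_{(l,k)}'\bm\varepsilon|\le\tfrac12 n|\beta^*_{lk}|$; Gaussian tails and a union bound give $\P(\mathcal E)\ge 1-4/n$ for $n$ (equivalently $A$) large.

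The sum is then split according to whether $\mT\supsetneq\mT^*$ (``overfitting'') or $\mT\not\supseteq\mT^*$. For an overfitting $\mT$ the added nodes $S=\mT_{int}\setminus\mT^*_{int}$ carry no signal, so on $\mathcal E$ each per-node factor is $\lesssim n^{-c}(1+n)^{-1/2}\cdot 2pn\le 2n^{3/2-c}$; since there are at most $p^m$ valid extensions of $\mT^*$ of size $m$ and $2p=n$, one obtains $\sum_{\mT\supsetneq\mT^*}\rho(\mT)\le\sum_{m\ge1}p^m(2n^{3/2-c})^m=\sum_{m\ge1}(n^{5/2-c})^m=\tfrac{1}{n^{c-5/2}-1}$, which is the first term of the bound and is exactly where the hypothesis $c>5/2$ is used.

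For $\mT\not\supseteq\mT^*$ I would first record the structural facts that every leaf of $\mT^*_{int}$ belongs to $\mathcal B$ (by minimality of $\mT^*$), that every root-to-leaf chain of $\mT^*_{int}$ therefore ends in a signal node and has length $O(\log n)$, and that $\mathcal B\subseteq\mT_{int}$ together with hierarchical closure of $\mT_{int}$ would force $\mT^*_{int}\subseteq\mT_{int}$; hence any $\mT\not\supseteq\mT^*$ necessarily misses a signal node. Writing $\mathcal V:=\mT\vee\mT^*$ (so $\mathcal V_{int}=\mT_{int}\cup\mT^*_{int}$ and $S':=\mathcal V_{int}\setminus\mT_{int}=\mT^*_{int}\setminus\mT_{int}\subseteq\mT^*_{int}$ contains at least one node of $\mathcal B$), I factor $\rho(\mT)=\rho(\mathcal V)\cdot\big(\Pi(\mT)N_Y(\mT)/(\Pi(\mathcal V)N_Y(\mathcal V))\big)$: the first factor is an overfitting ratio relative to $\mT^*$ and is $\le 1$ on $\mathcal E$, while the second factorizes over $S'$ with per-node terms $\lesssim n^{c+1/2}\exp\{-(\bm X_{(l,k)}'Y)^2/(2(n+1))\}$. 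A missed signal node has $|\bm X_{(l,k)}'Y|\ge\tfrac12 n|\beta^*_{lk}|\ge\tfrac12 A\sqrt n\log n$ on $\mathcal E$, so its factor is of order $n^{c+1/2}\cdot n^{-A^2\log n/8}$; the remaining nodes of $S'$ are ``filler'' ancestors of signal nodes, at most $O(\log n)$ of them per missed signal node, each contributing only a polynomial. Summing $\rho(\mT)$ over all non-superset $\mT$ is the same as summing $\prod_{(l,k)\in S'}(\text{per-node factor})$ over the decreasing subsets $S'$ of the poset $\mT^*_{int}$, which I would carry out by a recursion along $\mT^*_{int}$: every nonempty $S'$ contributes at most $q^{|S'\cap\mathcal B|}$ with $q=n^{O(\log n)}n^{-A^2\log n/8}<1$ for $A$ large, and the recursion collapses the sum to $\lesssim|\mT^*_{int}|\,q\le n^{-A^2\log n/8}$ once $A$ is taken large enough (relative to $c$) to absorb the $n^{O(\log n)}$ overhead. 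Assembling the two estimates on $\mathcal E$ gives the claim.

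The step I expect to be the main obstacle is this last sum: although each non-superset tree is individually suppressed by roughly $n^{-A^2\log n/8}$, there are $e^{\Theta(n)}$ of them, so a naive term-by-term bound is hopeless. The point is that the ``missed-signal'' penalty must be leveraged multiplicatively in the number of missed signal nodes and set against the $O(\log n)$ filler ancestors that unavoidably accompany each of them; the recursion over $\mT^*_{int}$ (equivalently a generating-function bound over decreasing node sets) is what turns this into a convergent series, and it is also why the final exponent $A^2/8$ is a conservative constant rather than the $\approx A^2/2$ a lossless calculation would suggest. The remaining bookkeeping (external-node factors, the precise union bound delivering $1-4/n$, and how large $A$ must be as a function of $c$) is routine.
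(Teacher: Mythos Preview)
Your overfitting calculation matches the paper's almost exactly. The difference is in the underfitting piece, and there is a real gap in your sketch. After writing $\rho(\mT)=\rho(\mathcal V)\cdot\big(\Pi(\mT)N_Y(\mT)/\Pi(\mathcal V)N_Y(\mathcal V)\big)$ and bounding $\rho(\mathcal V)\le 1$, you assert that ``summing $\rho(\mT)$ over all non-superset $\mT$ is the same as summing $\prod_{(l,k)\in S'}(\cdot)$ over decreasing $S'\subseteq\mT^*_{int}$.'' This is not true: the second factor depends only on $S'=\mT^*_{int}\setminus\mT_{int}$, but for each fixed $S'$ there are exponentially many $\mT$'s (with arbitrary extra nodes outside $\mT^*$), so replacing $\rho(\mathcal V)$ by $1$ and then summing blows up. The fix is straightforward---for fixed $S'$ the map $\mT\mapsto\mathcal V(\mT)$ is injective into $\{\mathcal V\supseteq\mT^*\}$, so $\sum_{\mT:S'(\mT)=S'}\rho(\mathcal V(\mT))\le 1+\tfrac{1}{n^{c-5/2}-1}$ by your overfitting bound---but it should be stated. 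After that, your recursion over downward-closed $S'$ (each such $S'$ being a disjoint union of full subtrees of $\mT^*_{int}$, each containing at least one signal leaf and at most $L$ filler ancestors per signal) is workable but is genuinely the hardest part of your route.

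The paper avoids this combinatorics entirely by handling underfitting one signal at a time, following Lemma~3 of \cite{castillo2021uncertainty}. For each $(l_S,k_S)\in\mathcal B$ and each $\mT$ missing it, one compares $\mT$ with $\mT^+$, the smallest supertree of $\mT$ containing $(l_S,k_S)$; on the event $\mA_n=\{\|\bm X'\bm\varepsilon\|_\infty\le 2\sqrt{n\log p}\}$ the ratio $\Pi(\mT\C Y)/\Pi(\mT^+\C Y)$ is uniformly at most $b(n)\lesssim\exp\{-A^2\log^2 n/4+O(\log^2 n)\}$, and since $\mT\mapsto\mT^+$ is at most $l_S$-to-one, $\Pi[(l_S,k_S)\notin\mT_{int}\C Y]\le l_S\,b(n)$. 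A union bound over the at most $2^L$ signals gives $\Pi[\bT_U\C Y]\le n^{-A^2\log n/8}$. This sidesteps both the sum over $S'$ and the filler-node bookkeeping; the price is that the constant in the exponent is looser, which is exactly the ``conservative $A^2/8$'' you anticipated.
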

\proof Section \ref{sec:proof_lemma_consist}.
\begin{remark}In the context of Bayesian inference with phylogenetic  trees, \citep{mossel2005phylogenetic}   show that when the data are generated by a mixture of two trees, many of the popular Markov chain take exponentially long to reach stationarity. Lemma \ref{lemma:consist} focuses on the less adverse situations when a single generative model is present that can be identified by the posterior.
\end{remark}

\begin{remark}
The consistency result in Theorem \ref{lemma:consist} is different from posterior concentration rate results in \cite{castillo2021uncertainty} and \cite{rockova2021ideal} for  H\"{o}lderian functions $f_0$
under the supremum loss. Due to the step function Assumption \ref{ass:f_classic}, we require a more aggressive split probability $p_{lk}=n^{-c}$ in Lemma \ref{lemma:consist} because we cannot leverage the decaying property of wavelet coefficients. 
\end{remark}

Much of the value of the optimality properties of the Bayesian CART posterior  (e.g. adaptation to local smoothness \cite{rockova2021ideal} and frequentist validity of inference about certain $f_0$ \cite{castillo2021uncertainty})
hinges on the ability to approximate this posterior well. 

\subsubsection{The Bayesian  CART Algorithm}\label{section:sampler}
The Bayesian CART algorithm is devised to explore the space of regression tree topologies by sequential sampling from the tree posterior distribution determined by \eqref{eq:compare_post}.
The two original algorithms \citep{denison1998bayesian,chipman1998bayesian} are based on Metropolis-Hastings ideas 
with an accept-reject proposal mechanism consisting of four basic proposal moves (add a node, delete a node, change a variable and change a split-point). Many variations were later proposed with more intricate moves, such as tree rotations \citep{gramacy2008bayesian, pratola2016efficient}, to better explore the tree space. 


The Bayesian CART algorithm generates a chain of trees $\mT^0,\mT^1,\dots$ which will gravitate toward regions charged with posterior probability.
Starting with an initial tree $\mT^0$,  transitions from $\mT^i$ to $\mT^{i+1}$ proceed in two steps:
(1) generate a candidate value $\wt\mT$  from a proposal distribution  $S(\mT^i\rightarrow\wt\mT)$ and 
(2) accept the proposal (i.e. $\mT^{i+1}=\wt\mT$) with a probability
\begin{equation}\label{eq:mh_accept}
\alpha(\mT^i,\wt\mT)=\min\left\{1, \frac{\Pi(\wt\mT\C Y)S(\wt\mT\rightarrow\mT^i)}{\Pi(\mT^i\C Y)S(\mT^i\rightarrow\wt\mT)}\right\}
\end{equation}
and set   $\mT^{i+1}=\mT^i$ otherwise.

Under weak conditions (Section 7.4 of \citep{robert1999monte}), the sequence obtained by this algorithm will be an irreducible and aperiodic Markov chain with a limiting distribution $\Pi(\mT\C Y)$. 
Below, we will describe a {\em dyadic} one-dimensional version of Bayesian CART \citep{chipman1998bayesian} which deploys a kernel  $S(\mT^i\rightarrow \wt\mT)$ that generates $\wt\mT$ from $\mT^i$ by randomly choosing among two steps (GROW and PRUNE). The algorithmic description of dyadic Bayesian CART we study is in Algorithm \ref{alg:original}. We describe the algorithm using our wavelet tree representation.

The GROW movement  chooses (uniformly at random) one terminal node, say $(\wt l, \wt k)$, and splits it. In particular, we have
$\wt \mT_{int}=\mT_{int}^i\cup \{(\wt l, \wt k)\}$ and  $\wt \mT_{ext}=\mT_{ext}^i\cup \{(\wt l+1, 2\wt k),(\wt l+1, 2\wt k+1) \}  \backslash \{(\wt l, \wt k)\}$ and 
\begin{equation} 
S_{GROW}(\mT^i\rightarrow\wt\mT)=\frac{1}{|\mT^i_{ext}|}\label{eq:grow}.
\end{equation}
The PRUNE movement reverses the GROW move by  choosing (uniformly at random) one pre-terminal node,   $(\wt l, \wt k)\in\mathcal P(\mT^i)$,  and by turning it into a terminal node.
In particular, we have $\wt \mT_{int}=\mT_{int}^i\backslash \{(\wt l, \wt k)\}$ and $\wt \mT_{ext}=\mT_{ext}^i{ \cup\{(\wt l, \wt k)\}}\backslash \{(\wt l+1, 2\wt k),(\wt l+1, 2\wt k+1) \}$
and
\begin{equation}
S_{PRUNE}(\mT^i\rightarrow\wt\mT)=\frac{1}{|\mP(\mT^i)|}.\label{eq:prune}
\end{equation} 
\smallskip
Combining the two moves, dyadic Bayesian CART has the following proposal distribution
\begin{equation}\label{eq:proposal}
S(\mT\rightarrow \wt\mT)=
 \Gamma(\mT) \times S_{GROW}(\mT\rightarrow \wt\mT)+  [1- \Gamma(\mT)]\times S_{PRUNE}(\mT\rightarrow \wt\mT), 
\end{equation}
where $\Gamma(\mT)$ is the grow binary indicator with $P[\Gamma(\mT)=1]=1/2$ for $\mT\notin\{ \mT_{null}, \mT_{full}^L\}$ and  $P[\Gamma(\mT_{null})]=1- P[\Gamma(\mT_{full}^L)]=1$.
The dyadic Bayesian CART algorithm was successfully deployed for estimating functions with spatially varying smoothness and for the construction of valid confidence sets \cite{rockova2021ideal}. Despite a simplified version of the full-blown Bayesian CART,   this toy algorithm will give us many useful insights  about computational bottlenecks. 
The GROW/PRUNE transition kernel performs only very local moves, not allowing bushy trees to be substantially restructured. This property makes this generic sampler susceptible to myopic encasement   
if initialized far away from high-posterior regions. While its poor mixing has been widely recognized in empirical studies 
 \citep{chipman1998bayesian,wu2007bayesian, pratola2016efficient, linero2017review,hill2020bayesian}, limited theoretical studies of the mixing times have been available  \citep{ronen2022mixing}.

\begin{figure}[t]
\includegraphics[width=1\linewidth]{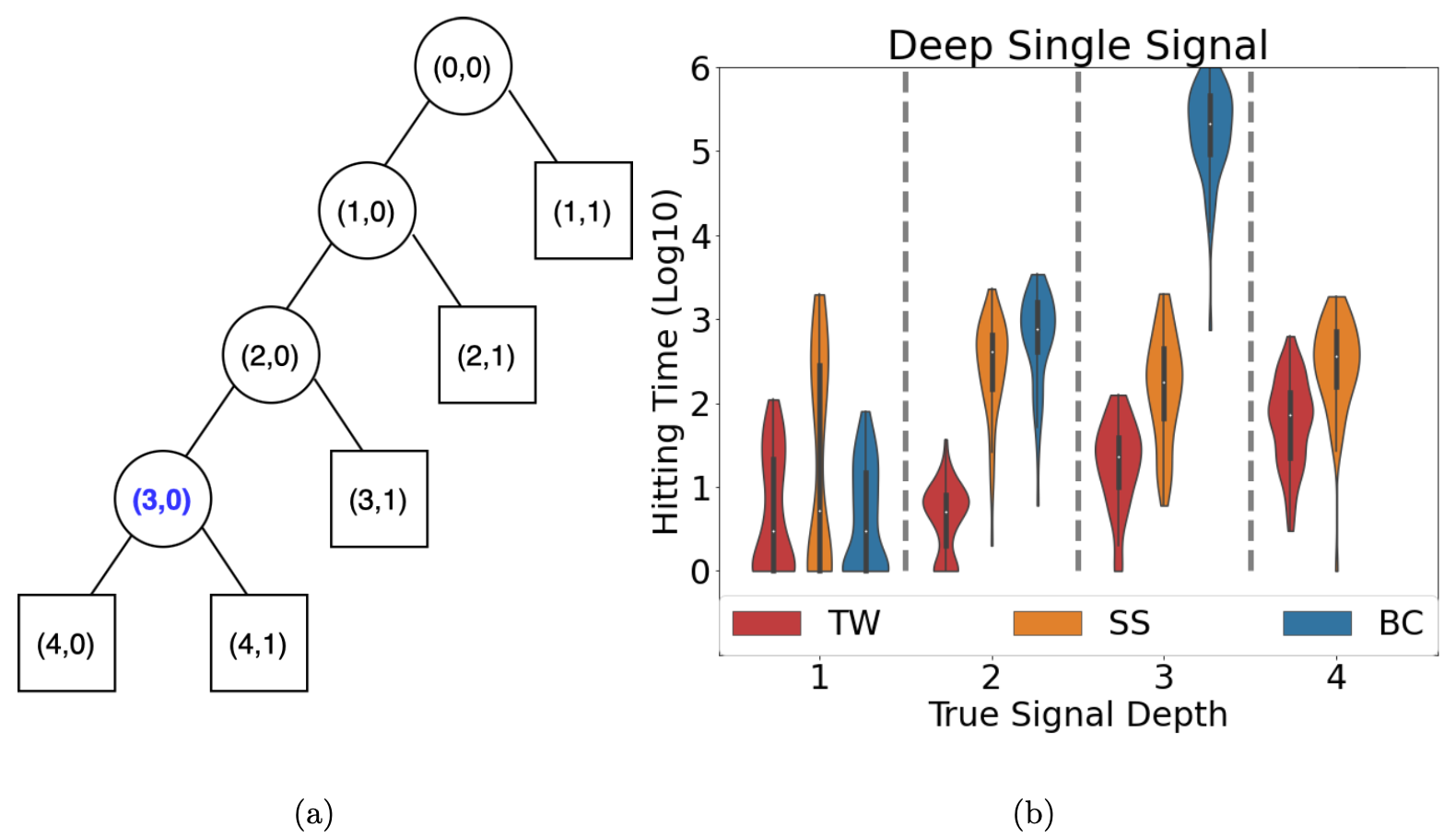}
\caption{ (a) An example of the minimal spanning tree $\mT^*$ from Example \ref{ex:caution} with oval internal and square external nodes. The signal node  is marked in blue. (b) The hitting time of 50 chains initialized at $(0,0)$. TW: Twiggy Bayesian CART, SS: Spike-and-Slab, BC: Bayesian CART. The deeper the signal is, the slower (exponentially) the hitting time of Bayesian CART. We investigate this phenomenon theoretically in Section \ref{sec:poor}.}\label{fig:motiv}
\end{figure}

\section{Bayesian CART with a Twist}\label{sec:patulous}
Local Metropolis-Hastings  proposals are known to induce poor mixing \citep{wu2007bayesian,pratola2016efficient} which may result in misleading  under-representations of uncertainty.  
In the context of trees, 
\citep{gramacy2008bayesian} remediate this issue by applying a rotation algorithm   \citep{sleator1988rotation} while \citep{pratola2016efficient} proposes various elaborate moves for radical restructuring (see also \citep{wu2007bayesian}). Another way to prevent single trees from getting stuck is by adding them up  and by performing Bayesian back-fitting (see the BART method of \citep{chipman2010bart}). Alternatives to MH samplers have also been recently explored, see \citep{lakshminarayanan2013top} for  Sequential Monte Carlo approach and  \citep{lakshminarayanan2015particle} for a particle Gibbs algorithm. We focus on the original Bayesian CART (dyadic version).
One source of mixing issues for Bayesian CART is illustrated in a cautionary tale example below.

\begin{example}{(The Pitfalls of  Bayesian CART)} \label{ex:caution}
Consider $f_0:[0,1]\rightarrow\R$  which satisfies Assumption \ref{ass:f_classic} (b) where $\mathcal B=\{(j,0)\}$ and  $\beta_{j,0}^*=2$. We also assume $n=2^{\Lmax+1}$ with  $\Lmax=8$. 
We consider the cases where the true signal depth grows, i.e.  $j\in\{1,2,3,4\}.$ We found that once the chain hits the signal node, it tends to stay around the minimal spanning tree $\mT^*$ (plotted in Figure \ref{fig:motiv}(a) for $j=3$). Therefore, as a proxy to mixing time, we measure the hitting time defined as $\tau=\min_{t\geq 0}\{ \mathcal B\subset \mT_{int}^t\}$. We run 50 chains for three algorithms: Bayesian CART, Twiggy Bayesian CART (to be introduced later), and Spike-and-Slab (one-site Metropolis-Hastings), where for all methods we use $p_{lk}=0.1$. All chains are initialized at the root node $(0,0)$. The violin plots of the  hitting times are in Figure \ref{fig:motiv} (b). We see how the hitting time of Bayesian CART slows down exponentially as the depth of the signal increases. When the signal depth is 4, none of the 50 Bayesian CART chains hit within 1,000,000 iterations. In conclusion, Bayesian CART may not be able to capture signal if there are layers of noise separating the initialization and the signal. We will prove this theoretically later in Section \ref{sec:poor}.  On the other hand, as Spike-and-Slab does not have a tree structure, its performance is consistent across different signal depth levels. 
\end{example}

Example \ref{ex:caution} may be unnecessarily pessimistic for Bayesian CART. The following example demonstrates that Bayesian CART actually mixes well when the signal is connected on a tree.

\begin{example}{(The Benefits of Bayesian CART)}\label{ex:caution2}
In contrast with Example \ref{ex:caution}, we now  consider Assumption \ref{ass:f_classic} (a) where $\mathcal B = \mT_{full}^j$ for $j\in\{1,2,3,4\}$ (plotted in Figure \ref{fig:motiv2}(a) for $j=3$). We consider the same simulation settings as in Example \ref{ex:caution}. The violin plots of  hitting times (for the entire set $\mathcal B$) in Figure \ref{fig:motiv2} (b) show superiority of tree-shaped regularization where spike-and-slab takes longer to hit the entire group of connected signals. The stable increase of the hitting time of Bayesian CART is in sheer contrast with the exponential slowdown in Figure \ref{fig:motiv} (b). We investigate mixing of Bayesian CART theoretically for situations like this one in Section \ref{sec:BCART_ass1}.

\begin{figure}
\includegraphics[width=1\linewidth]{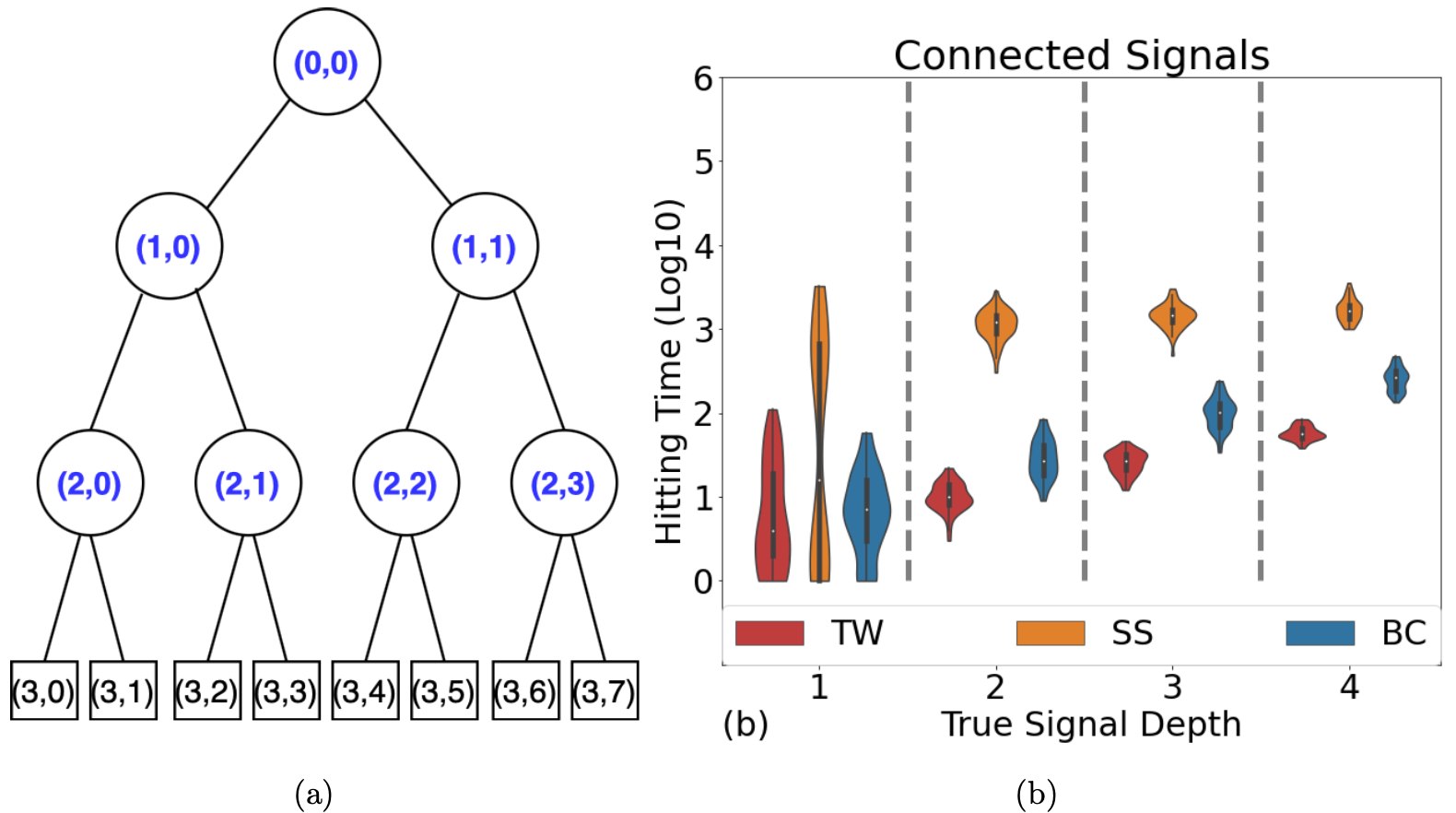}
\caption{ (a) An example of the minimal spanning tree $\mT^*$ from Example \ref{ex:caution2} with oval internal and square external nodes. (b) The hitting time of 50 chains initialized at $(0,0)$. TW: Twiggy Bayesian CART, SS: Spike-and-Slab, BC: Bayesian CART.}\label{fig:motiv2}
\end{figure}
\end{example}

This work is not necessarily aimed at establishing the new methodological gold standard for MH tree proposal distributions. Instead, it is aimed at formalizing computational bottlenecks of Bayesian CART by performing a   theoretical study of  the default approach used in practice.   During our theoretical investigation, however, one natural modification of the classical Bayesian CART resurfaced. 
In  Figure \ref{fig:motiv}(b), we showed a variant of Bayesian CART (called Twiggy Bayesian CART) which had more favorable hitting times. We now describe this new twist on an old classic. Later in Section \ref{sec:lit} we describe another enhancement using locally informed proposals.

\subsection{Twiggy Bayesian CART}\label{sec:twiggy}

To extend the reachability of trees in situations such as Example \ref{ex:caution}, we modify the   GROW and PRUNE proposals. 
The GROW proposal attaches a twig to a chosen terminal node (rather than just splitting it into two nodes). 
The reverse move is then removing an entire branch (twig) in a tree rather than just collapsing two sibling bottom nodes. We call this variant  Twiggy Bayesian CART.
{A twig is a portion of a tree that has at most one internal node for each level (as formalized below).

\begin{definition}(Twig) 
For two distinct nodes $(l,k)$ and $(l',k')$  where $(l,k)$ is an ancestor of $(l',k')$ and  $l\leq l'< L$,  we define a twig  $[(l,k)\leftrightarrow (l',k')]=\{(l,k),...(l'-1,\lfloor k'/2\rfloor),(l',k')\}$ 
as the collection of nodes on the unique shortest path connecting  $(l,k)$ and $(l',k')$ in a full tree $\mT_{full}^L$. A twig of length one is simply
$[(l,k)\leftrightarrow (l,k)]=\{(l,k)\}$. 
\end{definition}

\begin{definition}(Ancestors and Descendants) 
Given $\mT\in\bT_L$ and an internal node $(l,k)\in\mT_{int}$, we define ancestors of $(l,k)$ inside $\mT$ as 
$A_{lk}(\mT)=\{(l',k')\in\mT_{int}:\exists j\in \{0,1,\dots, L-1\}\,\,s.t.\,\, (l',k')=(l-j,\lfloor k/2^j\rfloor) \}$. Descendants of $(l,k)$ inside $\mT$ are defined as
$D_{lk}(\mT)=\{(l',k')\in\mT_{int}: (l,k)\in A_{l'k'}(\mT)\}$.
\end{definition}

Given the current state $\mT^i$, the GROW proposal of Twiggy Bayesian CART  picks a node $(l^*,k^*)$ in $\mT_{int}^{L\,full}\backslash {\mT_{int}^i}$
and grows a twig from an external  node $(\wt l,\wt k)\in\mT^i_{ext}$ that is closest  to $(l^*,k^*)$.
In particular,  we have 
$$
{\wt\mT}_{int}=\mT_{int}^i\cup [(\wt l,\wt k)\leftrightarrow (l^*,k^*)]. 
$$

If we considered a uniform proposal for $(l^*,k^*)$, we would often pick a node from the deepest allowed layer $L-1$ (because there are $2^{L-1}$ of nodes).
Instead, we penalize the inclusion of deep nodes by  first picking a layer $l^*$ from eligible layers $E^i=\{l{  <L}: \exists (l,k)\notin \mT^i_{int}\}$ with probabilities  $d_{l^*}=D^{-l^*}/\sum_{l\in E_l} D^{-l}$ for $D>1$ and by considering a uniform proposal within the chosen layer, i.e.  
\begin{equation}\label{eq:reweight_tw}
 \left(\frac{D-1}{D^{L}-1}\right)\frac{1}{2^{L-1}}\leq S_{GROW}(\mT^i\rightarrow\wt\mT)= \frac{d_{l^*}}{ |\mathcal{K}_{l^*}|} ,
\end{equation}
where $\mathcal{K}_{l^*}=\{k: (l^*,k)\notin \mT^i_{int}\}$. Note that larger $D$ provides a stronger penalty that prevents the proposal from stretching towards nodes that are too deep.


\begin{figure}
    \centering
    \includegraphics[width=.8\linewidth]{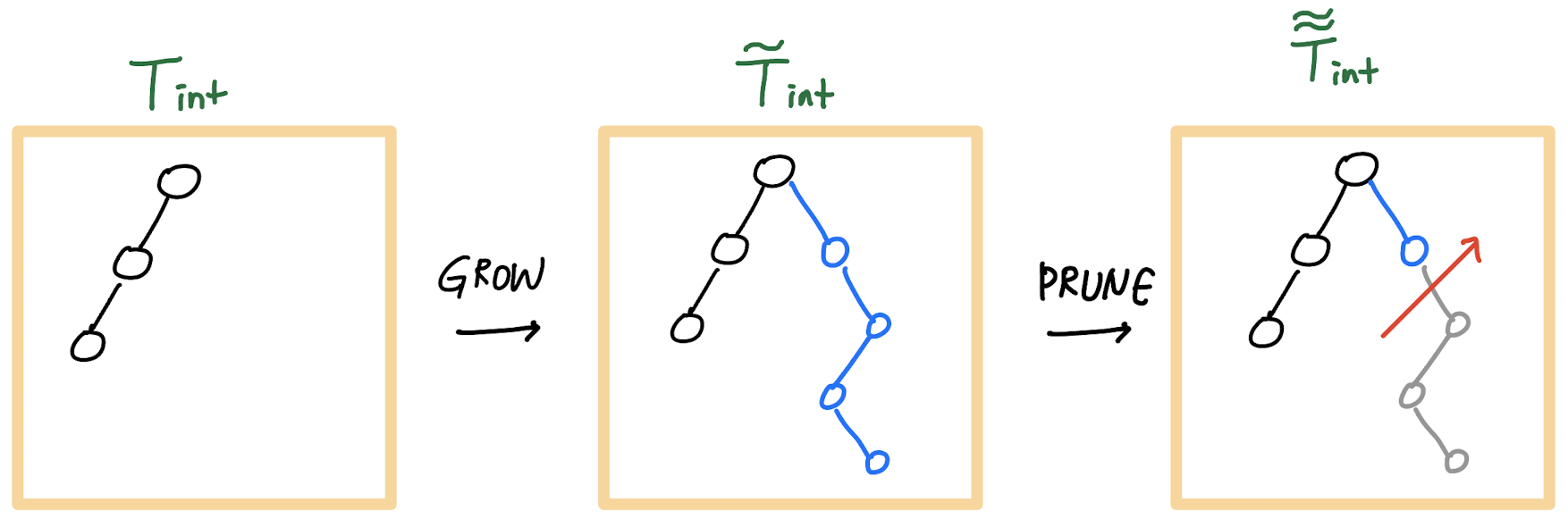}
    \caption{The Twiggy GROW and PRUNE.}
    \label{fig:twigg_example}
\end{figure}

The PRUNE step uniformly picks an internal node $(\wt l,\wt k)\in\mT_{int}^i$ such that its entire branch below is a twig, i.e. 
$D_{lk}=[(\wt l, \wt k) \leftrightarrow (l^*,k^*)]$ for some $(l^*,k^*)\in\mT_{int}^i$. The entire branch below the node is then removed to obtain $\wt\mT$. In particular,  we have 
$$
{\wt\mT}_{int}=\mT_{int}^i\backslash [(\wt l,\wt k)\leftrightarrow (l^*,k^*)] 
$$
 Since the proposal candidates are {all the nodes that have a twig below (including all per-terminal nodes $\mP(\mT^i)$}), the proposal probability is bounded by
\begin{equation}\label{eq:tw_prune}
\frac{1}{|\mT^i_{int}|} \leq S_{PRUNE}(\mT^i\rightarrow\wt\mT)\leq \frac{1}{|\mP(\mT^i)|}.
\end{equation}
{A cartoon of the twig proposals 
is depicted in Figure \ref{fig:twigg_example}.} It can be easily verified that the Twiggy Bayesian CART also yields an irreducible Markov Chain (i.e. all states communicate (see Section 6.3.1 in \citep{robert1999monte}). However, due to denser connectivity among trees we expect fewer bottlenecks.

\subsection {Locally Informed Bayesian CART}\label{sec:lit}
The proposal distribution $S(\cdot \rightarrow \cdot)$  for Bayesian CART and Twiggy Bayesian CART
 ignores posterior information which might be useful in guiding the chain towards high-posterior zones. 
To accelerate  MCMC over general discrete state spaces, \cite{zanella2020informed} proposed \emph{locally informed} proposal schemes  that leverage posterior information in the vicinity of the current state $\mT^i$ to propose the next state $\wt\mT$. In particular, the proposal assigns a weight to each neighboring state $\mT$ that depends on the posterior ratio $\Pi(\mT\C Y)/\Pi(\mT^i\C Y)$. Intuitively, we may expect that a large-posterior candidate is more likely to be accepted. Interestingly, \cite{zhou2021dimension} point  out that this expectation is not always met  and, as a remedy,    threshold  the posterior ratio in the proposal probability calculation. This approach is called LIT-MH (Metropolis-Hastings with Locally Informed and Thresholded proposal distributions). In the context of Bayesian variable selection, \cite{zhou2021dimension} show that LIT-MH significantly improves the mixing rate. Inspired by this finding, we also consider LIT-MH variants for Bayesian CART and Twiggy Bayesian CART and, later in Section \ref{sec:patul_mix_result}, show that their mixing rate is linear in  problem parameters.

Denote by $\mN_g(\mT^i)=\{\mT'\supset \mT^i: |\mT'_{int}\backslash\mT^i_{int}|=1\}$ and $\mN_p(\mT^i)=\{\mT'\subset \mT^i:|\mT^i_{int}\backslash\mT'_{int}|=1\}$ the GROW and PRUNE candidates from the current state $\mT^i$ of the Bayesian CART algorithm. For these neighbor candidate trees, we define an intelligent movement rule instead of just a random walk. The proposal distribution for the LIT-MH proposal for  Bayesian CART consists of
\begin{align}
S_{GROW}(\mT^i\rightarrow \wt\mT)=\frac{w_g(\wt\mT\C\mT^i)}{Z_g(\mT^i)}\mathbb{I}_{\mN_g(\mT^i)}(\wt\mT), \nonumber \\S_{PRUNE}(\mT^i\rightarrow \wt\mT)=\frac{w_p(\wt\mT\C\mT^i)}{Z_p(\mT^i)}\mathbb{I}_{\mN_p(\mT^i)}(\wt\mT),\label{eq:K_def}\end{align}
 where the weighting functions are defined for suitable $A>0$ and $c>3/2$ as
\begin{align}
w_g(\wt\mT\C\mT)&=\frac{\Pi(\wt\mT\C Y)}{\Pi(\mT\C Y)}\wedge n^{(A^2\log n )/8}\quad\text{and}\quad w_p(\wt\mT\C\mT)&=1\vee\frac{\Pi(\wt\mT\C Y)}{\Pi(\mT\C Y)}\wedge n^{c-3/2}\label{eq:wp}, 
\end{align}
and the corresponding normalizing constants are 
\begin{align*}
Z_g(\mT)=\sum_{\wt\mT\in\mN_g(\mT)}w_g(\wt\mT\C\mT)\quad\text{and}\quad
Z_p(\mT)=\sum_{\wt\mT\in\mN_p(\mT)}w_p(\wt\mT\C\mT).
\end{align*}
We call by \emph{Informed} (Twiggy) Bayesian CART the variant with proposal probabilities in \eqref{eq:K_def}. The Informed Twiggy Bayesian CART has more neighbors $\mN_g(\mT)$ and $\mN_p(\mT)$ compared to the Informed Bayesian CART.



\section{On Mixing Rates of Markov Chains}\label{sec:mixing}
This section revisits several known facts about Markov chains whose states are combinatorial structures.
In our work, bounds on mixing rates will be obtained by inspecting the eigenspectrum of the transition matrix. We denote with $ P$ the transition matrix on the state space $\bT_{L}$ whose entries  $ P(\mT_i,\mT_j)$ quantify the probability of the move $\mT_i\rightarrow\mT_j$.
For a given $\mT\in\bT_{L}$, we denote with $\mN(\mT)=\{{\mT'}: S(\mT\rightarrow{\mT'})\neq 0 \}$ the {\em neighborhood} of $\mT$ consisting of all trees ${\mT'}$ which can reach  $\mT$ in one step.  Under the MH algorithm, we can write
$$
 P(\mT,{\mT'})=
\begin{cases}
S(\mT\rightarrow {\mT'})\alpha(\mT,{\mT'}) &\quad \text{if}\quad {\mT'}\in\mN(\mT)\\
0&\quad\text{if}\quad {\mT'}\notin\mN(\mT)\cup\{\mT\},\\
1-\sum_{\wt \mT\neq\mT} P(\mT,\wt\mT)&\quad\text{if}\quad {\mT'}=\mT.
\end{cases}
$$
where $\alpha(\cdot,\cdot)$ is the MH acceptance probability and $S(\cdot\rightarrow\cdot)$ is the proposal probability in \eqref{eq:proposal}.
Moreover, the chain is reversible with respect to the probability distribution $\Pi(\mT\C Y)$ as it satisfies the detailed balance condition
\begin{equation*}
Q(\mT,{\mT'})\equiv \Pi(\mT\C Y) P(\mT,{\mT'})=\Pi({\mT'}\C Y) P({\mT'},\mT)\quad\text{for all}\quad \mT,{\mT'}\in\bT_{L}.
\end{equation*}

This condition ensures that $\Pi(\cdot\C Y)$ is the stationary distribution for $ P$.
It will be useful to associate the Markov chain with a weighted undirected graph on the vertex set $\bT_{L}$ where the weight between two connecting (neighboring) vertices 
$\mT$ and ${\mT'}$ equals $Q(\mT,{\mT'})$.
We denote such a weighted undirected graph by  $G$. Recall that  two vertices $\mT$ and ${\mT'}$ are connected if and only if 
$Q(\mT,{\mT'})>0.$ For an initial state $\mT$ of the Markov chain at time $t=0$, the total variation distance to the stationary distribution after $t$ iterations satisfies
\begin{equation}\label{eq:delta}
\Delta_\mT(t)=\|P^t(\mT,\cdot)-\Pi[\cdot\C Y]\|_{TV}\equiv\max_{S\subset\bT_{L}}|P^t(\mT,S)-\Pi[S\C Y]|,
\end{equation}
where $P^t(\mT,S)\equiv \sum_{{\mT'}\in S}P^t(\mT,{\mT'})$ and where $P^t(\mT,\cdot)$ denotes the distribution of the state at time $t$ with an initial condition $\mT$.
We now recall  the formal definition of a mixing time.

\begin{definition}\label{eq:mixing_time}
The $\epsilon$-mixing time of the Markov chain  is defined as 
\begin{equation}\label{eq:tau_def}
\tau_{\epsilon}\equiv\max_{\mT\in \bT_{L}}\min\{t\in\N: \Delta_\mT(t')\leq \epsilon\quad\text{for all}\quad t'\geq t\},
\end{equation}
where $ \Delta_\mT(t)$ is as in \eqref{eq:delta}.
\end{definition}
For an ergodic chain (whose states are aperiodic and positively recurrent),  the rate of convergence to $\Pi(\cdot\C Y)$ is governed by the spectral gap of $P$.
Defining $\lambda_{max}=\max\{\lambda_1,|\lambda_{|\bT_{L}|-1}|\}$, the spectral gap is defined as $Gap( P)=1-\lambda_{max}$.
The following sandwich relation shows that the mixing time $\tau_{\epsilon}$ and the spectral gap are related (\citep{sinclair1992improved}, equation 2.9 in \citep{woodard2013convergence})
\begin{equation}\label{eq:sandwich}
\frac{1-Gap(P)}{2\times Gap(P)}\log\left[\frac{1}{2\epsilon}\right]\leq \tau_{\epsilon}\leq\frac{\log[1/\min_{\mT\in\bT_{L}}\Pi(\mT\C Y)]+\log 1/\epsilon}{Gap(P)}.
\end{equation}

For our theoretical study, we will work with a modified transition matrix (as suggested in \citep{sinclair1992improved}) which adds self-loops of weight $1/2$ to each state.
This so called ``lazy" Markov chain   does not significantly affect the mixing times.
We denote with $\wt P$ the transition matrix of the original sampler and with $P\equiv \wt P/2+I/2$ the modified matrix. This modification ensures that all eigenvalues are non-negative where the spectral gap satisfies $Gap(P)=1-\lambda_1$. Beyond the connection  in \eqref{eq:sandwich},  the second eigenvalue $\lambda_1$ (or the spectral gap)  controls the information flow through the graph or, in other words,  the {\em conductance} of the Markov chain. 

\subsection{Canonical Paths and Conductance}
Some of the earliest spectral gap lower bounds were based on the concept of conductance \citep{jerrum1988conductance}. In particular, Theorem 2 in \citep{sinclair1992improved}  shows that in a reversible Markov chain
$$
\Phi^2/2\leq Gap(P)\leq 2\Phi,\quad\text{where}\quad\Phi=\min\limits_{\substack{A\subset\bT \\ 0<\Pi[A\C Y]\leq 1/2}}\frac{\sum_{\mT\in A,{\mT'}\in \bT\backslash A}\Pi(\mT\C Y)P(\mT,{\mT'})}{\Pi[A\C Y]}
$$
is the conductance which measures the ability of the chain to escape from any small region of the state space (and make rapid progress to equilibrium). 
The idea behind conductance is that chains with fewer bottlenecks will mix faster.  While conductance can sometimes be estimated directly, in many applications the better approach to upper-bound the spectral gap is with edge overload on {\em canonical paths} \citep{sinclair1992improved}.
\begin{definition}(Canonical Path Ensemble)
For any distinct  pair of trees $\mT,{\mT'}\in\bT_{L}$ we denote with $T_{\mT,{\mT'}}$ a simple path running from $\mT$ to ${\mT'}$ through adjacent states in the state space graph $G$.
A canonical path ensemble  $\mE=\{T_{\mT,{\mT'}}:(\mT,{\mT'})\in\bT_{L}\times\bT_{L}\}$ is then a collection of such simple paths, one for each (ordered) pair of distinct vertices in $G$.
\end{definition}
For any reversible Markov chain and any choice of a canonical path ensemble $\mE$, the spectral gap of $P$ can be lower-bounded with  (Corollary 6 of \citep{sinclair1992improved})
\begin{equation}\label{eq:gap_lb}
Gap(P)\geq \frac{1}{l(\mE)\rho(\mE)},
\end{equation}
where $l(\mE)$ is the length of the longest path in $\mE$ and
\begin{equation}\label{eq:congestion}
\rho(\mE)=\max_{e\in\mE}\frac{1}{Q(e)}\sum_{(\mT,{\mT'}):e\in T_{\mT,{\mT'}}}\Pi(\mT\C Y)\Pi({\mT'}\C Y)
\end{equation}
is the path congestion parameter. For the edge $e$ in between two adjacent states $\mT$ and ${\mT'}$, the quantity 
$Q(e)\equiv Q(\mT,{\mT'})=\Pi(\mT\C Y)P(\mT,{\mT'})$
 measures the natural capacity of the edge $e$ or, in other words, how much traffic it would normally experience in the stationary state. The sum  in \eqref{eq:congestion} then counts the flow of the edge in the given family of canonical paths. The congestion is the maximum load of any edge of the state space graph as a fraction of its capacity.
In order to find an upper bound on the mixing time using \eqref{eq:sandwich}, in Section \ref{sec:canonical} we construct a canonical path ensemble and find a lower bound on the conductance \eqref{eq:congestion}.

\section{Mixing Rates for Bayesian CART}\label{sec:Mixing_Rate}
This section presents some positive as well as negative findings for Bayesian CART in the context of Assumption \ref{ass:f_classic}.
The signal assumptions (a) and (b) are qualitatively rather different and we will be able to appreciate the importance of less myopic proposals in the structure-less signal (b). Without the tree skeleton, local moves of Bayesian CART may not be able to reach all signals.

\subsection{Bayesian CART Can Mix Poorly}\label{sec:poor}

We continue our cautionary tale from Example \ref{ex:caution} showing that isolated signals are out of reach for initializations which need grow through noise to catch them.  We now characterize the inability of the Markov chain to reach 
the posterior distribution reasonably (polynomially) fast. By finding an upper bound for the spectral gap in a
counterexample $f_0$ constructed according to the Example \ref{ex:caution}}, we  show that the mixing lower bound increases exponentially in $n$.

\begin{theorem}\label{theo:poor} Assume the model \eqref{model} with the Bayesian CART prior from Section \ref{section:bart_prior_only} with $L\geq 2$ and $p_{lk}=n^{-c}$ with $ c> 5/2$. There exists $f_0$ that satisfies Assumption \ref{ass:f_classic} (b) such that, with probability at least $1-4/n$, the Bayesian CART mixing time  satisfies for some $C>1$
\[
\tau_{\epsilon}> \log\left(\frac{1}{2\epsilon}\right)\frac{1}{4}\left[\left(\frac{n^{(c-3/2)}/4-1}{C}\right)^{L-2} -3\right].
\]
{This bound is exponential in $n$ when $L=L_{max}\sim \log (n/2)$.}
\end{theorem}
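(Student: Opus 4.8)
The plan is to exhibit one narrow cut in the state-space graph $G$ and combine the Cheeger-type upper bound $Gap(P)\le 2\Phi$ from Section \ref{sec:mixing} with the left half of the sandwich inequality \eqref{eq:sandwich}: an exponentially small conductance $\Phi$ forces $\tau_\epsilon$ to be exponentially large. I would take $f_0$ as in Example \ref{ex:caution} with a single signal node placed as deep as the prior allows, $\mathcal B=\{(L-1,0)\}$ and $\beta^*_{L-1,0}=2$. This obeys Assumption \ref{ass:f_classic}(b) because $\{(L-1,0)\}$ is not itself a tree, and the minimal spanning tree is the ``comb'' $\mT^*$ with $\mT^*_{int}=\{(0,0),(1,0),\dots,(L-1,0)\}$. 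For the cut I take $A=\{\mT\in\bT_L:(L-1,0)\notin\mT_{int}\}$, the trees that have not yet captured the signal. By Theorem \ref{lemma:consist} the posterior concentrates on $\mT^*\in A^c$ with probability at least $1-4/n$, so $\Pi[A\C Y]\le 1/2$ and $A$ is admissible in the definition of $\Phi$; moreover the dominant contributor to $\Pi[A\C Y]$ is the null tree $\mT_{null}$.

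The heart of the argument is bounding the numerator $N=\sum_{\mT\in A,\,\mT'\in A^c}\Pi(\mT\C Y)P(\mT,\mT')$. Since PRUNE never adds a node, and the signal lives entirely in the single orthonormal coefficient $\beta_{L-1,0}$ (the ancestor wavelets $\psi_{l,0}$, $l<L-1$, are orthogonal to $\psi_{L-1,0}$ and cannot help fit it), the only edges leaving $A$ are GROW moves that attach $(L-1,0)$, and such a move is available only from a ``gateway'' tree $\mathcal M$ whose internal set contains the entire chain $(0,0),\dots,(L-2,0)$ with $(L-1,0)$ external. Every gateway $\mathcal M$ fits $L-1$ layers of pure noise without explaining any of the signal, so inspecting $N_Y(\cdot)$ in \eqref{eq:compare_post} shows $N_Y(\mathcal M)$ to be smaller than $N_Y(\mT^*)$ by a factor of order $e^{-2n}$ coming from the unexplained signal, while $P(\mathcal M,\mathcal M\cup\{(L-1,0)\})\le \frac{1}{2|\mathcal M_{ext}|}$. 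One then shows, on the high-probability noise event, that the total posterior mass of the many gateway trees is dominated up to a constant by that of the minimal gateway $\mathcal M_0$ with $\mathcal M_{0,int}=\{(0,0),\dots,(L-2,0)\}$, so $N\lesssim \Pi(\mathcal M_0\C Y)$.

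It remains to compare $\Pi(\mathcal M_0\C Y)$ with the denominator, for which $\Pi[A\C Y]\ge\Pi(\mT_{null}\C Y)$ suffices; here the product $(\,\cdot\,)^{L-2}$ appears. The tree $\mathcal M_0$ differs from $\mT_{null}$ by the $L-1$ internal noise nodes $(0,0),\dots,(L-2,0)$, and each such node costs a prior factor $p_{lk}=n^{-c}$ and buys at most an $n^{3/2}$-fold likelihood improvement in \eqref{eq:compare_post} --- the latter controlled on the event (of probability at least $1-4/n$, the same concentration used in Theorem \ref{lemma:consist}) that the empirical wavelet coefficients of the noise are uniformly $O(\sqrt{\log n})$ after the scaling in \eqref{design_mat}. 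Hence $\Pi(\mathcal M_0\C Y)/\Pi(\mT_{null}\C Y)\le (C\,n^{-(c-3/2)})^{L-2}$ for an absolute constant $C$, so $\Phi\le N/\Pi[A\C Y]\lesssim (C\,n^{-(c-3/2)})^{L-2}$ up to the bounded transition probabilities, $Gap(P)\le 2\Phi$, and plugging into $\tau_\epsilon\ge\frac{1-Gap(P)}{2\,Gap(P)}\log\frac{1}{2\epsilon}$ gives the stated bound; the precise constants (the $/4$ inside, the $-1$, and the $-3$) come from tracking the GROW/PRUNE probabilities and acceptance ratios at each of the $L-2$ intermediate layers. Taking $L=L_{max}\sim\log(n/2)$ makes the exponent grow with $\log n$, so the bound outstrips every polynomial in $n$.

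The step I expect to be the main obstacle is pinning the per-layer cost at the order $n^{-(c-3/2)}$ (and not something larger): this needs a uniform upper bound on the best-case likelihood gain from fitting one extra noise wavelet, simultaneously over all $O(n)$ candidate nodes and all gateway trees --- a careful ratio analysis of $N_Y(\cdot)$ under the $\sqrt{\log n}$ concentration event --- together with the claim that the numerator does not accumulate across the many gateway trees, i.e. that $\Pi(\cdot\C Y)$ restricted to gateways is genuinely dominated by the minimal comb $\mathcal M_0$.
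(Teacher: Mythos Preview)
Your approach is correct and follows the same high-level scheme as the paper --- bottleneck set, Cheeger bound $Gap(P)\le 2\Phi$, then the left half of \eqref{eq:sandwich} --- but the ratio $\Phi$ is controlled by a different bookkeeping. The paper shifts the cut down by one layer, taking $A_{\backslash(L-2,0)}=\{\mT:(L-2,0)\notin\mT_{int}\}$ (the \emph{parent} of the signal missing), decomposes it into shells $B_i=\{\mT:(i-1,0)\in\mT_{int},\,(i,0)\notin\mT_{int}\}$ for $i=0,\dots,L-2$, and proves the recursive estimate $\Pi[B_i\mid Y]\lesssim\gamma_n\,\Pi[B_{i-1}\mid Y]$ with $\gamma_n=C/(n^{c-3/2}/4-1)$ via a many-to-one map $B_i\to B_{i-1}$ (delete all descendants of $(i-1,0)$) and a Catalan-number sum; a geometric series then bounds $\Pi[B_{L-2}\mid Y]/\Pi[A\mid Y]$. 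Your route is more direct: collapse the entire gateway mass onto the minimal comb $\mathcal M_0$ with a single overfitting sum, then compare $\mathcal M_0$ to $\mT_{null}$ node-by-node in one shot. Both deliver the stated exponent (your cut actually yields $L-1$ factors, which you may weaken to $L-2$); the paper's layered recursion makes explicit that each intermediate noise layer costs a factor $\gamma_n$, while your argument is shorter. Two small corrections: the aside comparing $N_Y(\mathcal M)$ to $N_Y(\mT^*)$ and the $e^{-2n}$ factor is not used anywhere --- the comparison you actually need is $\mathcal M_0$ against $\mT_{null}$, where the unexplained-signal term cancels; and the $/4$ in the displayed bound comes from the Catalan count $\mathbb C_K\asymp 4^K/K^{3/2}$ controlling the number of overfitting extensions in the gateway sum, not from GROW/PRUNE proposal probabilities.
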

\proof {Section \ref{pf:poor_lower}}. 


\begin{remark}
In work independent  from ours, \citep{ronen2022mixing}  provided a lower bound result showing that a simplified version of BART \citep{chipman2010bart} mixes poorly (at least exponentially in $n$). In particular, \citep{ronen2022mixing} considered a single tree with prune and grow movements in  a multi-dimensional setting.  The key idea behind the lower bound is that the first split direction causes a serious bottleneck. To move between two trees that differ in their first split direction, one must prune all the way up to the root tree to replace the first split. We consider a perhaps more simplified scenario by exploiting wavelet representations and we show slow mixing even in a one-dimensional setting.
\end{remark}

\subsection{Bayesian CART Can Mix Well}\label{sec:BCART_ass1}
We now establish sufficient conditions for classical Bayesian CART to mix ``well", i.e the number of iterations required to converge to an $\epsilon$-ball of the stationary distribution grows only {\em polynomially} in the problem parameters. We will inspect various components of the sandwich relation presented earlier in \eqref{eq:sandwich}.

The following theorem provides a polynomial upper bound for the speed of MCMC convergence of classical Bayesian CART for connected signals (Assumption \ref{ass:f_classic} (a)).

\begin{theorem}\label{eq:theo1}
Assume the model \eqref{model} with the Bayesian CART prior with $p_{lk}=n^{-c}$ with $  c>5/2$.  Under  Assumption \ref{ass:f_classic} (a)  with a large enough constant $A>0$,
with probability at least $1-4/n$ the Bayesian CART algorithm from Section \ref{section:sampler} satisfies 
\begin{equation}\label{eq:tau_bound1}
\tau_\epsilon\leq 2^{2L+3} \left\{ n\left[\left(c+\frac{1}{2}\right)\log (1+n) +  |\mT^*_{int}|C_{f_0}^2+1\right]   + 4\,|\mT^*_{int}|\log n+ \log \left(\frac{2}{\epsilon}\right) \right\}
\end{equation}
where $C_{f_0}>0$ is the constant from Assumption \ref{ass:f_classic}.  
\end{theorem}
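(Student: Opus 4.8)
The plan is to combine the upper bound in the sandwich relation \eqref{eq:sandwich} with the canonical-path congestion bound \eqref{eq:gap_lb}. We need (i) a lower bound on $\min_{\mT}\Pi(\mT\C Y)$ so that $\log[1/\min_{\mT}\Pi(\mT\C Y)]$ contributes the $n[(c+1/2)\log(1+n)+\dots]$ term, and (ii) a lower bound on $Gap(P)$ of the form $1/(l(\mE)\rho(\mE))$ where $l(\mE)\le 2L$ and $\rho(\mE)\lesssim 2^{2L}$ (absorbing also the $|\mT^*_{int}|C_{f_0}^2$ and $\log n$ factors). First I would establish (i): for any tree $\mT\in\bT_L$, $\Pi(\mT\C Y)\ge\Pi(\mT)N_Y(\mT)/\big(\sum_{\mT'}\Pi(\mT')N_Y(\mT')\big)$, and since the denominator is at most a crude constant times $N_Y(\mT^*)$ (using Theorem~\ref{lemma:consist}-type control) while the numerator is at least $p_{lk}^{|\mT_{int}|}(1-p_{lk})^{|\mT_{ext}|}\ge n^{-c\,2^L}$ times $N_Y(\mT)$, and $N_Y(\mT)\ge (2\pi)^{-n/2}(1+n)^{-n/4}\exp\{-\tfrac12 \|Y\|^2\}$ since $|\mT_{ext}|\le 2^L\le n/2$ and the quadratic form is nonnegative, while $\|Y\|^2\lesssim n(1+|\mT^*_{int}|C_{f_0}^2)$ on the high-probability event. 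Taking logs gives exactly the bracketed quantity multiplied by $n$ (up to the constants shown), plus $4|\mT^*_{int}|\log n$ from the $p_{lk}$ factors on $\mT^*$.

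Next I would construct the canonical path ensemble $\mE$. For an ordered pair $(\mT,\mT')$, the natural route — inspired by CART — is to go $\mT\rightsquigarrow \mT\cup\mT^*\rightsquigarrow\mT'$, i.e. first GROW from $\mT$ toward a canonical ``hub'' tree containing $\mT^*_{int}$ by adding the missing internal nodes of $\mT^*$ in a fixed breadth-first order (legal because under Assumption (a) $\mT^*_{int}$ is itself a tree, so each intermediate set is a valid tree reachable by a single GROW), then PRUNE down the spurious nodes of $\mT$ not in $\mT^*$, then reverse the analogous construction for $\mT'$. Each such path has length at most $4\cdot 2^L$-ish but more carefully $l(\mE)\le 2L\cdot(\text{const})$ after noting depth constraints; the bound $2^{2L+3}$ in the statement suggests they take $l(\mE)$ and the per-edge path count both controlled by $|\bT_L|\le 2^{2^L}$-free counting, and in fact I expect $l(\mE)\le 2\cdot 2^{L+1}$ and $\rho(\mE)\le 2^{2L+2}$. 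To bound $\rho(\mE)$ I would, for a fixed edge $e=(\mathcal U,\mathcal V)$, inject each pair $(\mT,\mT')$ routing through $e$ into a pair of ``encoding'' trees and use that $Q(e)=\Pi(\mathcal U\C Y)P(\mathcal U,\mathcal V)$ is not too small relative to $\Pi(\mT\C Y)\Pi(\mT'\C Y)$: the key acceptance-ratio estimate is that along the grow-towards-$\mT^*$ portion the posterior is non-decreasing up to a factor $n^{-(c-1/2)}$ per added $\mT^*$-node and the MH ratio along prune-spurious steps is $\ge$ const (pruning a noise node increases the posterior), and the proposal probabilities $S(\cdot\to\cdot)\ge 1/(2\cdot 2^{L})$ are bounded below since $|\mT_{ext}|,|\mP(\mT)|\le 2^L$. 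Summing the injected pairs' posterior masses telescopes to at most $1$, leaving the stated powers of $2^L$.

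The main obstacle is the congestion bound $\rho(\mE)$: one must show no single edge is overloaded despite the $g$-prior normalizing constants $N_Y(\mT)$ fluctuating. The crux is a uniform lower bound on the acceptance probability along every canonical-path edge, i.e. controlling $N_Y(\mathcal V)/N_Y(\mathcal U)$ when $\mathcal V$ is obtained from $\mathcal U$ by adding a signal node of $\mT^*$ (should be $\gtrsim n^{-(c-1/2)}$ by the signal gap $|\beta^*_{lk}|>A\log n/\sqrt n$ and Theorem~\ref{lemma:consist}'s event) or by pruning a non-signal node (should be $\gtrsim 1$ since removing noise does not hurt the marginal likelihood, exactly the estimate behind Theorem~\ref{lemma:consist}). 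These Bayes-factor bounds, which I would lift verbatim from the proof of Theorem~\ref{lemma:consist} in Section~\ref{sec:proof_lemma_consist}, are what let the telescoping sum of posterior products over pairs through $e$ collapse; everything else is bookkeeping of the $2^L$ factors. I would finish by plugging the resulting $Gap(P)\ge 2^{-(2L+2)}$-type bound and the $\log[1/\min\Pi]$ estimate into \eqref{eq:sandwich}, collecting constants into the displayed $2^{2L+3}\{\dots\}$ form.
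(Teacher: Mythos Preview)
Your overall plan --- sandwich relation \eqref{eq:sandwich} plus canonical-path congestion \eqref{eq:gap_lb} --- matches the paper, but there is a real gap in the congestion estimate. You write $l(\mE)\le 2\cdot 2^{L+1}$ and $\rho(\mE)\le 2^{2L+2}$, whose product is $2^{3L+4}$, not the stated $2^{2L+3}$. The paper gets $l(\mE)\le 2^{L+1}$ (Lemma~\ref{lem:classic_mE}) and, crucially, $\rho(\mE)\le 2^{L+1}(1+o(1))$ (Lemma~\ref{lemma:conduct}): only \emph{one} factor of $2^L$ in the congestion. The Yang-et-al.\ encoding/telescoping device you sketch is precisely what costs the extra factor of $n\asymp 2^{L+1}$, as the paper itself flags in its comparison with Spike-and-Slab (Section~\ref{sec:comp_yang}). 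The missing idea is a \emph{precedent-set} bound: for a canonical edge $e=(\mT,\mG(\mT))$, every pair routed through $e$ has first endpoint in $\Delta(\mT)=\{\mT':\mT\in T_{\mT',\mT^*}\}$, so $\rho(\mE)\le 2\max_e \Pi[\Delta(\mT)\C Y]/\{\Pi(\mT\C Y)S(\mT\to\mG(\mT))\}$ times a factor $\max[1,\cdot]$ that equals $1+o(1)$ because every canonical step is posterior-uphill. The single $2^L$ comes from $1/S\le 2\cdot 2^L$; the crux is $\Pi[\Delta(\mT)\C Y]/\Pi(\mT\C Y)\le 1+o(1)$, obtained by stratifying $\Delta(\mT)$ into shells $K$ steps further from $\mT^*$ and summing (shell cardinality)$\times$(per-step posterior decay), the latter being $\le n^{-(c-3/2)}$ for each extra noise node and $\le n^{-A^2\log n/8}$ for each missing signal node. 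Your Bayes-factor inequalities from Theorem~\ref{lemma:consist} are the right ingredients, but they feed this shell sum, not an endpoint telescoping.

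Two smaller remarks. The paper orders the canonical path \emph{prune noise first, then grow signal} (so it passes through a subtree of $\mT^*$, never through $\mT\cup\mT^*$); this makes the precedent structure clean, since precedents of an overfitted $\mT$ are strictly larger overfitted trees and precedents of an underfitted $\mT\subset\mT^*$ split into the two classes handled in Section~\ref{sec:canonical_underfit}. And your phrase ``non-decreasing up to a factor $n^{-(c-1/2)}$'' along the grow-towards-$\mT^*$ portion has the direction wrong: under Assumption~\ref{ass:f_classic}(a) every added $\mT^*$-node is a signal node, so the posterior ratio is $\gtrsim n^{A^2\log n/8}\gg 1$ and the acceptance probability there is essentially $1$, not merely bounded below by a negative power of $n$.
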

\proof 
See Section \ref{seq:proof_thm_theo1}.
 
\begin{remark}
In the bound \eqref{eq:tau_bound1}, we intentionally separated the influence of model complexity (captured by the maximal allowed depth $L$) and the sample size $n$. 
In practice, the most reasonable choice for $L$ is the maximal allowed resolution $L=L_{max}$ which will give us cubic mixing in $n$.
If we were confident that the posterior over trees deeper than $L$ goes to zero, we can always devise a Markov chain with a smaller state space (trees up to level $L$).
For example, for $\alpha$-H\"{o}lderian function choosing $L\propto (n/\log n)^{1/(2\alpha+1)}$ yields $\mathbb P_{f_0}(\bT_L^c)=o(1)$.

\end{remark}

\begin{remark}{(Comparison with Spike-and-Slab)}
 The tree-structured signal in Assumption \ref{ass:f_classic} (a) is particularly flattering for Bayesian CART. It is interesting to compare this approach with a spike-and-slab prior and a one-site Metropolis-Hastings proposal.
 \citep{yang2016computational} showed rapid mixing of the MH algorithm in a high-dimensional linear model (i.e. \eqref{eq:main_model} with $p$ covariates and $\bm \nu=\bm\varepsilon$) and  a $g$-prior (which coincides with our prior in orthogonal designs). We attempt to rephrase their result in the context of  our wavelet regression matrix where $p=n/2$.
The point-mass spike-and-slab prior in  \citep{yang2016computational} assumes $\Pi(\bm\gamma)\propto \Big(\frac{1}{p}\Big)^{\kappa|\bm\gamma|}\mathbb{I}[|\bm\gamma|\leq s_0]$, where $s_0$ is a chosen upper bound on the model complexity and $\kappa$ is a model-size penalty. We show ({Theorem \ref{theo:yang} in the Supplement Section \ref{sec:comp_yang}}) that our upper bound on Bayesian CART mixing time is tighter by $n$ than the upper bound for Spike-and-Slab MH due to a tighter bound on the congestion parameter. 

 \end{remark}
 

The proof  of Theorem \ref{eq:theo1} rests on the canonical path argument and the sandwich  relation \eqref{eq:sandwich}. Together with \eqref{eq:gap_lb}, this yields
$\tau_{\epsilon}\leq l(\mE)\rho(\mE)\big(\log[1/\min_{\mT\in\bT_{L}}\Pi(\mT\C Y)]+\log 1/\epsilon\big)$. In the next section, we show Lemma \ref{lem:classic_mE} and Lemma \ref{lemma:conduct}  which 
provide an upper bound for the first two terms on the   right side. The logarithmic term is handled by the posterior consistency result in Lemma \ref{lemma:consist}. 
In the next section, we provide details of the canonical path construction and  describe basic properties of our canonical path ensemble.
Similarly as in \citep{yang2016computational}, whose canonical path architecture was inspired by stepwise variable selection, our construction was inspired by the CART algorithm \citep{breiman2017classification}.

\subsubsection{Canonical Path Ensemble for Bayesian CART}\label{sec:canonical}
We denote with  $\mT^*$ the signal-spanning tree from Assumption \ref{ass:f_classic}. First, we construct a canonical path $T_{\mT,\mT^*}$ from any tree $\mT\in\bT_{L}\backslash \{\mT^*\}$ 
towards  $\mT^*$ {\em along edges} in the graph with a transition matrix $P$. To this end, we introduce the {\em transition function} $\mG:\bT_{L}\backslash \mT^*\rightarrow\bT_{L}$ that maps the 
current state $\mT\in\bT_{L}$ onto the next state $\mG(\mT)\in\bT_{L}$ that is {\em ``closer"} to $\mT^*$, where closeness is determined by the Hamming distance $h(\mT,\mT^*)$ between binary tree encodings\footnote{
A binary tree encoding consists of a $(2^L\times 1)$ ordered (according to $2^l+k$) binary vector indicating whether or not $(l,k)\in\mT_{int}$.}.
The canonical path $T_{\mT,\mT^*}=\{\mT^0,\mT^1,\dots,\mT^k\}$ is constructed by composing the transition function so that
$$
\mT^0\equiv\mT\rightarrow \mT^1\equiv\mG(\mT)\rightarrow  \cdots \rightarrow\mT^k\equiv\mG^k(\mT)\equiv\mT^*,
$$
where $\mG^k(\cdot)=\mG\circ\dots\circ\mG(\cdot)$ is a composition of  $\mG$.
Below, we describe one particular transition function $\mG(\mT)$ which reduces the (Hamming) distance after each step, i.e.
$h[\mG(\mT),\mT^*]<h(\mT,\mT^*)\,\,\forall \mT\in \bT_{L}\backslash\mT^*.$
The mapping corresponds to a deterministic version of the PRUNE and GROW steps of  the Bayesian CART algorithm from Section \ref{section:sampler}.
\begin{itemize} 
\item[(1)] Assume $\mT\supset \mT^*$ is {\bf overfitted}, i.e.  $\mT$ forms an envelope around  $\mT^*$ and contains at least one signal-less node. 
 The mapping $\mG(\cdot)$ finds the deepest rightmost redundant node, say $(l,k)\in\mT_{int}\backslash{\mT^*_{int}}$,  and turns it into a bottom node. More formally  $\mG(\mT)=\mT^-$ where
 \begin{equation}
\mT^-_{int}=\mT_{int}\backslash\{(l,k)\}\quad\text{and}\quad \mT^-_{ext}=\mT_{ext}\backslash \{(l+1,2k),(l+1,2k+1)\}\cup\{(l,k)\}\label{eq:Tminus}
 \end{equation}
where  $(l,k)=\arg\max\limits_{(l',k')\in \mT_{int}\backslash\mT^*_{int}} (2^{l'}+k')$. 
 

\item[(2)] Assume $\mT\not \supseteq \mT^*$ is {\bf underfitted}, i.e. $\mT$ misses at least one influential node in $\mT^*$.
\begin{itemize}
\item[(i)] If $\mT\subset \mT^*$, 
the mapping $\mG(\cdot)$ finds the deepest rightmost external node in $\mT_{ext}\backslash\mT^*_{int}$, say $(l,k)$,  and turns it into an internal node. More formally  $\mG(\mT)=\mT^+$ where
 \begin{equation}
\mT^+_{int}=\mT_{int}\cup\{(l,k)\}\quad\text{and}\quad \mT^+_{ext}=\mT_{ext}\cup \{(l+1,2k),(l+1,2k+1)\}\backslash\{(l,k)\}\label{eq:Tplus}
 \end{equation}
where  $(l,k)=\arg\max\limits_{(l',k')\in \mT_{ext}\backslash\mT^*_{int}} (2^{l'}+k')$. 

\item[(ii)] If $\mT\not\subset \mT^*$, the tree $\mT$ contains redundant internal nodes. The mapping $\mG(\cdot)$  again
 finds the deepest rightmost redundant node, say $(l,k)$,  and turns it into a bottom node. We have the same expression for $\mT^-=\mG(\mT)$ as in \eqref{eq:Tminus}.
\end{itemize}
\end{itemize}

\begin{definition}For $\mT'\in\bT_L$ let  $\bar{T}_{\mT,\mT^*}$ denote the reverse of a path $T_{\mT,\mT^*}$. The Bayesian CART canonical path ensemble  is defined as $\mE=\{T_{\mT,{\mT'}}:(\mT,{\mT'})\in\bT_{L}\times\bT_{L}\}$, where
for each canonical path $T_{\mT,{\mT'}}$ is obtained by collapsing the paths $T_{\mT,\mT^*}$ and $  \bar{T}_{{\mT'},\mT^*}$, i.e. {$T_{\mT,{\mT'}}=T_{\mT\backslash \mT'}\cup \bar{T}_{{\mT'}\backslash \mT}$, where $T_{\mT\backslash \mT'} := T_{\mT,\mT^*}\backslash ( T_{\mT,\mT^*}\cap T_{\mT',\mT^*})$\footnote{{ By construction each step in $T_{\mT,\mT^*}$ reduces the Hamming distance, and thus we can show similarly to \cite{yang2016computational} that $\mE$ is an ensemble of \emph{simple} paths.} }.} 

\end{definition}

Below, we characterize important properties of $\mE$ which are instrumental in the sandwich relation   \eqref{eq:sandwich} and in the proof of Theorem \ref{eq:theo1}.  

\begin{lemma}\label{lem:classic_mE}
Let $\mE$ be the canonical path ensemble for Bayesian CART and let $|T_{\mT,{\mT'}}|$ denote the length of the path $T_{\mT,\mT'}\in\mE$ between $\mT,{\mT'}\in\bT_{L}$.  For $\mT^*$ defined   in Assumption \ref{ass:f_classic}, we have
$
\ell(\mE)\equiv\max\limits_{\mT,{\mT'}\in\bT_{L}}|T_{\mT,{\mT'}}| \leq 2^{L+1}.
$
\end{lemma}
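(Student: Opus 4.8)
The plan is to bound the length of each ``half'' path $T_{\mT,\mT^*}$ by the Hamming distance $h(\mT,\mT^*)$ between the binary tree encodings of $\mT$ and $\mT^*$, and then to glue the two halves.

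First I would verify that the transition map $\mG$ moves along edges of $G$ and decreases the Hamming distance to $\mT^*$ by \emph{exactly one} at every application. This is a case check over the three branches in the definition of $\mG$. In each branch only a single coordinate of the encoding is changed — the bit for the node $(l,k)$ that $\mG$ adds or removes — and it is changed from disagreeing with $\mT^*$ to agreeing with it, so $h(\mG(\mT),\mT^*)=h(\mT,\mT^*)-1$. The point needing care is that the move is \emph{legal}: in the GROW branch (2)(i) the node selected by $\mG$ is a leaf of $\mT$ and is internal in $\mT^*$, so splitting it is a valid GROW that stays inside $\bT_L$; in the PRUNE branches (1) and (2)(ii) the node $(l,k)$, chosen as the deepest rightmost element of $\mT_{int}\backslash\mT^*_{int}$, is pre-terminal in $\mT$ — otherwise one of its children would be a strictly deeper element of $\mT_{int}\backslash\mT^*_{int}$, since that child cannot lie in $\mT^*_{int}$, which is closed under taking ancestors — so collapsing $(l,k)$ is a valid PRUNE, and the corresponding edge has positive weight because $\Pi(\cdot\C Y)$ is everywhere positive. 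Iterating $\mG$ thus reaches $\mT^*$ in exactly $h(\mT,\mT^*)$ steps, i.e. $|T_{\mT,\mT^*}|=h(\mT,\mT^*)$.

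Next, since a binary tree encoding of any $\mT\in\bT_L$ is a $(2^L\times 1)$ binary vector, $h(\mT,\mT^*)\le 2^L$, hence $|T_{\mT,\mT^*}|\le 2^L$ for every $\mT$. Finally, by construction $T_{\mT,{\mT'}}$ is the concatenation of $T_{\mT\backslash{\mT'}}\subseteq T_{\mT,\mT^*}$ with $\bar T_{{\mT'}\backslash\mT}\subseteq\bar T_{{\mT'},\mT^*}$ after removing the common suffix of $T_{\mT,\mT^*}$ and $T_{{\mT'},\mT^*}$, so
\[
|T_{\mT,{\mT'}}|\le|T_{\mT,\mT^*}|+|T_{{\mT'},\mT^*}|\le 2^L+2^L=2^{L+1},
\]
and taking the maximum over $\mT,{\mT'}\in\bT_L$ gives $\ell(\mE)\le 2^{L+1}$.

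The only real obstacle is the case analysis in the second paragraph — legality of the proposed GROW/PRUNE moves together with the exact unit decrement of the Hamming distance; everything afterwards is immediate and parallels the canonical-path length computation in \citep{yang2016computational}.
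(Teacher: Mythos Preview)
Your proof is correct and follows essentially the same route as the paper's own argument: bound each half-path $|T_{\mT,\mT^*}|$ by the number of single-node additions/deletions needed to reach $\mT^*$ (at most $2^L$, since the encoding has $2^L$ bits), then concatenate. The paper phrases the $2^L$ bound via an overfitted/underfitted case split counting redundant and missing nodes separately, whereas you package it uniformly through the Hamming distance and add an explicit check that the chosen node in the PRUNE branches is pre-terminal --- a detail the paper leaves implicit.
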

{\proof See Section \ref{sec:lem_proofs1}.}

\smallskip

The following lemma characterizes the behavior of the congestion parameter $\rho(\mE)$ for the canonical ensemble $\mE$ constructed above.
\begin{lemma}\label{lemma:conduct}
Assume the model \eqref{model} with the Bayesian CART prior  with $p_{lk}=n^{-c}$ with $c>1$.  Under  Assumption \ref{ass:f_classic} (a), the canonical path ensemble $\mE$ for the Bayesian CART algorithm from Section \ref{section:sampler} satisfies 
$
\rho(\mE)\leq  2^{L+1}[1+o(1)]\quad\text{with probability at least $1-4/n$}.
$
\end{lemma}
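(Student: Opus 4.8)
The plan is to bound the congestion $\rho(\mE)=\max_e\rho(e)$ one edge at a time. Every edge occurring on a canonical path has the form $e=(\mU,\mG(\mU))$ with $\mU\ne\mT^*$ — a single GROW step attaching a node of $\mathcal B=\mT^*_{int}$ (when $\mU\subsetneq\mT^*$) or a single PRUNE step deleting a node outside $\mathcal B$ (a ``null'' node). The whole computation rests on one structural fact: the Haar columns are mutually orthogonal with $\bm X'\bm X=nI$, so $\Sigma_\mT=\tfrac1{n+1}I$ and, for every tree $\mT$ and every $v\notin\mT_{int}$,
\[
\frac{N_Y(\mT\cup\{v\})}{N_Y(\mT)}=B_v:=\frac{\exp\{(\bm{x}_v'Y)^2/(2(n+1))\}}{(1+n)^{1/2}},\qquad \frac{\Pi(\mT\cup\{v\}\C Y)}{\Pi(\mT\C Y)}\le n^{-c}B_v ,
\]
with $B_v$ independent of $\mT$ and, for a null $v$ (where $\bm{x}_v'Y\sim\mathcal N(0,n)$), $\E\,B_v=1$; thus posterior ratios factorize over symmetric differences of nested trees. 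Throughout I would work on an event $\Omega$ with $\P(\Omega)\ge1-4/n$ on which $(\bm{x}_v'Y)^2/(n+1)\le C\log n$ for every null $v$, $(\bm{x}_v'Y)^2/(n+1)\ge\tfrac12 n(\beta_v^*)^2$ for every signal $v$ (using $|\beta_{lk}^*|>A\log n/\sqrt n$, $A$ large), and $\sum_v B_v\le n^2/8$ (Markov's inequality, since $\E\,B_v=1$).

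First I would lower-bound the edge capacity. Writing $\mathcal V:=\mG(\mU)$, the Metropolis identity gives $Q(e)=\tfrac12\min\{\Pi(\mU\C Y)S(\mU\to\mathcal V),\,\Pi(\mathcal V\C Y)S(\mathcal V\to\mU)\}$, the $\tfrac12$ coming from laziness. Since $S(\mU\to\mathcal V)\ge 1/2^{L+1}$ for either move ($|\mU_{ext}|\le 2^L$, $|\mP(\mU)|\le 2^{L-1}$) and, on $\Omega$, the $\mG$-step improves the posterior — deleting a null node raises $\Pi(\cdot\C Y)$, and attaching a node of $\mathcal B$ raises it by a super-polynomial factor — we obtain $\Pi(\mathcal V\C Y)\ge(1-o(1))\Pi(\mU\C Y)$, and hence $Q(e)\ge(1-o(1))\,\Pi(\mU\C Y)/2^{L+2}$.

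Then I would bound the numerator $\sum_{(\mT,\mT'):e\in T_{\mT,\mT'}}\Pi(\mT\C Y)\Pi(\mT'\C Y)$. Because an edge $e=(\mU,\mathcal V)$ lies on $T_{\mT,\mT'}$ only if $\mG^k(\mT)=\mU$ (used forward) or $\mG^k(\mT')=\mU$ (used backward) for some $k\ge0$, the ordered sum is at most $2\sum_{\mT:\exists k,\,\mG^k(\mT)=\mU}\Pi(\mT\C Y)$. Every predecessor of $\mU$ is obtained from $\mU$ by attaching extra null nodes (those reached from $\mT$ by pure PRUNE steps) and, when $\mU\subsetneq\mT^*$, by additionally removing some signal nodes of $\mU$ (prune-then-grow predecessors). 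By the factorization, attaching a null node $v$ costs a factor $\le n^{-c}B_v$, while removing a signal node $v$ costs $n^cB_v^{-1}$, which on $\Omega$ is super-polynomially small; summing over predecessor configurations,
\[
\sum_{\mT:\exists k,\,\mG^k(\mT)=\mU}\Pi(\mT\C Y)\ \le\ \Pi(\mU\C Y)\,(1+o(1))\prod_{v\notin\mathcal B}(1+n^{-c}B_v)\ \le\ \Pi(\mU\C Y)\,(1+o(1))\,\exp\{n^{-c}\sum_v B_v\},
\]
and on $\Omega$ the exponent is $\le n^{2-c}/8$, which vanishes for $p_{lk}=n^{-c}$ decaying as in the hypothesis, so the whole sum is $\Pi(\mU\C Y)[1+o(1)]$. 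Combining the two bounds, $\rho(e)\le 2\Pi(\mU\C Y)[1+o(1)]\big/\bigl((1-o(1))\Pi(\mU\C Y)/2^{L+2}\bigr)=2^{L+3}[1+o(1)]$; a careful accounting of the constants ($|\mU_{ext}|$, $|\mP(\mU)|$, the laziness factor, and the acceptance probabilities on the canonical edges) sharpens this to the claimed $\rho(\mE)\le 2^{L+1}[1+o(1)]$.

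I expect the last display to be the main obstacle: the null Bayes factors $B_v$ are heavy-tailed — they have infinite variance under the $g$-prior — so the proliferation of overfitted predecessors can only be tamed by a first-moment (Markov) bound on $\sum_v B_v$, which is precisely what forces $p_{lk}$ to decay polynomially in $n$. A secondary difficulty is the combinatorial description of the predecessor set $\{\mT:\mG^k(\mT)=\mU\}$ — in particular the prune-then-grow predecessors arising when $\mU$ is in the growth phase — and checking that every transition-function edge is a legitimate GROW or PRUNE move, so that the multiplicative decomposition of $N_Y(\cdot)$ in the orthonormal Haar design applies.
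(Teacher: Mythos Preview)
Your approach is correct and parallels the paper's: bound the edge capacity $Q(e)$ from below via the proposal probability, bound the flow from above by $\Pi[\Delta(\mU)\C Y]$ (the predecessor set), and control the latter through the multiplicative structure of posterior ratios in the orthogonal Haar design. The two proofs differ in how the predecessor sum is handled. The paper works on the single event $\mA_n=\{\|\bm X'\bm\varepsilon\|_\infty\le 2\sqrt{n\log p}\}$---the same event already used for consistency---on which every null node contributes at most $n^{-(c-3/2)}$ to the posterior ratio; it then stratifies $\Delta(\mU)=\bigcup_K\Delta(\mU,K)$ by the number of extra nodes, bounds the shell cardinality by $\prod_j(|\mU_{ext}|+j-1)$, and sums a geometric series to obtain $\Pi[\Delta(\mU)\C Y]/\Pi(\mU\C Y)\le 1+1/(n^{c-5/2}-1)$, which is where $c>5/2$ enters (despite the lemma's stated $c>1$). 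Your product bound $\prod_{v\notin\mathcal B}(1+n^{-c}B_v)\le\exp\{n^{-c}\sum_v B_v\}$ together with Markov on $\sum_v B_v$ is slicker and would nominally need only $c>2$, but the Markov event is \emph{not} implied by $\mA_n$: on $\mA_n$ one only has $B_v\lesssim n^{3/2}$ per null node, hence $\sum_v B_v\lesssim n^{5/2}$, not $n^2/8$. Taking the intersection of $\mA_n$ with your Markov event costs an additional $4/n$ in probability, so $\P(\Omega)\ge 1-4/n$ is not quite right as written. If you drop the Markov event and use only $\mA_n$, your product bound becomes $\exp\{n^{5/2-c}/2\}=1+o(1)$ for $c>5/2$, and you recover the paper's argument exactly. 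The final hand-wave from $2^{L+3}$ to $2^{L+1}$ is acceptable; the paper obtains the sharper constant by tracking $|\mP(\mT)|\le 2^{L-1}$ in the overfitted edge and separating the proposal ratio from the posterior ratio inside the acceptance probability.
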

\proof  See Section \ref{sec:lem2_proof}.

\subsection{Twiggy Bayesian CART Mixes Well}\label{sec:patul_mix_result}
In Section  \ref{sec:BCART_ass1} we established encouraging results for   Bayesian CART   with PRUNE and GROW steps under the connected signal Assumption \ref{ass:f_classic} (a). 
We have also seen that under Assumption \ref{ass:f_classic} (b), where   signals are not connected, Bayesian CART can mix poorly (Theorem \ref{theo:poor}).
We now investigate mixing of Twiggy Bayesian CART in the context of the unstructured signal in Assumption \ref{ass:f_classic} (b). Moving from Bayesian CART to Twiggy Bayesian CART extends   signal reachability where trees can become more competitive with the Spike-and-Slab approach \citep{yang2016computational} when   signal is obscured by  layers of noise.

\begin{theorem}\label{theo:patul}
Assume the model \eqref{model} with the Bayesian CART prior from with $p_{lk}=n^{-c}$ {and $c>5/2+\log D$}. Under Assumption \ref{ass:f_classic} (b) with $|\mT^*_{int}|\lesssim \log^2 n$ and large enough $A>0$,  the Twiggy Bayesian CART algorithm in Section \ref{sec:twiggy} (with $D>1$)
satisfies with probability at least $1-4/n$  
\begin{equation}\label{eq:tau_bound2}
\tau_{\epsilon}\leq   {\frac{(D^L-1)}{D-1}}\times 2^{2L+3}\left\{ n\left[\left(c+\frac{1}{2}\right)\log (1+n) +  |\mT^*_{int}|C_{f_0}^2+1\right]   + 4\,|\mT^*_{int}|\log n+ \log \left(\frac{2}{\epsilon}\right) \right\}
\end{equation}
\end{theorem}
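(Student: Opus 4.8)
The plan is to follow the same strategy used for Theorem~\ref{eq:theo1}, namely invoke the sandwich relation \eqref{eq:sandwich} together with the canonical-path bound \eqref{eq:gap_lb}, so that $\tau_\epsilon \le \ell(\mE)\rho(\mE)\big(\log[1/\min_{\mT}\Pi(\mT\C Y)] + \log(1/\epsilon)\big)$, but now with a canonical path ensemble adapted to the \emph{twig} moves of Twiggy Bayesian CART rather than the single-node GROW/PRUNE moves. First I would redefine the transition function $\mG(\cdot)$: instead of attaching or detaching one node at a time when moving towards $\mT^*$, I would, in the underfitted case, attach the whole twig $[(\wt l,\wt k)\leftrightarrow(l^*,k^*)]$ connecting the current tree to the deepest rightmost signal node not yet reached, and in the overfitted case detach a maximal twig of redundant nodes; this is exactly the move set of Section~\ref{sec:twiggy}, so the resulting path lies along edges of the Markov chain graph. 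Because each twig move can reduce the Hamming distance by more than one, the path lengths are still bounded, and I would show $\ell(\mE)\le 2^{L+1}$ just as in Lemma~\ref{lem:classic_mE} (the bound is on the total number of node flips, which is unchanged).

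The substantive difference is the congestion parameter $\rho(\mE)$. The capacity $Q(e)=\Pi(\mT\C Y)P(\mT,\mT')$ of a twig edge now carries the extra factor coming from the layer-reweighted twig proposal probability \eqref{eq:reweight_tw}, which is bounded below by $\big(\tfrac{D-1}{D^L-1}\big)\tfrac{1}{2^{L-1}}$ rather than by $\tfrac{1}{2^{L+1}}$ as in the single-node case. Carrying this through the congestion calculation from the proof of Lemma~\ref{lemma:conduct} replaces the factor $2^{L+1}$ by $2^{L+1}\cdot\tfrac{D^L-1}{D-1}$, up to the $1+o(1)$ term; the posterior-ratio part of the congestion argument is unchanged because it rests only on the tree-selection consistency of Theorem~\ref{lemma:consist}, which holds under Assumption~\ref{ass:f_classic}(b) once $c>5/2+\log D$ is imposed (the extra $\log D$ absorbs the $\tfrac{D^L-1}{D-1}\le D^L = n^{\log D}$ inflation so that the consistency probability and the $n^{-(c-5/2)}$-type bounds still go to zero). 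I would also need to check the PRUNE side: the bounds \eqref{eq:tw_prune} show $S_{PRUNE}\ge 1/|\mT^i_{int}|$, which contributes only a polynomial-in-$n$ factor already subsumed in the $2^{2L+3}$ prefactor, and the reverse move used in the MH acceptance ratio \eqref{eq:mh_accept} is controlled the same way.

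Assembling the pieces: $\tau_\epsilon \le \ell(\mE)\rho(\mE)\big(\log[1/\min_\mT\Pi(\mT\C Y)]+\log(1/\epsilon)\big)$ with $\ell(\mE)\le 2^{L+1}$, $\rho(\mE)\lesssim 2^{L+1}\cdot\tfrac{D^L-1}{D-1}$, and $\log[1/\min_\mT\Pi(\mT\C Y)]$ bounded, as in the proof of Theorem~\ref{eq:theo1}, by $n\big[(c+\tfrac12)\log(1+n)+|\mT^*_{int}|C_{f_0}^2+1\big]+4|\mT^*_{int}|\log n$ using \eqref{eq:compare_post}, \eqref{eq:tree_prior_1dim} and Theorem~\ref{lemma:consist}; multiplying gives exactly \eqref{eq:tau_bound2}. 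The condition $|\mT^*_{int}|\lesssim \log^2 n$ enters only to ensure the congestion bound's error term is $o(1)$ and that the consistency constants behave, mirroring the role it plays implicitly in Lemma~\ref{lemma:conduct}. The main obstacle I anticipate is the congestion count for twig edges: a single twig edge $e$ can lie on the canonical path of \emph{many} pairs $(\mT,\mT')$ — more than in the single-node case, since a long twig is traversed as one edge — so I must argue carefully (via an injective encoding of the pairs routed through $e$, as in \cite{yang2016computational}) that the number of such pairs is still controlled by $\Pi$-mass times the capacity inflation factor $\tfrac{D^L-1}{D-1}$, and that the larger neighborhoods $\mN_g,\mN_p$ do not spoil the bound on $Z_g,Z_p$-type normalizers. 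Getting this bookkeeping right, while keeping the final factor exactly $\tfrac{D^L-1}{D-1}$ and not something larger, is the delicate part.
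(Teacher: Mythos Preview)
Your proposal is essentially correct and follows the same route as the paper: construct a twig-based canonical path ensemble, show $\ell(\mE)\le 2^{L+1}$, show $\rho(\mE)\le \tfrac{D^L-1}{D-1}\cdot 2^{L+1}(1+o(1))$, and combine with the same $\log[1/\min_\mT\Pi(\mT\C Y)]$ bound used for Theorem~\ref{eq:theo1}. Two points of detail are slightly misattributed and would trip you up in execution.

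First, the condition $c>5/2+\log D$ is \emph{not} needed for consistency (Theorem~\ref{lemma:consist} requires only $c>5/2$). It enters in the \emph{overfitted} congestion case: when a twig of length $k$ is pruned, the proposal ratio $S(\mT\rightarrow\mT^-)/S(\mT^-\rightarrow\mT)$ picks up a factor $\tfrac{D^L-1}{D-1}\cdot 2^{L}$, so the MH acceptance-ratio piece in \eqref{eq:ratios} is bounded by $\tfrac{D^L-1}{D-1}\,\e^{-k(c-5/2)\log n}\lesssim \e^{-(c-5/2-\log D)\log n}$, which is $o(1)$ precisely when $c>5/2+\log D$.

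Second, the posterior-ratio part of the congestion argument is \emph{not} unchanged under Assumption~\ref{ass:f_classic}(b). In the underfitted precedent bound for $\mU_1(\mT)$, the key difference from Lemma~\ref{lemma:conduct} is that under (b) only the \emph{preterminal} nodes of $\mT^*$ are guaranteed to be signals, so a twig removal guarantees only one signal lost (not $K$). The paper therefore bounds the cardinality of $\Delta(\mT,K)\cap\mU_1(\mT)$ by a Catalan number $\asymp 4^{|\mT^*_{int}|}$ and gets $\Pi[\mU_1(\mT)\C Y]/\Pi(\mT\C Y)\lesssim |\mT^*_{int}|\cdot 4^{|\mT^*_{int}|}\cdot \e^{-A^2\log^2 n/8}$; the assumption $|\mT^*_{int}|\lesssim\log^2 n$ is exactly what makes this $o(1)$. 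Your anticipated bookkeeping difficulty for twig edges is handled not by an injective encoding but by decomposing $\Delta(\mT)$ into shells $\Delta(\mT,K,\bm\kappa)$ indexed by the vector $\bm\kappa=(k(1),\dots,k(K))$ of twig lengths and summing the Catalan-number cardinality bounds over $\bm\kappa$.
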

\proof {See Section \ref{sec:proof_theo_patul}.}


\subsection{Locally Informed Versions Mix Even Better}

For the informed versions of Bayesian CART and Twiggy Bayesian CART described in Section \ref{sec:lit}, we obtain the following  upper bound on the mixing time that is only linear in $2^L$, where $L\leq L_{max}$ is required to go to infinity as $n\rightarrow\infty$. This speedup is likely a consequence of the  posterior-informed proposal defined in \eqref{eq:K_def}. 

\begin{theorem}\label{thm:coupling} 
Assume the model \eqref{model} and the Bayesian CART prior with $p_{lk}=n^{-c}$ and $ c>3$. 
Consider the Twiggy Bayesian CART with an informed proposal in \eqref{eq:K_def}. 
Under Assumption \ref{ass:f_classic} (a) or (b), for a large enough constant $A>0$ and $ L\leq L_{max}$ such that $L\rightarrow \infty$ as $n\rightarrow\infty$, we have with probability at least $ 1-4/n-\e^{-n/8}$, 
\[ \tau_\epsilon\lesssim  \log(6/\epsilon) \max\left( \frac{9\,(C_{f_0}+2)^2}{A^2}\frac{2^L\,n}{\log ^2n}, 2^{L+5}\right).\] For the informed Bayesian CART, the same bound holds but only under Assumption \ref{ass:f_classic} (a).
\end{theorem}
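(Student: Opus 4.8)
The plan is to prove a two-stage drift condition toward the signal-spanning tree $\mT^*$ and then convert it into a mixing-time bound via the drift--minorization argument of \citep{jerison2016drift} as deployed by \citep{zhou2021dimension}. Since $c>3>5/2$, Theorem \ref{lemma:consist} applies and places posterior mass $1-o(1)$ on the single state $\mT^*$ on an event of probability $\geq 1-4/n$; together with a $\chi^2_n$-type tail bound $\P(\|\bm\varepsilon\|_2^2>2n)\leq \e^{-n/8}$, which makes the empirical wavelet coefficients uniformly $O(1/\sqrt n)$-close to $\b^*$, this supplies the stated probability budget. It then suffices to show that the informed chain (i) from an arbitrary start drives the Hamming discrepancy $h(\mT)\equiv h(\mT,\mT^*)$ between the encodings of $\mT_{int}$ and $\mT^*_{int}$ down to $0$ with a per-step drift bounded below, and (ii) once at $\mT^*$ returns to $\mT^*$ in one step with probability $1-o(1)$, so that $\{\mT^*\}$ is a one-step minorizing set. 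Feeding $h(\mT^0)< 2^L$ (the Hamming distance between any two trees in $\bT_L$ is below $2^L$, cf.\ the proof of Lemma \ref{lem:classic_mE}) together with the drift and minorization rates into the bound of \citep{jerison2016drift} then yields $\tau_\epsilon\lesssim \log(1/\epsilon)\times h(\mT^0)/(\text{drift rate})$, which after tracking constants is exactly the claimed maximum.

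The crux is step (i), and the locally informed and \emph{thresholded} weights \eqref{eq:K_def}--\eqref{eq:wp} are what make it go through; I would split on how $\mT$ differs from $\mT^*$. If $\mT$ has a redundant internal node $(l,k)\notin\mT^*_{int}$, pruning the deepest such node multiplies the posterior by at least $p_{lk}^{-1}=n^c$ times a bounded likelihood factor, so its thresholded weight saturates at $w_p=n^{c-3/2}$; every harmful prune (removing a signal node, of which there are $\leq|\mT^*_{int}|\lesssim\log^2 n$) has weight exactly $1$ by the floor in \eqref{eq:wp}, whence $Z_p\leq(\#\text{productive prunes})\,n^{c-3/2}+\log^2 n$ and a productive prune is proposed with probability $1-o(1)$. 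If instead $\mT$ is missing a signal node, then under Assumption \ref{ass:f_classic}(a) the shallowest missing signal node is adjacent to $\mT_{ext}$, so a single-node GROW reaches it, and because $|\beta^*_{lk}|>A\log n/\sqrt n$ the posterior is multiplied by at least $n^{A^2\log n/8}/n^{c}$, saturating $w_g=n^{A^2\log n/8}$, while every noise GROW has $w_g\leq n^{-c+o(1)}$, so a productive GROW is proposed with probability $1-o(1)$. Under Assumption \ref{ass:f_classic}(b) a deep signal node need not be adjacent, and this is precisely where non-Twiggy informed Bayesian CART has no valid one-step productive move (any single-node GROW enters a structural/noise node with vanishing weight), consistent with the difficulty underlying Theorem \ref{theo:poor}; whereas the Twiggy GROW $[(\wt l,\wt k)\leftrightarrow(l^*,k^*)]$ reaches the signal node in one move, installing $\leq L$ intervening noise nodes at prior cost $n^{-cL}$ but gaining $\geq n^{A^2\log n/2}$ from the signal node, so for $A$ large the net ratio still saturates $w_g$, and the productive twig dominates $Z_g$. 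In each case I must also lower-bound the Metropolis acceptance ratio \eqref{eq:mh_accept} of the productive move: this is where thresholding is indispensable, since it keeps the reverse-to-forward ratio of proposal weights (a twig GROW's reverse is a twig PRUNE with a different normalizer) bounded away from $0$, the phenomenon flagged by \citep{zhou2021dimension}.

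Assembling: at most $|\mT^*_{int}|\lesssim\log^2 n$ productive GROWs install all signal nodes and at most $O(2^L)$ productive PRUNEs shed the redundant ones (each Twiggy PRUNE can excise a whole redundant twig, but a bushy overfitted start still decomposes into $O(2^L)$ twigs), each occurring per step with the probability lower bounds from the previous paragraph; this is the source of the linear-in-$2^L$ scaling and of the $2^{L+5}$ term. The $\tfrac{9(C_{f_0}+2)^2}{A^2}\cdot\tfrac{2^L n}{\log^2 n}$ term is the worst-case contribution of the signal-acquisition regime, where the per-step decrease of the drift function, tracked conservatively through \citep{jerison2016drift}, is throttled by the minimal admissible coefficient $A\log n/\sqrt n$; here $(C_{f_0}+2)^2$ upper-bounds an empirical squared coefficient at a signal node and $A^{-2}(\log^2 n/n)^{-1}$ is the reciprocal of the minimal squared signal, the remaining numerical constant falling out of the drift/minorization parameters. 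The minorization rate on $\{\mT^*\}$ is $1-o(1)$ by step (ii), since from $\mT^*$ any spurious GROW has acceptance $O(n^{-c+1/2})2^L=o(1)$. For non-Twiggy informed Bayesian CART the entire argument goes through verbatim under Assumption \ref{ass:f_classic}(a) but collapses under (b) at the GROW case above, which is why the statement restricts to (a).

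\textbf{Main obstacle.} The hardest part is the uniform verification of the one-step drift: one must simultaneously bound the normalizing constants $Z_g(\mT)$ and $Z_p(\mT)$ from above — controlling the aggregate weight of the up-to-$2^L$ unproductive neighbors and, for Twiggy, of all twig-shaped candidates — and lower-bound the Metropolis acceptance probabilities of the productive moves, showing that the thresholds $n^{A^2\log n/8}$ and $n^{c-3/2}$ in \eqref{eq:wp} are calibrated so that the forward/reverse normalizer mismatch between GROW and PRUNE (and between plain and twiggy moves) is absorbed by the caps rather than accumulating, so that neither the forward proposal probability nor the acceptance ratio degenerates.
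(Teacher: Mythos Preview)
Your overall strategy—drift conditions toward $\mT^*$ in the style of \citep{zhou2021dimension}, with the informed weights \eqref{eq:wp} ensuring that productive moves dominate the proposal and that acceptance does not degenerate—is the right one, and your case analysis (productive PRUNE for redundant nodes, productive GROW for missing signal, Twiggy needed under Assumption~\ref{ass:f_classic}(b)) matches the paper's reasoning closely. The probability budget $1-4/n-\e^{-n/8}$ and the failure of non-Twiggy informed CART under (b) are also identified correctly.

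Where your proposal diverges from the paper is in the choice of drift function and the structure of the drift argument. You use a single Hamming-distance Lyapunov function $h(\mT,\mT^*)$ together with one-step minorization at $\{\mT^*\}$. The paper instead runs a genuine \emph{two-stage} drift (its Theorem~\ref{thm:drift2_modif}): a first drift function $V_1(\mT)=\exp\{Y'(I-P_\mT/n)Y/[2^L(C_{f_0}+2)^2(n+1)]\}$ pushes underfitted trees into the overfitted set $A=\{\mT\supseteq\mT^*\}$, and a second, structural $V_2$ (essentially the number of extra nodes, made constant on $A^c$) pushes overfitted trees to $\mT^*$. The constants you are trying to explain arise precisely from $V_1$: $(C_{f_0}+2)^2$ is the normalizer ensuring $V_1\leq\e$ via the bound $Y'Y\leq n\,2^L(C_{f_0}+2)^2$, and the drift rate $1-\lambda_1\asymp A^2\log^2 n/[2^L(C_{f_0}+2)^2 n]$ comes from translating a posterior gain of $n^{A^2\log n/8}$ into a decrement of the RSS-based exponent (the paper's Lemma~\ref{lem:b_connect}). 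A Hamming-based drift does not see coefficient sizes at all, so it cannot produce the factor $(C_{f_0}+2)^2 n/(A^2\log^2 n)$; your paragraph justifying those constants therefore does not follow from your stated Lyapunov function.

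In short, the gap is not in the high-level mechanism but in the mismatch between your drift function and the bound you claim to recover. If you carry through the Hamming-distance argument carefully you will obtain a drift rate of order $1/2^L$ in both regimes and hence a bound $\tau_\epsilon\lesssim 2^L\log(1/\epsilon)$, which matches only the second term in the stated maximum; to obtain the paper's exact statement with the $9(C_{f_0}+2)^2 A^{-2}\,2^L n/\log^2 n$ term you must adopt the RSS-based $V_1$ (Definition~\ref{eq:drift_conds_classic}) and the two-drift machinery of Proposition~\ref{prop:v2_approx} rather than a single Hamming drift plus minorization.
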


\proof See Section \ref{pf:coupling}.
\begin{remark} \label{rm:myopic_bad} Theorem \ref{thm:coupling} provides at most linear in $n$ mixing for Bayesian CART {\em only} under Assumption \ref{ass:f_classic} (a).  The exponential mixing rate lower bound in Theorem \ref{theo:poor} still applies to the informed Bayesian CART\footnote{See that \eqref{eq:simple_bottle} in the proof is valid as long as the proposal neighbor is the same as the Bayesian CART.}. Therefore, the proposal informativeness  alone does not solve the myopic problem of   Bayesian CART.
\end{remark}

\begin{remark}\label{remark:coupling_app}
It is worthwhile to point out that the linear mixing in Theorem \ref{thm:coupling} is truly a consequence of the informed proposal as opposed to the proving technique (two-drift condition as opposed to canonical path argument). By using the   two-drift condition proving technique (for $c\geq 4$ and $D\leq \e$), as opposed to the canonical path argument, we can slightly improve the mixing rate upper bound in \eqref{eq:tau_bound1} for  Bayesian CART and for Twiggy Bayesian CART (the original non-informed versions) to $\tau_\epsilon\lesssim  \log(6/\epsilon) \times \max\left(\frac{C_{f_0}^2 }{\delta_1 A^2}\frac{2^{2L}\,n}{\log ^2n},{2^{2L+1}}\right),$ where $\delta_1=1$ for the Bayesian CART, and $\delta_1= \frac{2(D-1)}{D^{L}-1}$ for the Twiggy Bayesian CART. Compared with the bound \eqref{eq:tau_bound1} obtained by the  canonical path argument, this bound has a slight improvement by a logarithmic factor {when $|\mT_{int}^*|$ is fixed}. {For more explanation, see the discussion in Remark \ref{rmk:why_save}.} The proof is in Section \ref{pf:coupling_app}. 
\end{remark}

The  proof  of Theorem \ref{thm:coupling} rests on the two drift condition argument developed by \cite{zhou2021dimension}. In the next section, we provide details of the two drift functions chosen for our tree regression setting. 

\subsubsection{Two Drift Conditions}
Up to now, we have relied on the canonical path argument to upper-bound the mixing rates. 
In order to show linear mixing, we apply the two-drift condition framework developed by \cite{zhou2021dimension}. We say that a drift condition is satisfied on $A\subset\bT_L$ when there exists a function $V:\bT_L\rightarrow[1,\infty)$ and a constant $\lambda\in(0,1)$ such that \[ (PV)(\mT)\leq \lambda V(\mT)\quad\text{for all}\quad \mT\in A,\quad\text{where} \quad (PV)(\mT)  = \sum_{\wt\mT\in \bT_L}V(\wt\mT)P(\mT,\wt\mT).\]
Similarly to the canonical path construction \citep{yang2016computational}, \cite{zhou2021dimension} observe that in Bayesian variable selection, the chain tends to escape underfitted states. If it escapes to an overfitted state rather than the true covariate vector, then again the chain tends to escape to the true covariate vector. This idea was formalized by using two drift functions; drifting first to the non-underfittted states (an overfitted state or the true covariates), and then drifting to the true covariate vector. We also apply the same idea in the settings of Bayesian CART and Twiggy Bayesian CART. 
\begin{definition}\label{eq:drift_conds_classic}
We define  two drift functions as
\begin{align}
V_1(\mT)&=\exp\left\{\frac{1}{2^{L}\,(C_{f_0}+2)^2\,(n+1)}\left( Y'(I-P_{\mT}/n)Y\right)\right\},\label{eq:rescuer}\\
V_2(\mT)&=\exp\left\{\frac{1}{2^{L}}\big(|\mT_{int}\backslash\mT^*_{int}|+(|\mT^*_{int}\backslash\mT_{int}|\wedge1)\times (2^{L}-|\mT_{int}\cup\mT^*_{int}|)\big)\right\},\label{eq:v2}
\end{align}
where $P_{\mT}=\bm X_{\mT}\bm X_{\mT}'$ so that $P_{\mT}/n$ denotes the projection onto the column space spanned by $\bm X_{\mT}$ in regular designs. The drift ratios are defined as 
\[R_i(\mT,\wt\mT)=V_i(\wt\mT)/V_i(\mT)-1 \text{ for } i=1,2.\]
\end{definition}
\begin{remark}\label{eq:v2design}
The second drift function $V_2$ is designed so that for any overfitting $\mT$ we have $V_2(\mT)=\exp\{|\mT_{int}\backslash\mT^*_{int}|/2^{L}\}$, while $V_2$ is a constant function on non-overfitting (underfitting) trees as $V_2(\mT)=\exp\{1- |\mT^*_{int}|/2^{L}\}$. Therefore, {for any $\mT,\wt\mT\in \bT_L$ such that $\mT\supset \mT^*$ and $\wt\mT\not\supset \mT^*$}, we can guarantee $V_2(\mT)\leq V_2(\wt\mT)$ since $|\mT_{int}\backslash\mT^*_{int}|+|\mT^*_{int}| = |\mT_{int}|\leq 2^{L}$. The following lemma characterizes the chosen drift functions and the drift ratios for $V_1$ and $V_2$.
\end{remark}


The first drift condition guarantees that the chain will frequently visit overfitted states, while the second condition guarantees that within the overfitted states, the chain will consistently attempt to hit the true tree $\mT^*$. The following proposition is used to obtain the bound in Theorem \ref{thm:coupling}.

\begin{proposition}\label{prop:v2_approx} Under the same assumptions of Theorem \ref{thm:coupling}, with probability at least $ 1-4/n-\e^{-n/8}$ and with $ c>5/2$ we have the following properties of the drift functions: 
\begin{itemize}
\item[(i)] For any underfitted tree $\mT\in\bT_L$ such that $\mT\not\supseteq\mT^*$,
\[\frac{(PV_1)(\mT)}{V_1(\mT)}\leq 1-\frac{A^2}{2^{L+5}(C_{f_0}+2)^2}\frac{\log^2n}{n}+\frac{\e-1}{2n^{(A^2/8\log n-1)}}.\]
\item[(ii)] For any overfitted tree $\mT\in\bT_L$ such that $\mT\supset\mT^*$, 
\[\frac{(PV_2)(\mT)}{V_2(\mT)}\leq 1-\frac{1}{2^{L+2}}\frac{1}{(1+n^{5/2-c})}+\frac{M}{n^{c-3/2}}+n^{1- (A^2\log n )/8 },\]
where $M=1$ for the Bayesian CART and $M=2L$ for the Twiggy Bayesian CART. 
\end{itemize}
\end{proposition}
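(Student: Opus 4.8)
The plan is to verify each drift inequality by splitting the one-step expectation $(PV_i)(\mT)$ according to the type of neighbour $\wt\mT$ that the informed proposal can reach, and then to bound each contribution using (a) the explicit form of the posterior ratio $\Pi(\wt\mT\C Y)/\Pi(\mT\C Y)$ from \eqref{eq:compare_post}, (b) the consistency estimates implicit in Theorem \ref{lemma:consist} (concentration of $Y'(I-P_\mT/n)Y$ around its mean, which holds on the event of probability $\geq 1-4/n-\e^{-n/8}$ via a $\chi^2$ tail bound), and (c) the definitions of $V_1,V_2$ and the associated drift ratios $R_i$. Throughout I will use the elementary estimate $(PV_i)(\mT)/V_i(\mT) = 1 + \sum_{\wt\mT}P(\mT,\wt\mT)R_i(\mT,\wt\mT)$ and bound the sum separately over GROW neighbours, PRUNE neighbours, and the lazy self-loop (for which $R_i=0$).

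For part (i): when $\mT\not\supseteq\mT^*$ is underfitted, there is at least one missing signal node that can be added (for Twiggy, an entire missing twig can be attached), and attaching it multiplies the posterior by a factor that is at least $\exp\{c' A^2\log^2 n\}$ for some constant, because a signal coefficient of magnitude $\gtrsim A\log n/\sqrt n$ contributes a reduction $\gtrsim A^2\log^2 n$ to the quadratic form $Y'(I-P_\mT/n)Y$ on the good event. Thus on a signal-completing GROW move the drift ratio $R_1$ is strongly negative: $V_1$ decreases because its exponent is (a small multiple of) $Y'(I-P_\mT/n)Y$, which drops by $\gtrsim A^2\log^2 n$, giving $R_1 \leq -\,(\text{const})\,A^2\log^2 n/(2^L (C_{f_0}+2)^2 n)$. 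Combined with the informed-proposal lower bound on the probability of proposing that particular GROW move --- the threshold $n^{(A^2\log n)/8}$ in $w_g$ caps the weight of any single neighbour, so the probability of the signal move is $\geq$ (its capped weight)$/Z_g(\mT) \gtrsim$ a constant times $2^{-L}$ (the normalizing constant $Z_g$ has at most $2^L$ terms each $\leq n^{(A^2\log n)/8}$), we pick up the leading term $-A^2\log^2 n/(2^{L+5}(C_{f_0}+2)^2 n)$. The remaining (non-signal) GROW and PRUNE moves can only increase $V_1$ by a multiplicative factor at most $\exp\{O(\log^2 n)/(2^L n)\}$, and I will control their total contribution by the error term $(\e-1)/(2n^{A^2/8\log n - 1})$, using that such moves either have small weight (from the threshold in $w_g$) or produce a controlled change in the quadratic form.

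For part (ii): when $\mT\supset\mT^*$ is overfitted, $V_2(\mT)=\exp\{|\mT_{int}\backslash\mT^*_{int}|/2^L\}$, so a PRUNE move removing a redundant node (or twig) decreases the exponent by $1/2^L$ (or $M/2^L$), giving $R_2 \approx -(1-\e^{-1/2^L}) \approx -1/2^{L}$; there is always at least one such redundant pre-terminal node to prune. The probability of a pruning move that removes a redundant node is, under the informed PRUNE proposal with weights $w_p$, bounded below by (roughly) $1/2^{L+2}\cdot 1/(1+n^{5/2-c})$: the factor $1/(1+n^{5/2-c})$ comes from comparing the weight of the good prune move against the worst competing (overfitting-preserving) prune move, whose posterior ratio is controlled by $n^{5/2-c}$ via the prior penalty $p_{lk}=n^{-c}$ against the $(1+n)^{1/2}$ Occam factor in $N_Y$. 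The two error terms absorb: $M/n^{c-3/2}$ the possibility of a GROW move away from an overfitted state (whose weight is capped, and whose $R_2$ is at most $2^{-L}$, with $M=1$ vs $M=2L$ reflecting the larger Twiggy neighbourhood), and $n^{1-(A^2\log n)/8}$ the possibility of pruning a genuine signal node, which on the good event has exponentially small acceptance-weighted probability by the same $A^2\log^2 n$ signal-strength argument as in part (i).

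I expect the main obstacle to be part (ii), specifically obtaining the sharp constant $1/(1+n^{5/2-c})$ in the escape probability: this requires carefully identifying, among all PRUNE candidates of an overfitted tree, the \emph{worst-case} ratio between the weight of a "good" prune (landing closer to $\mT^*$) and the total weight $Z_p(\mT)$, and showing the competing prunes of redundant nodes do not drown it out --- the thresholding in $w_p$ at $n^{c-3/2}$ is exactly what makes this work, and verifying that the Twiggy variant only costs a factor $M=2L$ (rather than $2^L$) in the error term requires counting that a redundant node has at most $L$ ancestors whose twig-prune could compete.
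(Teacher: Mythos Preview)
Your overall decomposition $(PV_i)(\mT)/V_i(\mT)=1+\sum_{\wt\mT}P(\mT,\wt\mT)R_i(\mT,\wt\mT)$ matches the paper, and the high-level split into GROW/PRUNE contributions is right. However, there is a genuine gap in part (i) that would make your bound off by a factor $2^L$.

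\textbf{The gap in part (i).} You bound the probability of the \emph{single} signal-completing GROW move by $w_g/Z_g\gtrsim 2^{-L}$ (each of at most $2^L$ neighbours has capped weight $\le n^{(A^2\log n)/8}$). Multiplying by $-R_1(\mT,\wt\mT)\asymp A^2\log^2 n/(2^L(C_{f_0}+2)^2 n)$ gives only $A^2\log^2 n/(2^{2L}(C_{f_0}+2)^2 n)$, not the claimed $2^{-(L+5)}$ rate. The missing idea is that whenever $Z_g(\mT)$ is large, it is large \emph{because} many GROW neighbours $\wt\mT$ have posterior ratio close to the cap, and \emph{each} such $\wt\mT$ therefore also has $-R_1(\mT,\wt\mT)$ of the required size (this is a direct algebraic link between $\log B(\mT,\wt\mT)$ and $\log V_1(\mT)/V_1(\wt\mT)$ through \eqref{eq:compare_post}). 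The paper formalizes this via a set $\mathcal D=\{\wt\mT:B(\mT,\wt\mT)\ge n^{a-V}\}$ and shows $\sum_{\wt\mT\in\mathcal D}P(\mT,\wt\mT)\gtrsim 1$ (not $2^{-L}$), while every $\wt\mT\in\mathcal D$ has $-R_1\gtrsim (a-V)\log n/(2^L n(C_{f_0}+2)^2)$. This collective argument is exactly what buys the linear-in-$2^L$ rate and is the whole point of the informed proposal; your single-move bound recovers only the uninformed rate (cf.\ Remark \ref{remark:coupling_app}). A separate small imprecision: non-signal GROW moves always have $R_1\le 0$ (adding columns cannot increase the residual), so they need no upper control --- only PRUNE moves feed the positive error term.

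\textbf{Part (ii).} Your plan is essentially correct and matches the paper, but the explanation of $1/(1+n^{5/2-c})$ is slightly garbled. The competing moves are not ``overfitting-preserving'' prunes (those are the \emph{good} ones); the competition is between prunes removing a redundant node (weight $w_p=n^{c-3/2}$ by the upper threshold, at least one such candidate exists) and prunes removing a signal node (weight $w_p=1$ by the lower threshold, at most $n$ of them). The ratio $|b_\mT|n^{c-3/2}/(|a_\mT|+|b_\mT|n^{c-3/2})\ge 1/(1+n^{5/2-c})$ then drops out directly. Your identification of the $M/n^{c-3/2}$ and $n^{1-(A^2\log n)/8}$ terms is right; for Twiggy the $M=2L$ comes simply from $|\wt\mT_{int}\setminus\mT_{int}|\le L$ in the bound $R_2\le 2k/2^L$, not from counting ancestors.
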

\proof See Section \ref{pf:prop1}.

\section{Performance Evaluation}\label{sec:neumerical}

We  compare Bayesian CART, Twiggy Bayesian CART (with $D=2$) and their informed versions\footnote{The upper thresholds of the informed algorithms in \eqref{eq:wp}, we use $\e^{10}$ without tuning.} on simulated data as well as a real dataset. To appreciate the effect of tree-shaped regularization, we also compare Bayesian CART with the Metropolis-Hastings one-site sampler for Spike-and-Slab priors. 
\subsection{Simulation Study}
\paragraph{Data} We generate simulated data from the model \eqref{model} with  three  true signal skeletons. Given the skeleton, all true coefficients are set equal to $2$. (1) Case 1 (fully connected signal) is a  full tree of internal depth $3$, i.e. $\mT_{int}^*=\mB =\{ (-1,0)\}\cup_{l=0}^{3}\cup_{k=0}^{2^l-1}\{(l,k)\}$. (2) Case 2 is a single, disconnected and isolated deep signal at $\mB =\{(4,0)\}$. (3) Case 3 is a mixed signal consisting of several isolated nodes with  $\mB =\{ (2,0),(2,3), (3,2),(3,3), (3,4), (3,5), (4,15)\}$. Recall that $\mT^*$  is the smallest tree that includes $\mB$ as its internal nodes. 

\paragraph{Prior} As the split probability in \eqref{eq:tree_prior_1dim} for Bayesian CART (using $L=L_{max}$), we use $p_{lk} = \alpha n^{-c}$, which was used in our theoretical studies {up to a constant factor}. We choose $\alpha$ so that $p_{lk} = 0.25/2^{\Lmax-6}$ for $\Lmax \in [6,...,11]$. 
 For the Spike-and-Slab prior, we consider  $\frac{\Pi(\mT\cup (l,k))}{\Pi(\mT)}=p^{ss}_l$, where we consider two cases {$p^{ss}_1 =\alpha n^{-c}= 0.25/2^{\Lmax-6}$ and $p^{ss}_2= 0.01 \,n^{1/4} 6^{-l}$.} Note that $p^{ss}_2$ penalizes deep node inclusion  more strongly than $p^{ss}_1$. As shown Figure \ref{fig:ss_tune} in Section \ref{sec:add_vis}, several MCMC runs on Case (3) reveal that $p^{ss}_1$ has a higher acceptance rate but $p^{ss}_2$ converges faster to the true signals. The two chosen split probabilities satisfy the sufficient conditions studied in \cite{rockova2021ideal} with which a Spike-and-Slab algorithm can achieve locally adaptive minimax rate.

\begin{figure}
    \centering
    \includegraphics[width=\linewidth]{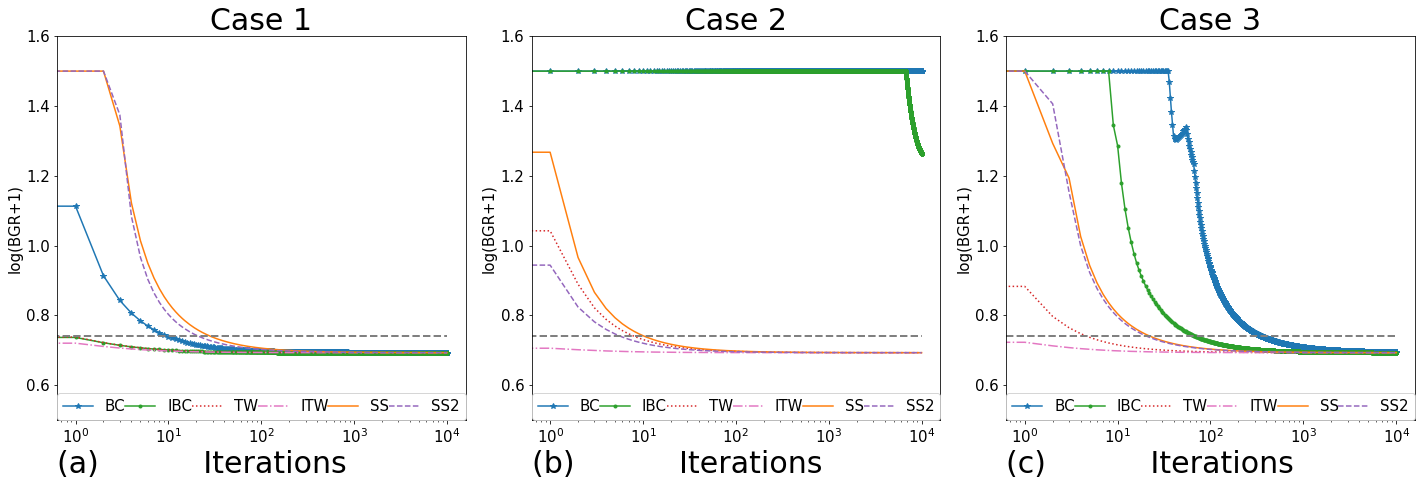}
    \caption{The local BGRs (log-transformed) when $n=2^7$. The local BGR values tend to decrease during the course of MCMC sampling. We have capped the values at a threshold  $log(Y+1) = 1.5$ for clearer visualization. The horizontal grey dotted line corresponds to the local BGR value 1.1. {(Legend) BC and IBC: original and informed Bayesian CART, TW and ITW: original and informed Twiggy Bayesian CART, SS and SS2: Spike-and-Slab with prior $p_1^{ss}$ and $p_2^{ss}$ respectively.}}
    \label{fig:bgr_support}
\end{figure}

\paragraph{Performance measure} We first define a proxy for the mixing time defined in \eqref{eq:tau_def}   using BGR (Gelman-Rubin diagnostic) \citep{gelman1992inference}. BGR measures the difference between in-chain  and across-chain variability when considering multiple initializations. Formally, consider a collection of $K$ chains $\mC=\{\mC_1,...,\mC_{K}\}$. Denoting the $L$ tree samples from each $\mC_k$ by $\{\mT_1^{\mC_k},...,\mT_L^{\mC_k}\}$ and the $j$-th coefficient sample from tree $\mT_i^{\mC_k}$ by $\beta_j(\mT_i^{\mC_k})$, the BGR for $\beta_j$ is 
\begin{equation*}
{\rm BGR}(\beta_j|\{\mT_1^{\mC_k},...,\mT_L^{\mC_k}\}_{k=1}^K) = \frac{\frac{L-1}{L}W_j+\frac{1}{L}B_j}{W_j},
\end{equation*}
where $W_j = \frac{1}{K}\sum_{k=1}^K \frac{1}{L-1}\sum_{i=1}^L (\beta_j(\mT_i^{\mC_k})-\bar{\beta}_{jk})^2$ and $B_j = \frac{L}{K-1}\sum_{k=1}^K(\bar{\beta}_{jk}-\bar{\beta}_j)^2$, given the in-chain and between-chain coefficient means $\bar{\beta}_{jk} = \frac{1}{L}\sum_{i=1}^L \beta_j(\mT_i^{\mC_k})$ and $\bar{\beta}_j =\frac{1}{K}\sum_{k=1}^K \bar{\beta}_{jk}.$ To reduce the computational cost of monitoring the BGRs for a large $L$,  we modify the BGR as follows. First, we measure the BGR of the estimated coefficient  $ \hat\beta_j(\mT_i^{\mC_k})$ instead of the sampled $\beta_j(\mT_i^{\mC_k})$. Given a tree $\mT$, the estimated coefficient of the $j^{th}$ node $ \hat\beta_j(\mT)=(X_j'X_j)^{-1}X_j'Y\times\mathbb{I}_{(l_j,k_j)\in \mT_{int}}$ only depends on whether the $j^{th}$ node is included in the tree. Therefore, it can be seen that  the BGR of $\hat\beta_j $ is equal to the BGR of an inclusion indicator, denoted by $I_j\in\{0,1\}$. Second, because BGR's tend to be smaller for noise coefficients, we monitor the BGR of signal nodes which have larger BGR values in general. Last, we compute BGRs locally; At time $t$, we consider the most recent $100$ samples to calculate BGR. Denote a $t^{th}$ sample of chain $\mC_k$ by $\mT_t^{\mC_k}$. Given $K=10$ chains, a local BGR of the $j^{th}$  indicator $I_j$ at time $t$ is defined as
\begin{equation}\label{eq:bgr_local}
{\rm BGR}(j,\mC|t)={\rm BGR}(I_j| \{\mT^{\mC_k}_{t},\mT^{\mC_k}_{t-1},...,\mT^{\mC_k}_{t-99}\}_{k=1}^{10}) .
\end{equation} As shown in Figure \ref{fig:bgr_support}, as the iteration proceeds, the ${\rm BGR}(j,\mC|t)$ values tend to decrease during the course of MCMC sampling. With this empirical observation, we define a proxy of the mixing time {called BGR $\alpha$-time by}
\begin{equation}\label{eq:measure_BGR}
\tau_\alpha^{\rm BGR} = 10^6 \wedge \arg\min_{t\geq 0} \{\max_{j\in S} {\rm BGR}(j,\mC|t)\leq \alpha\},
\end{equation}
where $S$ is the index set of true signals in the data generating mechanism. The inner maximum $\max_{j\in S} {\rm BGR}(j,\mC|t)$ quantifies the mixing quality of the chains at time $t$. In this paper, we consider $\alpha=1.1$, as this number used widely as a criterion of mixing \citep{gelman1992inference}.

We threshold the mixing time at $10^6$ because beyond this number, it is hard to see that a chain mixes in a reasonable time. In short, we measure the BGR in \eqref{eq:measure_BGR} for every $100$  out of $1,000,000$ iterations. Note that the chain may meander towards a poor local neighborgood, far from $\mT^*$. Therefore, to quantify the quality of each tree $\mT$, we can think  of $(l,k)\in \mT_{int}\backslash \mT^*_{int}$ as a \emph{false positive}  and $(l,k)\in \mT^*_{int}\backslash \mT_{int}$ as a \emph{false negative}. A natural quality measure for trees is the F1 score, the harmonic mean of precision and recall (a low precision indicates that the model is overfitting, while a low recall indicates that the model is underfitting).  If F1 equals $1$ (i.e. both precision and recall are $1$) then the tree equals $\mT^*$. The F1 values are obtained from the last 100 iterations of each chain. Note that \emph{not} all nodes in $\mT^*$ are necessarily signals. Therefore, when we calculate F1 for Spike-and-Slab, we consider only the true signals $\mathcal B$ as the true model. { In a similar spirit, we also measure hitting time defined by
$\tau_{hit} = \inf_{t\geq 0} \{\mT_t = \mB\}.$
} Lastly, we also present the acceptance rates, which may help understand the  stickiness of Markov chains. 

\begin{figure}[!t]
    \centering
    \includegraphics[width=1\linewidth]{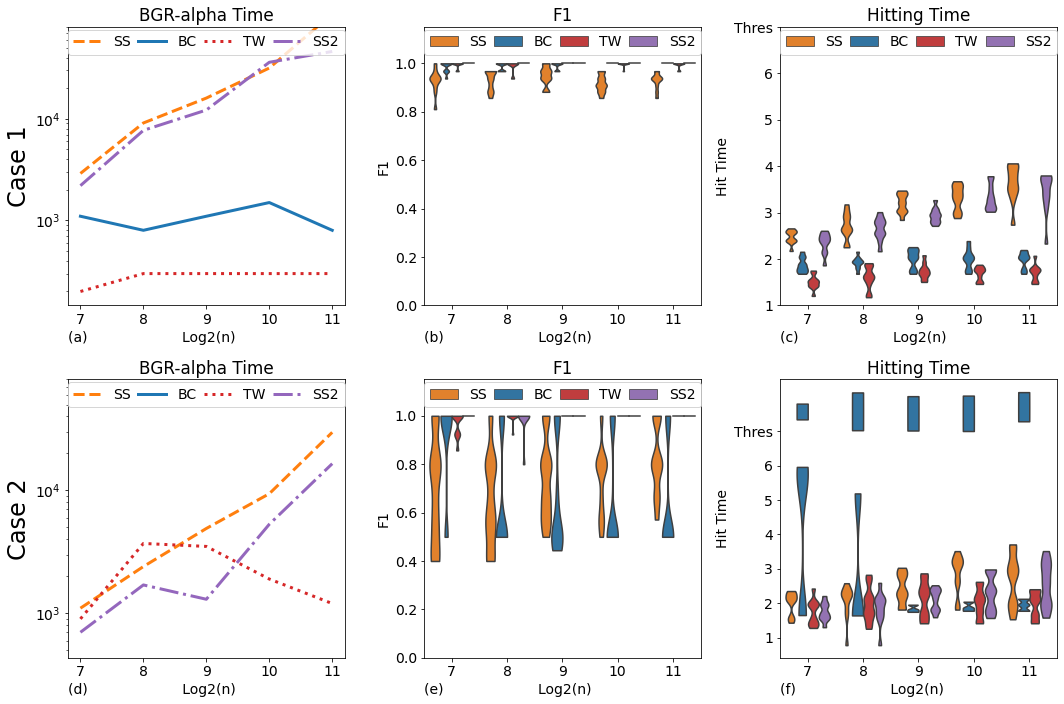}
    \caption{Plots (a) and (d) depict time for achieving the worst case local BGR below 1.1. Plots (b) and (e) show F1 scores. The small F1 value of Spike-and-Slab with $p_1^{ss}$ is due to low precision (overfit). Plots (c) and (f) show hitting times. {(Legend) BC: Bayesian CART, TW: Twiggy Bayesian CART, SS and SS2: Spike-and-Slab with prior $p_1^{ss}$ and $p_2^{ss}$ respectively.}}
    \label{fig:fastmix}
\end{figure}

\subsubsection{The Effect of Signal Structure} 
\paragraph{Connected signals.} The results on Case (1) numerically affirm the sufficient condition for rapid mixing of Bayesian CART in Theorem \ref{eq:theo1}, which says the Bayesian CART  mixes rapidly when all signals are connected. By increasing the size of data from $n=2^7$ to $n=2^{11}$ ($\Lmax$ from 6 to 10), we measure the BGR-$\alpha$ time $\tau_\alpha^{\rm BGR}$ for $\alpha=1.1$ in \eqref{eq:measure_BGR} and F1 as well as $\tau_{hit}$. We run 10 chains for  Bayesian CART, Twiggy Bayesian CART and Spike-and-Slab (both $p_1^{ss}$ and $p_2^{ss}$). For each method, 10 chains are initialized with randomly generated trees. The result is in Figure \ref{fig:fastmix} (a), (b), and (c). {We see that Bayesian CART hits the true tree faster than Spike-and-Slab.} In addition, we see that Bayesian CART achieves a good BGR-$\alpha$ time as the  sample size increases. We observe that  Bayesian CART is enjoying this favorable property over the Spike-and-Slab in two ways. First, we see that $\tau_\alpha^{\rm BGR}$ of Bayesian CART increases more slowly than Spike-and-Slab, and {second, the hitting time in Figure \ref{fig:fastmix} (c) is superior over that of Spike-and-Slab. Further investigation revealed that the source of the low F1 values of Spike-and-Slab with $p_1^{ss}$ was  low precision i.e., it often  overfitted. We notice that the deeper penalty through $p_2^{ss}$ (as opposed to) $p_1^{ss}$ improves Spike-and-Slab in all the performance measures (F1 and BGR-$\alpha$ time, and the hitting time).} Lastly, we observe that Twiggy Bayesian CART does similarly well as Bayesian CART. Note that in Figure \ref{fig:bgr_support}, when $n=2^7$, the local BGRs of the Twiggy Bayesian CART decrease faster compared to Bayesian CART.

\paragraph{Disconnected signals.} Now we numerically affirm the exponential lower bound of Bayesian CART in Theorem \ref{theo:poor} in the context of  deep isolate signals as in Example \ref{ex:caution}. On the other hand, Theorem \ref{theo:patul} says that Twiggy Bayesian CART still mixes rapidly. These theoretical results are affirmed by the results on Case (2) in Figure \ref{fig:fastmix} (d), (e), and (f). In terms of $\tau_\alpha^{\rm BGR}$, we see that Twiggy competes with Spike-and-Slab and then performs better when the sample size becomes larger.  Bayesian CART did not achieve local BGR under 1.1 in \eqref{eq:measure_BGR} in a given time range (1,000,000 iterations).
 This is why there is no line for Bayesian CART in Figure \ref{fig:fastmix} (d). Similarly, the hitting time would have to exceed the maximum number of $1,000,000$ iterations. This is marked by the histogram bars above the maximum allowed number of iterations. {Besides the BGR-$\alpha$ time displayed in Figure \ref{fig:fastmix} (d), in Section \ref{sec:add_vis} we additionally display  the smallest local BGRs in Figure \ref{fig:hit1} (e). This value is obtained by running all the chains for $10^6$ iterations and by taking the minimum over local BGRs defined in \eqref{eq:bgr_local} for each chain.} We observe that the minimum local BGR of Bayesian CART is exceedingly large. { This is related to the small F1 of Bayesian CART; further investigation revealed that low Recall values made F1 small, indicating underfit of Bayesian CART}. {On the other hand, Spike-and-Slab achieves local BGR smaller than 1.1, and using $p_2^{ss}$ resolves the  overfitting problem of $p_1^{ss}$ as in Figure \ref{fig:fastmix} (e). However, when $n$ increases, even using $p_2^{ss}$ does not bring up Spike-and-Slab to the speed of Twiggy Bayesian CART in terms of BGR-$\alpha$ time and hitting time (Figure \ref{fig:fastmix} (d) and (f)).} 

\begin{figure}[!t]
    \centering
    \vspace{-0.5cm}
    \includegraphics[width=\linewidth]{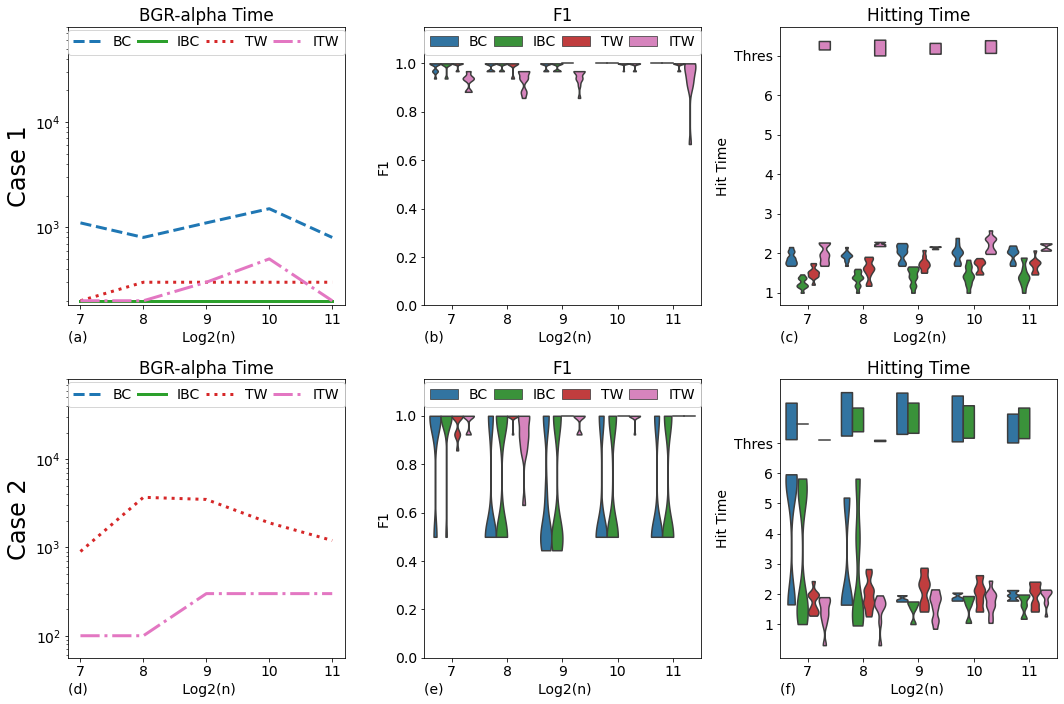}
    \caption{The improvement of the informed variants. (a) and (d) Time for achieving the worst case local BGR below 1.1. (b) and (e) Informed Twiggy Bayesian CART tends to overfit than its non-informed version. (c) and (f) Hitting times. {(Legend) BC and IBC: original and informed Bayesian CART, TW and ITW: original and informed Twiggy Bayesian CART.}}
    \label{fig:tw1}
\end{figure} 

\begin{figure}[!t]
    \centering
    \includegraphics[width=\linewidth]{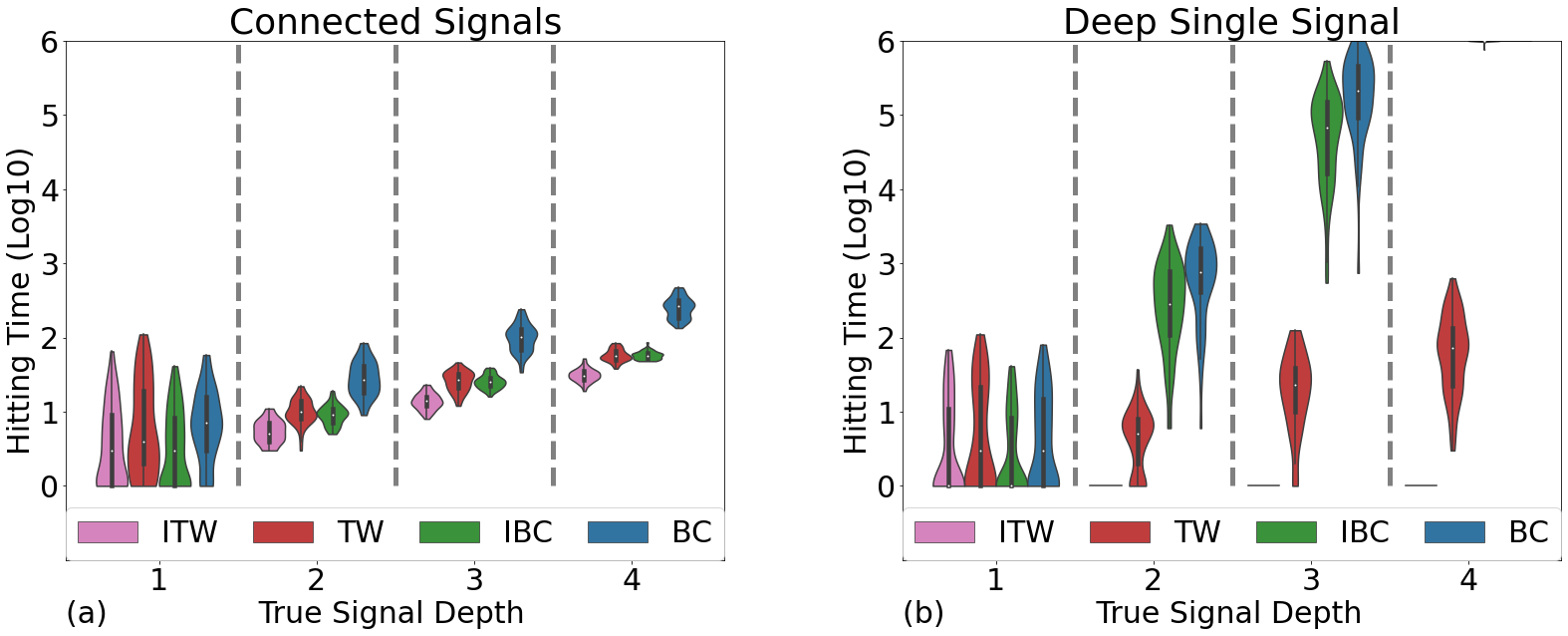}
    \caption{Hitting time when the true tree gets deeper. Informed Bayesian CART hits the true signals faster than Bayesian CART. Likewise, the informed Twiggy Bayesian CART is faster than Twiggy Bayesian CART. (b) However, for an isolated deep signal, informed Bayesian CART does not hit faster than Twiggy Bayesian CART. {(Legend) BC and IBC: original and informed Bayesian CART, TW and ITW: original and informed Twiggy Bayesian CART.}}
    \label{fig:informed1}
\end{figure}

\subsubsection{The Effect of Informed MCMC} 
We repeat the analyses to investigate the improvement brought by the posterior-informed proposals. From Figure \ref{fig:tw1} and Figure \ref{fig:tw3} in Section \ref{sec:add_vis}, we see that  informed versions overall improve on their non-informed counterparts. The BGR-$\alpha$ (Figure \ref{fig:tw1} (a) and (d), and Figure \ref{fig:tw3} (a)) and hitting times (Figure \ref{fig:tw1} (c) and (f), and Figure \ref{fig:tw3} (c)) become faster. However, in terms of hitting time, the informed Twiggy Bayesian CART does not outperform its non-informed version. This is related to that the informed Twiggy Bayesian CART tends to overfit, thereby having smaller F1 values (Figure \ref{fig:tw1} (b) and (e), and Figure \ref{fig:tw3} (b)). It might be understood as a trade-off of having a higher acceptance rate than Twiggy Bayesian CART (Figure \ref{fig:tw2}). However, the informed Bayesian CART also has an increased acceptance rate, but it does not result  in overfitting. We think that the decreased precision of informed Twiggy Bayesian CART is due to the combination of \emph{more flexible movement} and the larger acceptance rate (e.g., the lower bound in \eqref{eq:wp}). 
 
 When the signal depth deepens, in the same settings of Example \ref{ex:caution} and Example \ref{ex:caution2}, the hitting time results are in Figure \ref{fig:informed1}. We can see that informed versions hit generally faster than their non-informed versions. However, as in Figure \ref{fig:informed1} (b), the informed Bayesian CART falls short of resolving the hitting-time slow down, which is exponential in the signal depth. This result is consistent with Remark \ref{rm:myopic_bad}, implying that the problem of the myopic movement cannot be overcome even when using informed proposals. For an example where signals are disconnected but not very far from each other, see Figure \ref{fig:informed4} (b). { On the other hand, the informed Twiggy Bayesian CART includes the signal in a single step.}

\begin{figure}
    \centering
    \includegraphics[width=\linewidth]{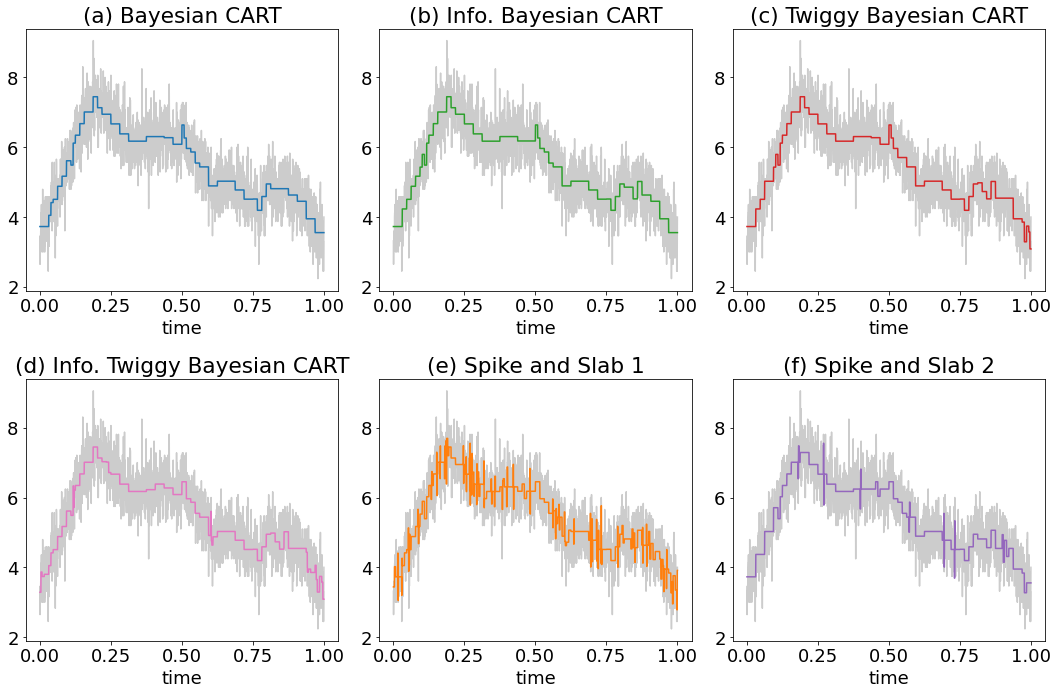}
    \caption{The visualization of MCMC chains on Call Center Data. {The colored lines are the median tree fit obtained from  1000 samples after 10,000 burn-in and the gray lines are the data.} (a) Bayesian CART (b) informed Bayesian CART (c) Twiggy Bayesian CART (d) informed Twiggy Bayesian CART (e) Spike-and-Slab (prior: $p_{lk}^{ss,1} = 0.01$) (f) Spike-and-Slab (prior: $p_{lk}^{ss,2} = 0.01 \times 6^{-l/2}$)}
    \label{fig:call1}
\end{figure}

\subsection{Call Center Data}
The data set, collected by a call center of an Israeli bank, contains arrival times, waiting times and service times. We focus on the arrival times, which can be seen  as an inhomogeneous Poisson process with a mean function $\mu(t)$  \cite{brown2005statistical}. This dataset was also studied in \cite{cai2014adaptive,rockova2021ideal} in the context of constructing adaptive confidence bands in non-parametric regression. In our analysis, we want to compare the mixing performance of each method studied in our simulations. We preprocess the data following \cite{rockova2021ideal} where the response is $Y_i = \sqrt{N_i+1/4}$ where $N_i$ is the number of calls arriving in the $i$-th time interval. We have $n=2048$ equispaced time intervals. As a proxy of mixing, we measure MSE  (distance from data to the posterior function draw) and the maximum local BGRs 
$\max_{j}{\rm BGR}(\beta_j| \{\mT^{\mC_k}_{t},\mT^{\mC_k}_{t-1},\ldots,\mT^{\mC_k}_{t-99}\}_{k=1}^{10})$ at time $t$. As the split probability of the tree based models, we use $p_{lk}=0.01$. For Spike-and-Slab, we again use two types of priors $p_{lk}^{ss,1} = 0.01$ and $p_{lk}^{ss,2} =  0.01 \times 6^{-l/2}$. As discussed in \cite{cai2014adaptive,rockova2021ideal}, the data approximately follows the model in \eqref{model} with the variance $\sigma^2 = 1/4$. Therefore, {we fix the variance at 0.25 in all methods.}  
\begin{figure}
    \centering
    \includegraphics[width=\linewidth]{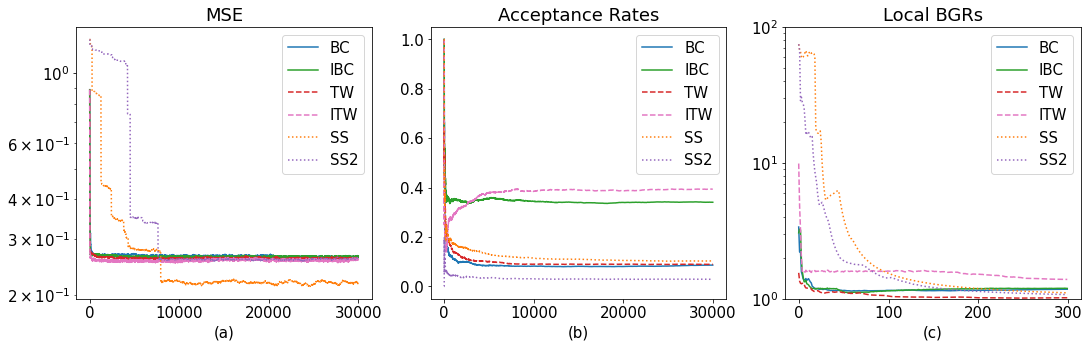}
    \caption{The performance measures on the Call Center data. (a) The MSE (log transformed) over the MCMC iterations (b) Acceptance rates. (c) The local BGRs for every 100 iterations on the Call center data. The minimum local BGRs achieved were Bayesian CART: 1.18, informed Bayesian CART: 1.19, Twiggy Bayesian CART: 1.02, informed Twiggy Bayesian CART: 1.39, Spike-and-Slab ($p_{lk}^{ss,1}$): 1.11, Spike-and-Slab ($p_{lk}^{ss,2}$): 1.07.}
    \label{fig:call4}
\end{figure}

In Figure \ref{fig:call1}, Spike-and-Slab shows a trade-off between overfitting and mixing. When the split probability is low ($p_{lk}^{ss,2}$), the chain may avoid overfitting (Figure \ref{fig:call1} (f)) compared with a high split probability $p_{lk}^{ss,1}$ (Figure \ref{fig:call1} (e)). However,   Figure \ref{fig:call4} (a) shows that the speed of the chain's ability to explain the data is much slower. The informed versions of the tree models catch more detailed signals than their non-informed counterparts. The speed of decreasing MSE is informed Twiggy Bayesian CART $<$  Twiggy Bayesian CART $<$ informed Bayesian CART $<$ Bayesian CART $<$ Spike-and-Slab (Figure \ref{fig:call4}).

\section{Concluding Remarks}\label{sec:conclusion}
This work is the first to have described upper bounds on mixing times for Bayesian CART, a simplified version of BART. We focused on one-dimensional setting and various proposal schemes, including our new Twiggy Bayesian CART proposal. We showed rapid mixing of Bayesian CART when the signal is connected on a tree. We also obtained rapid mixing for Twiggy Bayesian CART which does not require this assumption. We showed that without signal connectivity, Bayesian CART mixes poorly.
Extending our conclusions to more dimensions is an interesting problem. The first challenge   is the absence of  identifiability of the multi-dimensional trees. The non-identifiabiltiy prevents   from guaranteeing  posterior consistency (e.g., Lemma \ref{lemma:consist}), which is crucial in the canonical path argument. 
We believe that our results nevertheless serve as a valuable first step towards characterizing mixing of Bayesian Additive Regression Trees which have proven so useful in practice.

\pagebreak

\tableofcontents


\section{Bayesian CART Algorithm} \label{sec:algorithm_bcart}The algorithmic description of the original Bayesian CART (dyadic version) is in Algorithm \ref{alg:original}.
\begin{figure}[!ht]
\centering
\begin{minipage}{\linewidth}
\begin{algorithm}[H]
\caption{{\em Original Bayesian CART (Dyadic Version).}}\label{alg:original}
\centering
\spacingset{1.1}
\small
\resizebox{\linewidth}{!}{
\begin{tabular}{l l}
\hline
\multicolumn{2}{c}{\bf Input \cellcolor[gray]{0.6} }\\ 
\hline
\multicolumn{2}{l}{\textbf{Input}: The maximum iteration number $T_{max}$, the initial tree $\mT^0$, the posterior $\Pi(\mT\C Y)$}\\
\hline
\multicolumn{2}{c}{\bf Sampling \cellcolor[gray]{0.6}}\\ 
\hline
\multicolumn{2}{l}{For $i=1,...,T_{max}$}\\
\multicolumn{2}{l}{\qquad Sample $u_i\sim Unif(0,1)$}\\
\multicolumn{2}{l}{\qquad If $u_i>0.5$ or $\mT^i=\mT_{null}$, propose a new candidate tree by GROW}\\
\multicolumn{2}{l}{\qquad Else, propose a new candidate tree $\wt\mT$ by PRUNE}\\
\multicolumn{2}{c}{ \cellcolor[gray]{0.9}{\bf GROW}  }\\
\multicolumn{2}{l}{\qquad Randomly pick a terminal node $(l^*,k^*)\in\mT^i_{ext}$. }\\
\multicolumn{2}{l}{\qquad Split $(l^*,k^*)$ into two daughter nodes by splitting the interval $I_{lk}$ at a dyadic rational midpoint\footnote{This can be extended to the fullblown original version by first choosing a direction and a split point uniformly.} by }\\
\multicolumn{2}{l}{\qquad \qquad $\wt\mT_{int}\leftarrow\mT^i_{int}\cup\{(l^*,k^*)\}$}\\
\multicolumn{2}{l}{\qquad \qquad $\wt\mT_{ext}\leftarrow\mT^i_{ext}\backslash\{(l^*,k^*)\}\cup\{(l^*+1,2k^*),(l^*+1,2k^*+1)\} $}\\
\multicolumn{2}{l}{\qquad Set $\mT^{i+t}=\tilde{\mT}$ with probability $\alpha(\mT^i,\wt\mT)=\min\left\{1, \frac{\Pi(\wt\mT\C Y)|\mT^i_{ext}|}{\Pi(\mT^i\C Y)|\mP(\wt\mT)|}\right\}$}\\
\multicolumn{2}{c}{ \cellcolor[gray]{0.9}{\bf PRUNE}  }\\
\multicolumn{2}{l}{\qquad Randomly pick a parent of two terminal nodes $(l^*,k^*)\in\mP(\mT^i)$.}\\
\multicolumn{2}{l}{\qquad Collapse the nodes below it and turn it into a terminal node by}\\
\multicolumn{2}{l}{\qquad \qquad $\wt\mT_{int}\leftarrow\mT^i_{int}\backslash\{(l^*,k^*)\}.$}\\
\multicolumn{2}{l}{\qquad \qquad $\wt\mT_{ext}\leftarrow\mT^i_{ext}\backslash\{(l^*+1,2k^*),(l^*+1,2k^*+1)\}\cup\{(l^*, k^*)\}.$}\\
\multicolumn{2}{l}{\qquad Set $\mT^{i+t}=\tilde{\mT}$ with probability $\alpha(\mT^i,\wt\mT)=\min\left\{1, \frac{\Pi(\wt\mT\C Y)|\mP(\mT^i)|}{\Pi(\mT^i\C Y)|\wt\mT_{ext}|}\right\}$}
\end{tabular}}
\end{algorithm}
\end{minipage}
\end{figure}


\section{Proof of Theorem \ref{lemma:consist} (Establishing Consistency)}\label{sec:proof_lemma_consist}
We assume that the truth $f_0$ is a step function  as in Assumption \ref{ass:f_classic} (a) or (b) with signals $\mathcal B(A)\equiv \{(l,k): { C_{f_0}>}|\beta^*_{lk}|> A\log n/\sqrt{n} \}\subseteq \{(l,k):l<L\}$. Recall that $\mT^*$ is the smallest tree that includes $\mathcal B(A)$ as internal nodes and $\mT_{full}^L=\{(l,k): l<L\}$ is the full tree up to depth $L$.
Recall the $(n\times p)$ Haar wavelet regression matrix $\bm X$ with wavelets up to the maximal resolution $L_{max}$ (i.e. $p=n/2$). We will work conditionally on the event space $\mA_n$ defined as
\begin{equation}
\mA_n\equiv\{\bm\varepsilon: \|\bm X'\bm\varepsilon\|_\infty\leq 2\|\bm X\|\sqrt{\log p}\},\label{eq:event_set}
\end{equation} 
where $\|\bm X\|=\max\limits_{1\leq j\leq p}\|X_j\|_2.$ It is known  that  $\P(\mA_n^c)\leq 2/ p=4/n\rightarrow 0.$

We split the set of eligible  trees $\bT=\bT_L$  into 
$$
\bT=\mT^*\cup \bT_U \cup \bT_O,
$$ 
where $\bT_U=\{\mT\in\bT: \mathcal \mT^*\not\subseteq \mT \}$ are all under-fitted trees that miss at least one internal node inside $\mT^*_{int}$ and
$\bT_O=\{\mT\in\bT: \mathcal \mT^*\subset \mT \}$ are all over-fitted trees that include inside at least one redundant internal node  in $\mT_{full}\backslash \mT^*_{int}$.  We show below that on the event $\mA_n$ we have $\Pi[\bT_O\C Y]=o(1)$ and $\Pi[\bT_U\C Y]=o(1)$ for $c>5/2$.

\subsubsection{Trees do not overfit.}\label{sec:do_not_overfit}
We decompose the overfitted set $\bT_O=\bigcup_{K=1}^{2^L} \Lambda(\mT^*,K)$ into shells depending on how many extra internal nodes the overfitted tree $\mT\in\bT_O$ has relative to $\mT^*$,
where
$$
 \Lambda(\mT^*,K)=\{\mT\in\bT_O:   |\mT_{int}|-|\mT^*_{int}|=K\}.
$$
We can write
\begin{equation}
\frac{\Pi[ \Lambda(\mT^*,K)\C Y]}{\Pi(\mT^*\C Y)}= {\sum_{\mT\in  \Lambda(\mT^*,K)}\frac{\Pi(\mT)N_\mT(Y)}{\Pi(\mT^*)N_{\mT^*}(Y)}} \label{eq:ratio-post}
\end{equation}
where the  marginal likelihood ratio can be written as  (using the expression in \eqref{eq:compare_post})
$$
\frac{ N_\mT(Y)}{ N_{\mT^*}(Y)}=(1+n)^{-K/{2}}\exp\left\{ \frac{1}{2(n+1)} Y'[X_{\mT} X_{\mT}' -X_{\mT^*} X_{\mT^*}']Y\right\}.
$$
For $\mT\in \Lambda(\mT^*,K)$
we denote with $\mT^0\equiv\mT^*\rightarrow\mT^1\rightarrow\dots\rightarrow\mT^K\equiv\mT$ the sequence of nested trees obtained from $\mT^*$ by growing one internal node (at a depth $l_j$) at a time towards  reaching $\mT$. We will use a shorthand notation $p_l=p_{lk}$ for the split probability.
The prior ratio of two consecutive trees in this sequence satisfies
$$
\frac{\Pi(\mT^{j})}{\Pi(\mT^{j-1})}=\frac{p_{l_j}}{1-p_{l_j}}\times (1-p_{l_j+1})^2
$$
Then  we find
\begin{align} 
\frac{\Pi(\mT)N_Y(\mT)}{\Pi(\mT^*)N_{Y}(\mT^*)}&=(1+n)^{-K/2}\prod_{j=1}^K\frac{p_{l_j}}{1-p_{l_j}}\times (1-p_{l_j+1})^2\times\exp\left\{ -\frac{ Y'(P_{ {j-1}}-P_{j})Y}{2(n+1)} \right\}\notag\\
&=(1+n)^{-K/2}\prod_{j=1}^K\frac{p_{l_j}}{1-p_{l_j}}\times (1-p_{l_j+1})^2\times \exp\left\{ \frac{  |X_{[j]}' Y|^2}{2(n+1)} \right\}, \label{eq:ratio_post_overfit}
\end{align}
where 
$$
P_j= X_{\mT^j} X_{\mT^j}'=P_{j-1}+X_{[j]}X_{[j]}' 
$$
and where $X_{[j]}$ is the column added at the $j^{th}$ step of  branch growing. Since $\mT^*_{int}$ contains {\em all} signals, we have $\b_{\bmT^*}^*=\bm0$.
Then for any $j=1,\dots, K$ we have on the event $\mathcal A_n$ (since $\bm \nu=\bm\epsilon$ under Assumption \ref{ass:f_classic} and due to orthogonality of $X$)
$$
 |X_{[j]}' Y|=|X_{[j]}'(X_{\mT^*}\b^*_{\mT^*}+X_{\bmT^*}\b^*_{\bmT^*}+\bm\nu )|\leq { 2}\sqrt{n\log n}
$$
Using $p_l=p_{lk}=n^{-c}<1/2$ we obtain   
\begin{equation}
\frac{\Pi(\mT)N_Y(\mT)}{\Pi(\mT^*)N_{Y}(\mT^*)} \leq \exp\left(-\frac{K}{2} \log(1+1/n)-K\, (c-3/2) \log n \right)\leq \e^{- K(c-3/2)\log n}.\label{eq:ratio_post_overfit2}
\end{equation}
Noting that the cardinality of $\Lambda(\mT^*,K)$ can be for each $K$ bounded by
$$
\mathrm{card}[\Lambda(\mT^*,K)]\leq\prod_{j=1}^{K}(|\mT^*_{ext}|+j-1)
$$
we find an upper bound for \eqref{eq:ratio-post}
\begin{equation}
\frac{\Pi[ \Lambda(\mT^*,K)\C Y]}{\Pi(\mT^*\C Y)}\leq (|\mT^*_{ext}|+K-1)^K \e^{-K(c-3/2)\log n}.\label{eq:bound_overtit_by_k}
\end{equation}
Since
\begin{align}
\frac{ \Pi(\bT_O\C Y)}{\Pi(\mT^*\C Y)}&\leq \sum_{K=1}^{2^L}
  \frac{\Pi(\Lambda(\mT^*,K)\C Y)}{\Pi(\mT^*\C Y)}< \sum_{K=1}^{2^L}\e^{ K\log [(|\mT^*_{ext}|+K-1)]}  \e^{-K \,(c-3/2)\,\log n}\notag\\
& <\sum_{k=1}^{2^L} \e^{-K(c-5/2)\log n}\leq{n^{5/2-c}\frac{1-[n^{(5/2-c)}]^{n/2}}{1-n^{5/2-c}}}<\frac{1}{n^{c-5/2}-1}\label{eq:bound_overfit}
\end{align}
we obtain that $\Pi[\bT_O\C Y]< \frac{1}{n^{c-5/2}-1}$.

\subsubsection{Trees do not underfit.}\label{sec:do_not_underfit}

We now show that the probability of trees that miss at least one signal goes to zero. In particular,  we show  that (on the event $\mA_n$)  we have   
\begin{equation}\label{eq:catch_signal_np}
\Pi\left[ \mT\in\bT: \mathcal B(A)\not\subseteq \mT_{int} \C Y\right]\rightarrow 0\quad\text{as $n\rightarrow\infty$}
\end{equation}
for
\begin{equation}
\mathcal B(A)\equiv \{(l,k): { C_{f_0}> }|\beta_{lk}^*|>A\log n/\sqrt{n}\}\label{eq:setSnew}.
\end{equation}
The proof of \eqref{eq:catch_signal_np} follows the route of Lemma 3 in   \citep{castillo2021uncertainty} and Section 9.0.2 in \cite{rockova2021ideal}. 
For simplicity, we have focused in this work on the regular design case where the regression matrix is orthogonal and thereby $\Sigma_{\mT}=c_n (X_{\mT}'X_{\mT})^{-1}=\frac{1}{n+1}I_{|\mT_{ext}|}$. Suppose that 
$ (l_S,k_S)\in \mathcal B(A)$
is a signal node for some $A>0$ and let $\mT$ be such that $(l_S,k_S)\notin\mT$. 
We grow a branch from $\mT$ that extends towards $(l_S,k_S)$ to obtain an enlarged tree $\mT^+\supset\mT$. In other words $\mT^+$ is the smallest tree that contains $\mT$ and $(l_S,k_S)$ as an internal node. For details, we refer to Lemma 3 in \citep{castillo2021uncertainty}.
 We define $K=|\mT^+_{int}\backslash \mT_{int}|$ and write  (using the expression in \eqref{eq:compare_post})
\begin{equation}\label{eq:ratio2}
\frac{N_Y(\mT)}{N_{Y}(\mT^+)}=(1+n)^{K/{2}}\exp\left\{ \frac{1}{2(n+1)} Y'[X_\mT X_\mT' -X_{\mT^+} X_{\mT^+}' ]Y\right\}.
\end{equation}
We denote with $\mT^0=\mT\rightarrow\mT^1\rightarrow\dots\rightarrow\mT^K=\mT^+$ the sequence of nested trees obtained by adding one additional internal node $(l_j,k_j)$ towards $(l_S,k_S)$.
Then  we find
\begin{align} 
\frac{N_Y(\mT)}{N_{Y}(\mT^+)}&=(1+n)^{K/2}\prod_{j=1}^K\exp\left\{ \frac{ Y'(P_{ {j-1}}-P_{j})Y}{2(n+1)} \right\}\notag\\
&=(1+n)^{K/2}\prod_{j=1}^K\exp\left\{- \frac{  |X_{[j]}' Y|^2}{2(n+1)} \right\},\label{eq:posterior_ratio_overfit}
\end{align}
where $P_j= X_{\mT^j} X_{\mT^j}'=P_{j-1}+X_{[j]}X_{[j]}' $
and where $X_{[j]}$ is the column added at the $j^{th}$ step of  branch growing.
Let $X_{[K]}$ be the {\em last} column to be added to $X_{\mT^+}$, i.e. the {\em signal} column associated with $(l_S,k_S)$. We will be denoting simply $\beta_{[K]}^*\equiv \beta^*_{(l_S,k_S)}$ the coefficient associated with $X_{[K]}$.
Then (from the orthogonality of $X$)
$$
|X_{[K]}' Y|^2=|X_{[K]}' X_{[K]}\beta_{[K]}^*   +X_{[K]}' \bm \nu|^2
$$
Using the inequality $(a+b)^2\geq a^2/2-b^2$   we find that
$$
{|X_{[K]}'Y|^2} \geq n^2|\beta_{[K]}^*|^2/2 -  |X_{[K]}' \bm \nu|^2.
$$
On the event $\mathcal A_n$ and using the fact that $F_0-X\b^*=0$ under the step function Assumption  \ref{ass:f_classic} we find that 
$$
|X_{[K]}' \bm \nu| = |X_{[K]}'\bm  \varepsilon |\leq { 2}\sqrt{n\log n}
$$  
which yields 
$$
\frac{ |X_{[K]}' Y|^2}{2(n+1)}\geq \frac{n^2|\beta_{[K]}^*|^2}{4(n+1)}- \frac{{ 4}n\log n}{2(n+1)}.
$$
From the signal assumption $(l_S,k_S)\in\mathcal B(A)$, we have $|\beta_{[K]}^*|>A\log n/\sqrt{n}$ for some $A>0$ and thereby 
\begin{equation}
\frac{N_Y(\mT)}{N_{Y}(\mT^+)}\leq \exp\left\{\frac{K}{2}\log (1+ n)- \frac{  nA^2\log^2 n }{4(n+1)} +  \frac{{ 4}n\log n}{2(n+1)}\right\}  
\end{equation}
The prior ratio satisfies  (using again the notation $p_l=p_{lk}$)
\begin{equation}
\frac{\Pi(\mT)}{\Pi(\mT^+)}=\frac{1-p_{l_0}}{p_{l_0}}\times\left(\prod_{j=1}^{K-1}\frac{1}{p_{l_j}(1-p_{l_j})}\right)\times\frac{1}{(1-p_{l_K})^2}.\label{eq:prior_ratio_twig}
\end{equation}
Defining
$$
b(n)\coloneqq \frac{\Pi(\mT\C Y)}{\Pi(\mT^+\C Y)}
$$
with $p_l=p_{lk}=n^{-c}<1/2$, we have  $\Pi(\mT)/\Pi(\mT^+)\leq 2^{K}\e^{c\,K\log n}$, and thereby  (since $K\leq L\leq L_{max}=\log_2 [n/2]$) 
\begin{align}
b(n) &\leq 2^{K} \exp\left\{c\,K\log n+ \frac{K}{2}\log (1+ n)- \frac{  nA^2\log^2 n }{4(n+1)} +  \frac{{ 4}n\log n}{2(n+1)}\right\}.\label{eq:posterior_ratio_overfit2}
\end{align}
Following the proof technique in Lemma 2 in  \citep{castillo2021uncertainty} we conclude that {for some sufficiently large $A>0$ }
$$
\Pi[(l_S,k_S)\notin \mT_{int}\C Y]\leq l_S\times b(n)\leq \e^{ -A^2/4\log^2 n}.
$$
Thereby,  
$$
\Pi[\mathcal B(A)\not\subseteq\mT_{int}\C Y]\leq \sum_{(l_S,k_S)\in \mathcal B(A)}\Pi[(l_S,k_S)\notin \mT_{int}\C Y]\leq \e^{ -A^2/4\log^2 n}2^{L}\leq \e^{-A^2/8\log^2 n} \rightarrow 0.
$$
This concludes the proof of \eqref{eq:catch_signal_np}. Because $\mT^*$ is the minimal tree that contains $\mathcal B(A)$ as its internal nodes, this implies 
$\Pi[\mT\in\bT: \mathcal \mT^*\not\subseteq \mT\C Y]=\Pi[\bT_{U}\C Y]=o(1)$.

\section{Proof of Theorem \ref{theo:poor} (Bayesian CART Mixing Lower Bound)}\label{pf:poor_lower}
We  assume  that the true signal  $f_0(x)=\psi_{l*k*}(x) $ consists of just one deepest leftmost  wavelet coefficient  with {$0<l^*< L$} and $k^*=0$, where $|\beta^*_{l^*k^*}|>A\log n/\sqrt{n}$ according to Assumption \ref{ass:f_classic} (b). Figure \ref{fig:motiv} (a) illustrates a special case when $l^*=3$. During the proof, we take advantage of the bottleneck ratio bound  \citep{sinclair1992improved} 
\begin{equation}\label{eq:lb}
Gap(P)\leq 2\Phi,\quad\text{where}\quad\Phi=\min\limits_{\substack{A\subset\bT \\ 0<\Pi[A\C Y]\leq 1/2}}\frac{\sum_{\mT\in A,{\mT'}\in \bT\backslash A}\Pi(\mT\C Y)P(\mT,{\mT'})}{\Pi[A\C Y]}
\end{equation}
is the conductance which measures the ability of the chain to escape from any small region of the state space (and make a rapid progress to the equilibrium).

We now choose $A\subset\bT$ that gives a small value of the ratio inside the minimum in \eqref{eq:lb}, thereby providing a small upper bound of the conductance. Intuitively, among   trees without  the signal,
 the posterior is smaller for deeper trees. Recall that in Bayesian CART, the transition probability is non-zero only between trees that differ by one internal node. The signal  node $(l^*,k^*)$ is only reachable from trees that include $(l^*-1,0)$. The set of  trees that include $(l^*-1,0)$ thus comprises a bottleneck between trees that capture the signal node  $(l^*,0)$ and those that do not. Using this intuition, we will calculate the bottleneck ratio w.r.t. 
\begin{equation}
A_{\backslash (l^*-1,0)}\equiv\{\mT\in\mathbb{T}\C (l^*-1,0)\not\in \mT_{int}\}\label{eq:Aset}
\end{equation}
to  bound the conductance. Note that $\Pi[A_{\backslash (l^*-1,0)}\C Y]< 1/2$ since the posterior is concentrated to the true tree with $(l^*,0)$ (See, Lemma \ref{lemma:consist})\footnote{By consistency established in Lemma \ref{lemma:consist}, on the event space $\mA_n$ {with $c>5/2$}, we have $\Pi(\mT^*\C Y)> 1/2$ {with probability at least $1-4/n$ when the signal is large enough}. Since $\mT^*\not\in A_{\backslash (l^*-1,0)}$, we have $\Pi[A_{\backslash (l^*-1,0)}\C Y]\leq 1/2$.}. A tree $\mT\in A_{\backslash (l^*-1,0)}$ must contain $(l^*-2,0)$ to have a non-zero transition probability $P(\mT, {\mT'})$ for ${\mT'}\in A_{\backslash (l^*-1,0)}^c$. Therefore, denoting 
$$
 B_{l^*-1}\equiv A_{\backslash (l^*-1,0)}\cap \{\mT\in\mathbb{T}\C (l^*-2,0)\in \mT_{int}\},
$$ the bottleneck ratio w.r.t. $A_{\backslash (l^*-1,0)}$   bounds  the conductance  from above simply by 
\begin{equation}
\Phi\leq\frac{\sum_{\mT\in  B_{l^*-1}}\Pi(\mT\C Y)P(\mT,A_{\backslash (l^*-1,0)}^c)}{\Pi[A_{\backslash (l^*-1,0)}\C Y]}\leq \frac{\Pi[  B_{l^*-1}\C Y]}{\Pi[A_{\backslash (l^*-1,0)}\C Y]}.\label{eq:simple_bottle}
\end{equation}

We now show that the tightest upper bound in \eqref{eq:simple_bottle} is obtained when $l^*=L-1$. Namely, we first derive a bound w.r.t. a general $l^*$, and show that  the bound in \eqref{eq:simple_bottle}   becomes smaller as $l^*$ increases. We will  work conditionally  on the set $\mA_n$ defined in \eqref{eq:event_set}. {Recall the definition of $\mT^*$ from Assumption \ref{ass:f_classic} (b)}  as the minimal  tree that contains the signal $\mathcal B=\{(l^*,0)\}$.

To bound the ratio in \eqref{eq:simple_bottle}, we decompose $A_{\backslash (l^*-1,0)}$ into $l^*$ disjoint subsets that contain the leftmost node at a certain level and exclude the leftmost node at the next level:
$$
B_{i} \equiv\{\mT\in\mathbb{T}\C (i-1,0)\in \mT_{int},  (i,0) \not\in \mT_{int}\}\quad\text{for}\quad i=0,1,\dots, l^*-1.\label{eq:b_set}
$$
It is easy to see that $A_{\backslash (l^*-1,0)} = \bigcup_{i=0}^{l^*-1} B_i$, so that 
\begin{equation}
 \Pi[A_{\backslash (l^*-1,0)}\C Y]  = \sum_{i=0}^{l^*-1}\Pi[B_i\C Y].\label{eq:denominator_decomp}
\end{equation}
Therefore, the bound in \eqref{eq:simple_bottle} can be rewritten as
\begin{equation*}\label{eq:intermed}
\Phi\leq \frac{\Pi[B_{l^*-1}\C Y]}{\Pi[A_{\backslash (l^*-1,0)}\C Y]}=\frac{\Pi[B_{l^*-1}\C Y]}{\Pi[B_{l^*-1}\C Y]+\sum_{i=0}^{l^*-2}\Pi[B_i\C Y]}.
\end{equation*}

To see how small $\Pi[B_{l^*-1}\C Y]$ is compared to $\sum_{i=0}^{l^*-2}\Pi[B_i\C Y]$, we will scrutinize the posterior ratio $\Pi[B_{i-1}\C Y]/\Pi[B_{i}\C Y]$ for each 
$B_i$. We first characterize a simple relationship between $B_i$ and $B_{i-1}$ where each tree in $B_i$ can be obtained by attaching a ``mini-tree"  to some tree inside $B_{i-1}$. For each $\mT\in B_i$, we denote with $M(\mT)$ the operator that removes all descendants $D_{i-1, 0}(\mT)$ of the node $(i-1,0)$, i.e. for $\mT'=M(\mT)$
$$
\mT'_{int}=\mT_{int}\backslash  D_{i-1, 0}(\mT).
$$
The mapping $M(\cdot)$ is many to one and for each $\mT'\in B_{i-1}$
we denote 
$$
\mathcal N(\mT')=\{\mT\in B_i: \mT'=M(\mT)\}
$$
the nonempty set of those $\mT\in B_i$ that map onto the same $\mT'$. Then we can write
$$
\Pi(B_i\C Y)=\sum_{\mT\in B_i}\frac{\Pi(\mT\C Y)}{\Pi(M(\mT)\C Y)}\Pi(M(\mT)\C Y)=\sum_{\mT'\in B_{i-1}}\Pi(\mT'\C Y)
\sum_{\mT\in \mathcal N(\mT')}
\frac{\Pi(\mT\C Y)}{\Pi(\mT'\C Y)}.
$$
Each tree $\mT\in \mathcal N(\mT')$ differs from $\mT'$ by addition of at least one node without signal. 
We decompose $\mathcal N(\mT')=\cup_{K=1}^{2^{L_{max}}} \mathcal N(\mT',K)$ into shells according to how many extra noise nodes the trees have, 
where 
$
 \mathcal N(\mT',K)=\{\mT\in \mathcal N(\mT'): |\mT_{int}\backslash \mT'_{int}|=K \}.
$
We can use the posterior ratio expression 
\eqref{eq:ratio_post_overfit2} for nested models to conclude that  for any $\mT\in  \mathcal N(\mT',K)$ we have
$$
\frac{\Pi(\mT\C Y)}{\Pi(\mT'\C Y)}\leq  \e^{- K(c-3/2)\log n}
$$
The cardinality of the set $\mathcal N(\mT',K)$ is at most the number of all binary trees with  $K$ nodes. This corresponds to the Catalan number   $\mathbb C_{K}$, which
according to Lemma  S-3 in \citep{castillo2021uncertainty}, satisfies
$\mathbb C_{K}\asymp 4^{K}/{K}^{3/2}$.
Then
$$
\Pi(B_i\C Y)\lesssim  \sum_{\mT'\in B_{i-1}}\Pi(\mT'\C Y)
 \times \sum_{K=1}^{n/2}  4^K\e^{- K(c-3/2)\log n}\leq \frac{1}{ {n^{(c-3/2)}}/4-1}\times \Pi(B_{i-1}\C Y).
$$
Denoting with $\gamma_n= \frac{C}{n^{(c-3/2)}/4-1}$ the ``shrinkage factor" for some $C>1$, the posterior of $A_{\backslash (l^*-1,0)}$ satisfies 
\begin{align}
 \Pi[A_{\backslash (l^*-1,0)}\C Y]  &= \sum_{i=0}^{l^*-1}\Pi[B_i\C Y] \geq \Pi[B_{l^*-1}\C Y] \sum_{i=0}^{l^*-1}\left(\frac{1}{\gamma_n}\right)^i\nonumber \\&= \Pi[B_{l^*-1}\C Y] \frac{\gamma_n}{1-\gamma_n}\left[\left(\frac{1}{\gamma_n}\right)^{l^*} -1\right]. \label{eq:denominator_final}
\end{align}
Therefore, it follows from \eqref{eq:denominator_final} and \eqref{eq:simple_bottle} that
\begin{align*}
\Phi^{-1}&\geq\frac{\Pi[A_{\backslash (l^*-1,0)}\C Y]}{\Pi[B_{l^*-1}\C Y]} 
\geq \frac{\gamma_n}{1-\gamma_n}\left[\left(\frac{1}{\gamma_n}\right)^{l^*} -1\right]\\
&\geq  \frac{C}{n^{(c-3/2)}/4-C}\left[\left(\frac{n^{(c-3/2)}/4-1}{C}\right)^{l^*}-1\right]>\left(\frac{n^{(c-3/2)}/4-1}{C}\right)^{l^*-1}-1
\end{align*}
where we used the fact that $\gamma_n> 4C/n^{(c-3/2)}$ and that $C/ (n^{(c-3/2)}/4-1)<1$ for large enough $n$.
Therefore, we have
\[\frac{1}{Gap(P)}\,\,\geq \,\,\frac{1}{2\Phi} \,\,{\geq}\,\, \frac{1}{2} \left(\left(\frac{n^{(c-3/2)}/4-1}{C}\right)^{l^*-1}-1\right).\] As this quantity increases with $l^*$,  the maximum is reached for $l^*=L-1$. 
By applying the mixing time lower bound in \eqref{eq:sandwich} using the spectral gap, we obtain
$$
\tau_{\epsilon}\geq \log\left(\frac{1}{2\epsilon}\right)\frac{1}{2}\left[\frac{1}{Gap(P)}-1\right]> \log\left(\frac{1}{2\epsilon}\right)\frac{1}{4} \left[\left(\frac{n^{(c-3/2)}/4-1}{C}\right)^{L-2} -3\right].
$$

\section{Proof of Theorem \ref{eq:theo1} (Bayesian CART Mixing Upper Bound)}\label{pf:theo1}

\subsection{Proof of Lemma \ref{lem:classic_mE}} \label{sec:lem_proofs1}
We want to upper bound  the length of the longest canonical path constructed in Section \ref{sec:canonical}.
Let us first  bound $|T_{\mT,\mT^*}|$ when $\mT \supset\mT^*$.  In order to reach $\mT^* $ from $\mT$ on a canonical path, we remove  one redundant node at a time. There are at most $2^L$ nodes of which
$(2^L-|\mT_{int}^*|)$ are redundant. 
Thereby, we have $\max\limits_{\mT:\mT\supset\mT^*}\{|T_{\mT,\mT^*}|\}\leq
(2^L-|\mT_{int}^*|)$.  Conversely, for any $\mT\subset\mT^*$, the canonical path from $\mT$ towards $\mT^*$ adds one node in $\mT^*_{int}\backslash\mT_{int}$ at a time. 
This means $\max\limits_{\mT:\mT\subset\mT^*}|T_{\mT,\mT^*}|\leq
 |\mT_{int}^*|$. When $\mT\not\subset\mT^*$ and $\mT\not\supset \mT^*$, the path from $\mT$ towards $\mT^*$ follows by first deleting redundant nodes and then adding nodes towards reaching $\mT^*$. This can be achieved in at most $(2^L-|\mT^*_{int}|+|\mT^*_{int}|)$ steps. Finally, for any two trees $\mT,\mT'\in\bT$ the canonical path $T_{\mT,\mT'}$ is obtained by collapsing $T_{\mT,\mT*}$ and $ \bar{T}_{\mT',\mT^*}$.
Thereby, we have $\max_{\mT,\mT'\in\bT}|T_{\mT,\mT'}|\leq  2^{L+1}$.

\subsection{Proof of Lemma \ref{lemma:conduct}}\label{sec:lem2_proof}

We will work conditionally on the set $\mA_n$ defined in \eqref{eq:event_set}, where $p=2^{\Lmax}=n/2$. We know that the complement of this set has a vanishing probability $\P(\mA_n^c)\leq 2/ p\rightarrow 0$. 
We denote by $T_{\mT,\mT'}\in\mathcal E$ a canonical path between two nodes $\mT,\mT'\in\bT$. We will  find an upper bound for the congestion parameter $\rho(\mE)$ defined in \eqref{eq:congestion} as
$$
  \rho(\mE)=\max_{e\in\mE}\frac{1}{Q(e)}\sum_{(\bar \mT,\bar {\mT}'):e\in T_{\bar \mT,\bar {\mT}'}}\Pi(\bar \mT\C Y)\Pi(\bar {\mT}'\C Y),
$$
where for an edge $e$ between $(\mT,\mT')$ we have 
$$
Q(e)\equiv Q(\mT, \mT')=\Pi(\mT\C Y)P(\mT,\mT').
$$
First, we denote with 
\begin{equation}\label{eq:precedent}
\Delta(\mT')=\{{\mT}:\mT'\in T_{\mT,\mT^*}\}
\end{equation}
a set of precedents of a tree $\mT'$ that lie on a canonical path towards $\mT^*$. Note that $\mT'\in\Delta(\mT')$. For any given  edge $e_{\mT,\mT'}=T_{\mT,\mT'}$ between two {\em adjacent} trees     $\mT$  and  $\mT'$ where $\mT\in\Delta(\mT')$, we have
$$
N(e)\equiv\{(\bar \mT,\bar {\mT}')\C e\in T_{\bar\mT,\bar{\mT}'}\}\subset\Delta(\mT)\times\bT.
$$ 
Then we can find an upper bound for the congestion parameter in \eqref{eq:congestion} as 
\begin{equation}\label{eq:bound_rho}
\rho(\mE)\leq \max_{e_{\mT,\mT'}\in\mE}\frac{\Pi[\Delta(\mT)]}{Q(e_{\mT,\mT'})} \leq  \max_{(\mT,\mT')\in\bG^*}\frac{\Pi[\Delta(\mT)]}{Q(\mT,\mT')},
\end{equation}
where 
$$
\bG^*=\{(\mT,\mT')\in\bT\times\bT\C e_{\mT,\mT'}=T_{\mT,\mT'}\quad\text{and}\quad \mT\in\Delta(\mT')\}
$$
Now we find a lower bound for $Q(\mT,\mT')$ for an {\em adjacent} pair $(\mT,\mT')$ such that $\mT\in\Delta(\mT')$ or, equivalently, for $\mT'=\mG(\mT)$, where $\mG(\cdot)$ is the mapping introduced in Section \ref{sec:canonical}. For the ``lazy"  walk explained in Section \ref{sec:mixing} with a transition matrix $P=\wt P/2+I/2$ where $\wt P$ is the original transition matrix, we have
\begin{align*}
Q(\mT,\mT')&=\frac{1}{2}\Pi(\mT\C Y)\wt P(\mT,\mT')=\frac{1}{2}\Pi(\mT\C Y)S(\mT\rightarrow\mT') \min\left\{1, \frac{\Pi(\mT'\C Y)S(\mT'\rightarrow\mT)}{\Pi(\mT\C Y)S(\mT\rightarrow\mT')}\right\}.
\end{align*}
Plugging this into \eqref{eq:bound_rho} we obtain
\begin{equation}\label{eq:ratios}
\rho(\mE)\leq  2 \max_{(\mT,\mT')\in\bG^*} \left\{\frac{\Pi[\Delta(\mT)]}{\Pi(\mT\C Y)S(\mT\rightarrow\mT')}\times \max\left[1, \frac{\Pi(\mT\C Y)S(\mT\rightarrow\mT')}{\Pi(\mT'\C Y)S(\mT'\rightarrow\mT)}\right]\right\}.
\end{equation}
We now bound the ratio $\rho(\mE)$ assuming that $\mT$ is either {\em overfitting} or {\em underfitting}. We continue using the notation $\bT_O=\{\mT: \mT\supset\mT^*\}$ and $\bT_U=\{\mT: \mT\not\supset\mT^*\}$.



\subsubsection{When $\mT^*\subset \mT$ (The Overfitted Case)} 
When $\mT\in\bT_O$ subsumes the tree $\mT^*$, the mapping $\mG(\cdot)$  picks the deepest rightmost internal node, say $(l_S,k_S)\in\mT_{int}\backslash \mT^*_{int}$,  and turns it into a bottom node.
We denote with $\mT^-=\mG(\mT)$ such a pruned tree. We also define the collection of {\em pre-terminal nodes} of a tree $\mT\in\bT$ as
$$
\mP(\mT)=\{(l,k)\in\mT_{int}: \{(l+1,2k),(l+1,2k+1)\}\in\mT_{ext}\},
$$ 
 i.e. these are  internal nodes whose children are the bottom nodes. Using the posterior ratio expression in \eqref{eq:ratio_post_overfit} and \eqref{eq:ratio_post_overfit2} for overfitted trees with $K=1$ we obtain (for $j=2^{l_S}+k_S+1$)
$$
\frac{\Pi(\mT\C Y)}{\Pi(\mT^-\C Y)} =(1+n)^{-1/2} \frac{p_{l_S }}{1-p_{l_S }}\times (1-p_{l_S+1})^2\times \exp\left\{ \frac{  |X_{[j]}' Y|^2}{2(n+1)} \right\}\leq  \e^{-(c-3/2)\log n}
$$
{ Since we cannot preclude that $\mT = \mT_{full}^L$,} the proposal ratio satisfies
$$
\frac{S(\mT\rightarrow\mT^-)}{S(\mT^-\rightarrow\mT)}\leq  \frac{2|\mT^-_{ext}|}{|\mP(\mT)|}<4.
$$ 
Then   the ratio inside the Metropolis-Hastings acceptance probability  in \eqref{eq:ratios} satisfies 
\begin{equation}
\frac{\Pi(\mT\C Y)S(\mT\rightarrow{ \mT^-})}{\Pi({\mT^-}\C Y)S({\mT^-}\rightarrow\mT)}\leq 4\e^{ -(c-3/2)\log n}=o(1)\quad\text{for $ c>3/2$}.\label{eq:ratio_acceptance1}
\end{equation}
We now focus on the second  ratio in the product in \eqref{eq:ratios}.
When $\mT^*\subset\mT$,  all precedents ${\mT'}\in\Delta(\mT)$ (recall the definition of $\Delta(\mT)$ in \eqref{eq:precedent}) are also {\em overfitted} models, i.e. $\mT^*\subset{\mT'}$ and $\mT\subset\mT'$ for all ${\mT'}\in\Delta(\mT)\backslash\{\mT\}$.
We decompose $\Delta(\mT)=\cup_{K=1}^{2^L}\Delta(\mT,K)$ into shells depending how many steps away each tree $\mT'\in\Delta(\mT)$ is on a canonical path towards $\mT$.
Namely, for $K\in\N$, we denote with  
\begin{equation}
\Delta(\mT,K)=\{{\mT'}\in \Delta(\mT): |T_{\mT',\mT}|=K\}\label{eq:delta_shell}
\end{equation}
the set of precedents that are $K$ steps away from $\mT$ on some canonical path. Using again the posterior ratio for overfitted models in  \eqref{eq:ratio_post_overfit} and \eqref{eq:ratio_post_overfit2}, we obtain
for $\mT'\in\Delta(\mT,K)$
$$
\frac{\Pi(\mT'\C Y)}{\Pi(\mT\C Y)}\leq \e^{-K(c-3/2)\log n}.
$$
Moreover, the cardinality of $\Delta(\mT,K)$  for $K\geq 1$ satisfies $\mathrm{card}[\Delta(\mT,K))]\leq\prod_{j=1}^{K}(|\mT_{ext}|+j-1)$ and $\mathrm{card}[\Delta(\mT,0))]=1$.
Thereby
\begin{align}
\frac{\Pi[\Delta(\mT)\C Y]}{\Pi(\mT\C Y)}&=1+ \sum_{K=1}^{2^L}
\sum_{{\mT'}\in\Delta(\mT,K)} \frac{\Pi({\mT'}\C Y)}{\Pi(\mT\C Y)}\nonumber\leq 1+\sum_{K=1}^{2^L}\e^{ K\log [(|\mT_{ext}|+K-1)]}  \e^{- (c-3/2)\,K \log n}\\
&<1+\frac{1}{n^{(c-5/2)}-1}\label{eq:frac}.
\end{align}
Finally, because $\mT\neq \mT_{null}$, we have from \eqref{eq:proposal}
\[S(\mT\rightarrow{\mT^-}) = \frac{1}{2} \frac{1}{|\mP(\mT)|}.\]
Then we obtain
$$
 \frac{\Pi[\Delta(\mT)\C Y]}{\Pi(\mT\C Y)S(\mT\rightarrow{\mT'})}\times \max\left[1, \frac{\Pi(\mT\C Y)S(\mT\rightarrow{\mT'})}{\Pi({\mT'}\C Y)S({\mT'}\rightarrow\mT)}\right] 
\leq 2|\mP(\mT)|\left(1+\frac{1}{n^{(c-5/2)}-1}\right).
$$

\subsubsection{When $\mT^*\not\subset \mT$ (The Underfitted Case)}\label{sec:canonical_underfit}
We consider two cases of underfitting: {(1) when $\mT\not\subset\mT^*$ and, at the same time, $\mT\not\supset\mT^*$ and (2) when $\mT\subset\mT^*$. } First, if the tree underfits and contains extra nodes, those are deleted first which coincides with the previous case.

We now focus on the second case when $\mT\subset\mT^*$. Then $\mG(\mT)$ proceeds by adding an additional node towards completing $\mT^*$. 
We denote the resulting enlarged tree by $\mT^+=\mG(\mT)$. Using the expression of posterior ratio in \eqref{eq:posterior_ratio_overfit} and  \eqref{eq:posterior_ratio_overfit2} with $K=1$ we find that
$$
\frac{S(\mT\rightarrow\mT^+)}{S(\mT^+\rightarrow\mT)}\frac{\Pi(\mT\C Y)}{\Pi(\mT^+\C Y)} \leq\frac{{ 2}|\mathcal P(\mT^+)|}{|\mT_{ext}|} \e^{-A^2/8\log^2 n}=o(1).
$$
Next, note that precedents $\Delta(\mT)$ in \eqref{eq:precedent} of an underfitted model $\mT$ are also underfitted  models and can be divided into two (besides the singleton set $\{\mT\}$) mutually exclusive categories, i.e. $\Delta(\mT)=\{\mT\}\cup \mU_1(\mT)\cup \mU_2(\mT)$.
The first set, denoted with $\mU_1(\mT)$, consists of all precedents   $\Delta(\mT)$ that are also subsets of $\mT^*$  , i.e. 
$\mU_1(\mT)\equiv \{{\mT'}\in\Delta(\mT):{\mT'}\subset\mT^*\}$. 
The second set, denoted with $\mU_2(\mT)$, are all the precedents that have some redundant nodes and are {\em not} included in $\mT^*$, i.e. $\mU_2(\mT)=\{{\mT'}\in\Delta(\mT): {\mT'}\not\subseteq\mT^*\}$.
We denote with $\Delta(\mT,K)\subset\Delta(\mT)$ those precedents that are $K$ steps away  from $\mT$ on a canonical path (i.e.  all trees inside $\mU_1(\mT)$ that have $K$ fewer internal nodes compared to $\mT$ 
and all trees inside $\mU_2(\mT)$ that have $K$ extra internal nodes compared to $\mT$), where the cardinality satisfies $\mathrm{card}[\Delta(\mT,K)]\leq 2^{LK}$. Because under the Assumption \ref{ass:f_classic} (a), {\em all} internal nodes in  $\mT^*$ are signals, we can modify the expressions in \eqref{eq:posterior_ratio_overfit} to include all $K$ signals (not just one) to obtain for large enough $A$
\begin{align}
\frac{\Pi[\mU_1(\mT)\C Y]}{\Pi(\mT\C Y)}&\leq\sum_{K=1}^{|\mT^*_{int}|}\sum_{{\mT'}\in\Delta(\mT,K)\cap\mU_1(\mT)}\frac{\Pi({\mT'}\C Y)}{\Pi(\mT\C Y)}
< \sum_{K=1}^{|\mT^*_{int}|}\e^{-KA^2/8 \log^2 n}<\frac{1}{n^{A^2/8\log n}-1}.\label{eq:bound_U1}
\end{align}
We now consider the second type of underfitting precedents  $\mU_2(\mT)$. For each such $\mT'\in \mU_2(\mT)$, the canonical path $T_{{\mT'},\mT}$ first proceeds by removing redundant nodes 
and at some point reaches a tree $U(\mT')$ which already underfits. In other words, $U(\mT')\in \mU_1$ is defined as the largest subtree obtained from ${\mT'}$ by removing all redundant branches (without signal). 
This means that $U({\mT'})$ is the largest tree that satisfies $U({\mT'})\subset{\mT'}$ and, at the same time, $U({\mT'})\subset \mT^*$. The mapping ${\mT'}\rightarrow U({\mT'})$ is many-to-one and for any $\wt\mT\in\mU_1(\mT)$ such that there exists ${\mT'}\in\mU_2(\mT)$ so that $U({\mT'})=\wt\mT$ we have 
$$
\mathcal N(\mT,\wt\mT)\equiv\{{\mT'}\in\mU_2(\mT):U({\mT'})=\wt\mT \}\subseteq \bT_O(\wt\mT),
$$
where $\bT_O(\wt\mT)=\{\mT:\wt\mT\subset\mT\}$ are all trees that contain $\wt\mT$. Using the same logic as in \eqref{eq:bound_overtit_by_k} and \eqref{eq:bound_overfit} we find that 
$$
\frac{\Pi[\mathcal N(\mT,\wt\mT)\C Y]}{\Pi(\wt\mT\C Y)}\leq  \frac{1}{n^{c-5/2}-1}
$$
and thereby using \eqref{eq:bound_U1}
\begin{align*}
\frac{\Pi[\mU_2(\mT)\C Y]}{\Pi(\mT\C Y)}
&=\sum_{{\mT'}\in \mU_2(\mT)}\frac{\Pi[U({\mT'})\C Y]}{\Pi(\mT\C Y)}\frac{\Pi({\mT'}\C Y)}{\Pi[U({\mT'})\C Y]}\\
&=\sum_{\substack{\wt\mT\in U_1(\mT):\\\mathcal N(\mT,\wt\mT)\neq \emptyset}}\,\,\sum_{{\mT'}\in\mathcal N(\mT,\wt\mT)}
\frac{\Pi(\wt\mT\C Y)}{\Pi(\mT\C Y)}\frac{\Pi({\mT'}\C Y)}{\Pi(\wt\mT\C Y)}\\
&\leq \sum_{\substack{\wt\mT\in U_1(\mT):\\\mathcal N(\mT,\wt\mT)\neq \emptyset}}\frac{\Pi(\wt\mT\C Y)}{\Pi(\mT\C Y)} \,\,\sum_{{\mT'}\in \bT_O(\wt\mT)}\frac{\Pi({\mT'}\C Y)}{\Pi(\wt\mT\C Y)} \\
&\leq { \frac{1}{n^{c-5/2}-1}}\sum_{\substack{\wt\mT\in U_1(\mT):\\\mathcal N(\mT,\wt\mT)\neq \emptyset}}
\frac{\Pi(\wt\mT\C Y)}{\Pi(\mT\C Y)} \\
&\leq  { \frac{1}{n^{c-5/2}-1}}\frac{\Pi[\mU_1(\mT)\C Y]}{\Pi(\mT\C Y)}<
 { \frac{1}{n^{c-5/2}-1}}\times\frac{1}{n^{A^2/8\log n}-1}
\end{align*}
Putting it all together, we have 
$$
\frac{\Pi[\Delta(\mT)\C Y]}{\Pi(\mT\C Y)S(\mT\rightarrow\mT^+)}\leq 2 |\mT_{ext}|\left(1+o(1)\right).
$$
The bound for second underfitting case (b) when $\mT\not\subset\mT^*$ and, at the same time, $\mT\not \supset\mT^*$ proceeds analogously, only without the set $\mU_1(\mT)$ that is empty.
 
\smallskip
Putting it all together, and noting that $|\mathcal P(\mT)|\leq 2^L$ and $|\mT_{ext}|\leq 2^L$, the bound in \eqref{eq:ratios} yields $\rho\leq 2^{L+1}(1+o(1))$ for $ c>5/2$.

\section{Proof of Theorem \ref{eq:theo1}}\label{seq:proof_thm_theo1}
We start with the sandwich relation  \eqref{eq:sandwich} and find a lower bound to $\min\limits_{\mT\in\bT}\Pi(\mT\C Y)$. 
By   consistency established in Lemma \ref{lemma:consist}, for any $\mT\in \bT$ we have (with probability at least $1-4/n$)
\begin{align*}
\Pi(\mT\C Y)&= \Pi(\mT^*\C Y)\frac{\Pi(\mT\C Y)}{\Pi(\mT^* \C Y)}\geq\frac{1}{2}\frac{\Pi(\mT\C Y)}{\Pi(\mT^*\C Y)}.
\end{align*} 
We will again split the eligible trees $\bT$ into overfitted $\bT_O$ and underfitted $\bT_U$.
For any tree $\mT\in \bT_O$ with $K$ extra internal nodes, we know from  Section \ref{sec:do_not_overfit}   that (using the shorthand notation $p_l=p_{lk}$)
\begin{align} 
\frac{\Pi(\mT\C Y)}{\Pi(\mT^*\C Y)}&=(1+n)^{-K/2}\prod_{j=1}^K\left(\frac{p_{l_j}}{1-p_{l_j}}\times (1-p_{l_j+1})^2\right)\times \exp\left\{ \frac{  |X_{[j]}' Y|^2}{2(n+1)} \right\}.\label{eq:subset_tree_ratio}
\end{align}
With $p_l=n^{-c}<1/2$ we obtain 
\begin{equation}
\min_{\mT\in \bT_O}\Pi(\mT\C Y)>\frac{1}{2}\min_{\mT\in \bT_O}\frac{\Pi(\mT\C Y)}{\Pi(\mT^*\C Y)}>\frac{1}{2} \left(\frac{1}{2n^c\sqrt{1+n}}\right)^{K}>\frac{ \e^{-\frac{n}{2}[\log 2+(c+1/2)\log (1+n)]}}{2}. \label{eq:overfit_minT}
\end{equation}

Similarly as in Section \ref{sec:canonical_underfit}, we consider two under-fitted cases $\mT\in\bT_U$. First, assume that $\mT\in\bT_U$ and at the same time $\mT\subset\mT^*$. This means that $\mT$ misses at least one signal node, e.g. $(l_S,k_S)\in\mathcal B(A)$.  
We denote with $\mT^0=\mT\rightarrow\mT^1\rightarrow\dots\rightarrow\mT^K=\mT^+$ the sequence of nested trees obtained by adding one additional internal node $(l_j,k_j)$ towards $(l_S,k_S)$.
As in Section \ref{sec:do_not_underfit}, we find that
\begin{align}
\frac{N_Y(\mT)}{N_{Y}(\mT^+)} &=(1+n)^{K/2}\prod_{j=1}^K\exp\left\{- \frac{  |X_{[j]}' Y|^2}{2(n+1)} \right\},
\end{align}
We have 
$$
 |X_{[j]}' Y|^2=|X_{|j|}'(X_{\mT^*}\b^*_{\mT^*} +\bm\varepsilon)|^2\leq 2n^2|\beta^*_{l_jk_j}|^2 +{ 8}n\,\log n 
$$
and thereby
$$
\frac{N_Y(\mT)}{N_{Y}(\mT^+)}>(1+n)^{K/2}\exp\left\{-\frac{n^2\, \|\b^*_{\mT^+\backslash \mT}\|^2}{n+1}   - \frac{{ 4}n\,K \log n}{n+1}\right\} 
$$
If $\mT^+=\mT^*$ we stop tree growing, otherwise we repeat the same process with $\mT^+$, extending a branch towards missing signal to create $\mT^{++}$. We stop after $M$ steps where $\mT^{+...+}=\mT^*$
We then bound
\begin{align}
\frac{N_Y(\mT)}{N_{Y}(\mT^*)} &= \frac{N_Y(\mT)}{N_{Y}(\mT^+)}\times \frac{N_Y(\mT^+)}{N_{Y}(\mT^{++})}\times\dots\times \frac{N_Y(\mT^{+...+})}{N_{Y}(\mT^{^*})}\\
&>(1+n)^{1/2}\exp\left\{-\frac{n^2\, \|\b^*_{\mT^*}\|^2}{n+1}   - \frac{{ 4}n\,|\mT^*_{int}|\log n}{n+1}\right\}.
\end{align}
The prior ratio satisfies (with $p_l=n^{-c}<1/2$)
\begin{equation}
\frac{\Pi(\mT)}{\Pi(\mT^+)}=\frac{1-p_{l_1}}{p_{l_1}}\times\left(\prod_{j=2}^{K-1}\frac{1}{p_{l_j}(1-p_{l_j})}\right)\times\frac{1}{(1-p_{l_K})^2}>n^{c (K-1) } 
\end{equation}
This yields
\begin{equation}
\min_{\mT\in \bT_U:\mT\subset\mT^*}\Pi(\mT\C Y)>(1+n)^{1/2}\exp\left\{-\frac{n^2\, \|\b^*_{\mT^*}\|^2}{n+1}   - \frac{{ 4}n\,|\mT^*_{int}|\log n}{n+1}\right\}.\label{eq:lower_bound_underfit}
\end{equation}
Now we focus on the under-fitted trees that are not necessarily contained inside $\mT^*$. Consider  $\mT\in\bT_U$ such that $\mT\not\subset\mT^*$.
Then we combine growing and pruning operations from the previous steps. First, we prune the tree $\mT$ into the largest tree $\mT_U$ that underfits, i.e. $\mT_U$ is the largest tree such that $\mT_U\in\bT_U$ and $\mT_U\subset\mT$,
and write
$$
\frac{\Pi(\mT\C Y)}{\Pi(\mT^*\C Y)}=\frac{\Pi(\mT\C Y)}{\Pi(\mT^U\C Y)}\frac{\Pi(\mT^U\C Y)}{\Pi(\mT^*\C Y)}
$$
and combining the expression \eqref{eq:overfit_minT} with \eqref{eq:lower_bound_underfit}  we find
\begin{align*}
\min_{\mT\in\bT_U:\mT\not\subseteq\mT^*}\frac{\Pi(\mT\C Y)}{\Pi(\mT^*\C Y)}&> \min_{\mT\in\bT_U:\mT\not\subseteq\mT^*}\frac{\Pi(\mT\C Y)}{\Pi(\mT_U\C Y)}\times \min_{\mT\in\bT_U:\mT\subset\mT^*}\frac{\Pi(\mT\C Y)}{\Pi(\mT^*\C Y)}\\
&> \frac{1}{2} \exp\left\{{-n\left[1+{\left(c+\frac{1}{2}\right)}\log (1+n)\right]} -\frac{n^2\,  |\mT^*_{int}|C_{f_0}^2}{n+1}   - \frac{{ 4}n\,|\mT^*_{int}|\log n}{n+1} \right\}
\end{align*}
Now, by Lemma \ref{lem:classic_mE} we have $l(\mE)\leq 2^{L+1}$  and 
by Lemma \ref{lemma:conduct} we have $\rho(\mE)\leq 2^{L+1}[1+o(1)]$
for $c>1$ on the event space $\mA_n$. Plugging these into \eqref{eq:sandwich}, 
we obtain
\begin{align*}
\tau_{\epsilon}&\leq {l(\mE)\rho(\mE)}\big({\log\left[\frac{1}{\min_{\mT\in\bT}\Pi(\mT\C Y)}\right]+\log (1/\varepsilon)}\big)\\
&\leq 2^{2(L+1)+1} \left\{ n\left[\left(c+\frac{1}{2}\right)\log (1+n) +  |\mT^*_{int}|C_{f_0}^2+1\right]   + 4\,|\mT^*_{int}|\log n+ \log \left(\frac{2}{\epsilon}\right) \right\}.
\end{align*}

\section{Proof of Theorem \ref{theo:patul} (Twiggy Bayesian CART Mixing Upper Bound)}\label{sec:proof_theo_patul}

We follow the same recipe as in the proof of Theorem \ref{eq:theo1}. We first need to show Lemma \ref{lem:classic_mE} and Lemma \ref{lemma:conduct} for the canonical path ensemble for Twiggy Bayesian CART constructed in Section \ref{sec:canonical_twiggy} below.

\subsubsection{Canonical Path Ensemble for Twiggy Bayesian CART}\label{sec:canonical_twiggy}
We again construct a canonical path $T_{\mT,\mT^*}$ between any $\mT\in\bT\backslash{\mT^*}$ and the spanning tree $\mT^*$ from Assumption \ref{ass:f_classic}. 
Recall the definition of signals $\mB(A)=\{(l,k):C_{f_0}> |\beta_{lk}^*|>A\log n/\sqrt{n}\}$, where $\mT^*=\mB(A)$ under Assumption  \ref{ass:f_classic} (a) and $\mB(A)\subseteq \mT^*$
Assumption \ref{ass:f_classic} (b).
The transition function $\mG(\mT)$ for Twiggy Bayesian CART is defined as follows:
\begin{itemize}
\item[(1)] Assume $\mT\supset \mT^*$ is {\bf overfitted},  i.e. $\mT$ forms an envelope around  $\mT^*$ and contains at least one redundant node. 
Denote  the set of all redundant internal nodes whose descendants form a twig  as
$$
S(\mT)\equiv \left\{ (l,k) \in\mT_{int}\backslash\mT^*_{int}: \exists (l^*,k^*)\in\mT_{int}\,\,s.t.\,\, D_{lk}(\mT)=[(l, k) \leftrightarrow (l^*,k^*)] \right\}.
$$
Note that this set contains all pre-terminal nodes.  The mapping $\mG(\cdot)$ finds the most shallow leftmost node inside $S(\mT)$, say $(\wt l,\wt k)$, and turns it into a bottom node with the twig below removed. More formally,  
we define  $\mG(\mT)=\mT^-$ as
 \begin{equation}
\mT^-_{int}=\mT_{int}\,\backslash\, [(\wt l,\wt k)\leftrightarrow (l^*,k^*)]\quad\text{where}\quad (\wt l,\wt k)=\arg\min \limits_{(l,k)\in S(\mT)}(2^{l}+k). 
 \end{equation}
Picking the shallowest (as opposed to deepest) node for removal gives us an opportunity to remove more than one node at a time, thereby shortening the path towards $\mT^*$.
 
\item[(2)] Assume $\mT\not\supseteq \mT^*$ is {\bf underfitted}, i.e. $\mT$ misses at least one  node in $\mT^*$.
\begin{itemize}
\item[(i)] If $\mT\subset \mT^*$, the mapping $\mG(\cdot)$ finds the deepest rightmost  node inside $\mT^*$ missed by $\mT$, say $(l^*,k^*)$, and grows a twig towards it. 
More formally, we define $\mT^+=\mG(\mT)$ where
$$
\mT^+_{int}=\mT_{int}\cup[(\wt l, \wt k)\leftrightarrow (l^*,k^*)]\quad\text{where}\quad (l^*,k^*)=\arg\max_{(l,k)\in\mT^*_{int}\backslash \mT_{int}}(2^l+k)
$$ 
and where $(\wt l, \wt k)$ is the closest node to $(l^*,k^*)$ inside $\mT_{ext}$. Note that taking the deepest rightmost node gives us an opportunity to add more than one signal node at a time.

\item[(ii)] If $\mT\not\subset \mT^*$, i.e., $\mT$ contains redundant nodes and the mapping $\mG(\cdot)$ is the same as in the overfitting the case (1).
\end{itemize}

\end{itemize}
It is easy to see that this transition function  reduces the Hamming distance after each step. Compared to Bayesian CART, however, it may take larger leaps.
It can be shown that the canonical path  ensemble for Twiggy Bayesian CART satisfies the statements of Lemma \ref{lem:classic_mE} and Lemma \ref{lemma:conduct} for the unstructured signal Assumption \ref{ass:f_classic} (b) (see proof of Theorem  \ref{theo:patul} in Section \ref{sec:proof_theo_patul}).

\subsection{Version of Lemma \ref{lem:classic_mE} for Twiggy Bayesian CART}  
The proof is similar to the Bayesian CART version. 
Let us first  bound $|T_{\mT,\mT^*}|$ when $\mT \supset\mT^*$.  In order to reach $\mT^* $ from $\mT$ on a canonical path, we remove  at least one redundant node at a time.
There are at most $2^L$ nodes of which $(2^L-|\mT_{int}^*|)$ are redundant.  
Thereby, we have $\max\limits_{\mT:\mT\supset\mT^*}\{|T_{\mT,\mT^*}|\}\leq
(2^L-|\mT_{int}^*|)$. Using a more complicated argument,  one could take advantage of removals of entire twigs to show that the removal can be achieved in up to  $|\mathcal P(\mT)\backslash \mT_{int}^*|$ steps. Conversely, for any $\mT\subset\mT^*$, the canonical path from $\mT$ towards $\mT^*$ adds a twig towards a node in $\mathcal P(\mT^*)\backslash\mT_{int}$ at a time. 
This means $\max\limits_{\mT:\mT\subset\mT^*}|T_{\mT,\mT^*}|\leq
 |\mathcal P(\mT^*)|$. When $\mT\not\subset\mT^*$ and $\mT\not\supset \mT^*$, the path from $\mT$ towards $\mT^*$ follows by first deleting redundant nodes and then adding nodes towards reaching $\mT^*$. This can be achieved in at most $(2^L-|\mT^*_{int}|+|\mathcal P(\mT)|)$ steps. Finally, for any two trees $\mT,\mT'\in\bT$ the canonical path $T_{\mT,\mT'}$ is obtained by collapsing $T_{\mT,\mT*}$ and $ \bar{T}_{\mT',\mT^*}$.
Thereby, we have $\max_{\mT,\mT'\in\bT}|T_{\mT,\mT'}|\leq  2^{L+1}$. The bound can be sharpened to  $2^L$ using a more complicated argument.

\subsection{Version of Lemma \ref{lemma:conduct} for Twiggy Bayesian CART}  
We will again work on the event space $\mA_n$ in \eqref{eq:event_set}, which has probability at least $1-4/n $. The strategy is the same as in the proof of Lemma  \ref{lemma:conduct}.
We again split the considerations into overfitted and underfitted trees.
 
\subsubsection{When $\mT^*\subset \mT$ (The Overfitted Case)} 

When $\mT$ subsumes the tree $\mT^*$, the mapping $\mG(\cdot)$ finds the shallowest leftmost node inside $\mT_{int}\backslash\mT_{int}^*$, say $(l,k)$, such that the entire branch below $(l,k)$ is a twig, and removes  the twig, turning $(l,k)\in\mT_{int}$ into an external node. In other words, $(l,k)$ has been converted to a bottom node and its descendants erased. More formally,  
recall the definition of ancestors of $(l,k)$ inside $\mT$ as 
$$
A_{lk}(\mT)=\{(l',k')\in\mT_{int}:\exists j\in \{0,1,\dots, L-1\}\,\,s.t.\,\, (l',k')=(l-j,\lfloor k/2^j\rfloor) \}
$$ 
and descendants of $(l,k)$ as
$$
D_{lk}(\mT)=\{(l',k')\in\mT_{int}: (l,k)\in A_{l'k'}(\mT)\}.
$$
Moreover, $(l,k)$ is such that $\exists (\wt l,\wt k)\in\mT_{int}$ such that  $D_{lk}=[(l,k)\leftrightarrow (\wt l,\wt k)]$.
Writing $\mT^-=\mG(\mT)$, we have
$$
\mT^-_{int}=\mT_{int}\,\backslash\, [(l,k)\leftrightarrow (\wt l,\wt k)]. 
$$
We now provide bounds for the two terms in \eqref{eq:ratios}. We denote the length (number of nodes) of the twig $[(l,k)\leftrightarrow (\wt l,\wt k)]$ by $k$. This means that 
$\mT^-_{int}$ has $k$ fewer internal nodes compared to $\mT$ and from the construction all of them are signal-less nodes. With the proposal distribution described in Section \ref{sec:twiggy}, {and since we cannot preclude that $\mT=\mT_{full}^L$,} we have  
$$
\frac{S(\mT\rightarrow\mT^-)}{S(\mT^-\rightarrow\mT)}\leq  \frac{2^{L}\frac{D^L-1}{D-1}}{|\mP(\mT)|}\leq n\, \frac{D^L-1}{D-1}.
$$
Using the posterior ratio expression in \eqref{eq:ratio_post_overfit} and \eqref{eq:ratio_post_overfit2} for overfitted trees  we obtain
$$
\frac{\Pi(\mT\C Y)}{\Pi(\mT^-\C Y)}\frac{S(\mT\rightarrow\mT^-)}{S(\mT^-\rightarrow\mT)}\leq{ {n\,{ \frac{D^L-1}{D-1}}\e^{- k(c-3/2)\log n}}}\leq {\frac{D^L-1}{D-1}}\e^{ -k(c-5/2)\log n}.
$$
 This means that the second maximum quantity  in  \eqref{eq:ratios} is {no greater than a constant multiple of { $\e^{-(c-5/2-\log D)\log n}$ which is $o(1)$ for $c>5/2+\log D$.}
We now focus on the first  ratio in the product in \eqref{eq:ratios}. When $\mT^*\subset\mT$,  all precedents ${\mT'}\in\Delta(\mT)$ (recall the definition of $\Delta(\mT)$ in \eqref{eq:precedent}) are also {\em overfitted} models, i.e. $\mT^*\subset{\mT'}$ and $\mT\subset\mT'$ for all ${\mT'}\in\Delta(\mT)\backslash\{\mT'\}$. Similarly as in the proof of Lemma \ref{lemma:conduct} in Section \ref{sec:lem2_proof}, we decompose $\Delta(\mT)=\cup_{K=1}^{2^L}\Delta(\mT,K)$ into shells $\Delta(\mT,K)=\{{\mT'}\in \Delta(\mT): |T_{\mT',\mT}|=K\}$ defined as in \eqref{eq:delta_shell}.  The difference now is that each tree $\mT'\in \Delta(\mT,K)$ can have {\em more than $K$ redundant nodes}. We denote with $\bm \kappa=(k(1),\dots,k(K))'\in(\N\backslash\{0\})^K$ the vector of numbers of redundant nodes deleted at each of the $K$ steps on the canonical path from $\mT'\in \Delta(\mT, K)$ towards $\mT^*$.
Using again the posterior ratio for overfitted models in  \eqref{eq:ratio_post_overfit} and \eqref{eq:ratio_post_overfit2}, we now obtain 
for $\mT'\in\Delta(\mT,K)$
$$
\frac{\Pi(\mT'\C Y)}{\Pi(\mT\C Y)}\leq \e^{(3/2-c)\sum_{j=1}^Kk(j)\log n}.
$$
Moreover, we define
$$
\Delta(\mT, K,\bm \kappa)=\{{\mT'}\in \Delta(\mT,K): |\mT^j_{int}\backslash\mT^{j-1}_{int}|=k(j),\,\, \forall j=1,\dots, K\}
$$ 
all the trees that are $K$ steps away from $\mT$ and that differ from $\mT$ by adding  exactly $k(j)$ nodes at each step.
When $K=1$, the number  of such precedents is at most the number of binary trees with $k(1)$ internal nodes. This corresponds to the Catalan number   $\mathbb C_{K}$, which
according to Lemma  S-3 in \citep{castillo2021uncertainty}, satisfies
$\mathbb C_{k(1)}\asymp 4^{k(1)}/{k(1)}^{3/2}$.
Then it is easy to see that for $K\geq 1$ we have
$$
\mathrm{card}[\Delta(\mT,K,\bm \kappa)]\leq \prod_{j=1}^K\e^{k(j)\log 4-3/2\log k(j)}.
$$
This yields (since $K\leq \sum_j k(j)\leq {2^L}$ and $ c>5/2$) for $n\geq 8$  
\begin{align*}
\frac{\Pi[\Delta(\mT)\C Y]}{\Pi(\mT\C Y)}&=1+ \sum_{K=1}^{{2^L}} \,\,\,\sum_{\bm \kappa: \sum_j k(j)\leq {2^L}}\,\,\,
\sum_{{\mT'}\in\Delta(\mT,K,\bm \kappa)} \frac{\Pi({\mT'}\C Y)}{\Pi(\mT\C Y)}\nonumber\\
&\lesssim 1+\sum_{K=1}^{{2^L}}\,\,\sum_{\bm \kappa: \sum_j k(j)\leq {2^L}} \frac{ \e^{\sum_j k(j)[\log 8- c\log n]}}{\prod_j k(j)^{3/2}}\\
&\leq 1+\sum_{K=1}^{{2^L}} \left(\frac{n}{2}\right)^K  \e^{-K[{(c-3/2)}\log n -\log 8]} \leq 1+\sum_{K=1}^{{2^L}}  \e^{-K[{(c-5/2)}\log n -\log 8]}\\&=1+ \frac{1}{n^{c-5/2}/8-1}
\end{align*}
and thereby 
$$	
\left\{\frac{\Pi[\Delta(\mT)\C Y]}{\Pi(\mT\C Y)S(\mT\rightarrow\mT^{-})}\times \max\left[1, \frac{\Pi(\mT\C Y)S(\mT\rightarrow\mT^{-})}{\Pi(\mT^{-}\C Y)S(\mT^{-}\rightarrow\mT)}\right]\right\}
\leq  2|\mT_{int}|(1+o(1)).
$$

\subsubsection{When $\mT^*\not\subset \mT$ (The Underfitted Case)}\label{subsub:underfit1}
If the tree underfits and contains extra nodes, those are deleted first which coincides with the previous case. For those underfitted trees such that $\mT\subset\mT^*$, the internal nodes $\mT_{int}$ do not include at least one pre-terminal node  $\mP(\mT^*)$. According to the Assumption \ref{ass:f_classic}, the pre-terminal nodes have large enough signal, where $|\beta_{lk}^*|>A\log n/\sqrt{n}$ for some $A>0$ for all $(l,k)\in\mP(\mT^*)$.
Denote with $(l,k)\in \mP(\mT^*)\backslash\mT_{int}$ the deepest rightmost signal pre-terminal node missed by $\mT$. Let $(l^*,k^*)\in\mT_{ext}$ be the  external node of $\mT$ that is closest to the signal node $(l,k)$. Then $\mG(\mT)$ is formed by growing   a twig $[(l^*,k^*)\leftrightarrow(l,k)]$. 
In other words,  $\mT^+=\mG(\mT)$ is the smallest tree that contains nodes $(l,k)\cup\mT_{int}$ inside and
$\mT^+_{int}=\mT_{int}\cup[(l^*,k^*)\leftrightarrow(l,k)]$ has $k$ more internal nodes relative to $\mT_{int}$.
Then we can write 
$$
\frac{S(\mT\rightarrow\mT^+)}{S(\mT^+\rightarrow\mT)}\leq  2|\mT^+_{int}| \leq n.
$$
Using the expression for the posterior ratio in \eqref{eq:posterior_ratio_overfit} and  \eqref{eq:posterior_ratio_overfit2}  we  again find that 
$$
\frac{\Pi(\mT\C Y)}{\Pi(\mT^+\C Y)}\frac{S (\mT\rightarrow\mT^+)}{S(\mT^+\rightarrow\mT)} { \leq}  n \e^{ -A^2/8\log^2 n}=o(1).
$$
We now proceed similarly as in Section \ref{sec:canonical_underfit}. 
The precedents $\Delta(\mT)$ in \eqref{eq:precedent} of an underfitted model $\mT$ are  again divided into mutually exclusive categories defined in Section \ref{sec:canonical_underfit}, i.e. $\Delta(\mT)=\{\mT\}\cup \mU_1(\mT)\cup \mU_2(\mT)$.
We again denote with $\Delta(\mT,K)\subset\Delta(\mT)$ those precedents that are $K$ steps away  from $\mT$ on a canonical path. 
Note that trees inside $\mU_1(\mT) \cap\Delta(\mT,K)$ have at least $K$ fewer internal nodes compared to $\mT$ 
and   trees inside $\mU_2(\mT) \cap\Delta(\mT,K)$   have at least $K$ extra internal nodes compared to $\mT$.

Each tree in $\Delta(\mT,K)\,\cap\, \mathcal U_1(\mT)$ misses $K$ preterminal nodes in $\mathcal P(\mT^*)$ (and thereby at least $K$ internal nodes relative to $\mT^*$). 
The cardinality   $\mathrm{card}[\Delta(\mT,K)\,\cap\, \mathcal U_1(\mT)]$  is thereby at most the number of binary trees with $|\mT_{int}^*|-K$ internal nodes which equals the Catalan number $\mathbb C_{|\mT_{int}^*|-K}$. By Lemma 6 in \cite{castillo2021uncertainty}, we have $\mathbb C_K\asymp 4^K/K^{3/2}$ and
using the expressions in \eqref{eq:posterior_ratio_overfit}  we obtain for large enough $A>0$ and $|\mT^*_{int}|\lesssim \log^2 n$  
\begin{align}
\frac{\Pi[\mU_1(\mT)\C Y]}{\Pi(\mT\C Y)}&\leq\sum_{K=1}^{|\mathcal P(\mT^*)|}\sum_{{\mT'}\in\Delta(\mT,K)\cap\mU_1(\mT)}\frac{\Pi({\mT'}\C Y)}{\Pi(\mT\C Y)}
\lesssim |\mT^*_{int}|\times 4 ^{|\mT^*_{int}|}\times \e^{-A^2/8 \log^2 n} =o(1). 
\end{align}
For the second type of underfitting precedents, we follow the same arguments as in Section  \ref{sec:canonical_underfit} to conclude
\begin{align*}
\frac{\Pi[\mU_2(\mT)\C Y]}{\Pi(\mT\C Y)} \leq  {\frac{1}{n^{c-5/2}/8-1}}\frac{\Pi[\mU_1(\mT)\C Y]}{\Pi(\mT\C Y)}
\end{align*}
and thereby 
\begin{align*}
\frac{\Pi[\Delta(\mT)\C Y]}{\Pi(\mT\C Y)S(\mT\rightarrow\mT^+)}&\leq {{ \frac{2^{L}(D^L-1)}{D-1}}}\,\left(1+o(1)\right).
\end{align*}

The bound for the second underfitting case   when $\mT\not\subset\mT^*$ and, at the same time, $\mT\not \supset\mT^*$ proceeds analogously, only without the set $\mU_1(\mT)$ that is empty. 

\smallskip
Putting it all together,   the bound in \eqref{eq:ratios} yields $\rho(\mE)\leq { \frac{(D^L-1)}{D-1}}2^{L+1}(1+o(1))$ for $c>{ 5/2}+\log D$.

\medskip
The mixing bound \eqref{eq:tau_bound2} for Twiggy Bayesian CART is then obtained by the sandwich relation  \eqref{eq:sandwich} with \eqref{eq:congestion} in the same way as in the proof of Theorem \ref{eq:theo1}.

\section{Proof of Theorem \ref{thm:coupling} (Mixing Upper Bound for Locally Informed Versions)}\label{pf:coupling}
\subsection{General two-stage drift condition} 
We will use a similar strategy as in  \cite{zhou2021dimension}. We first state the general two-stage drift condition theorem and its corollary, which are a slight modification from  \cite{zhou2021dimension}. 
\begin{theorem}\label{thm:drift2_modif}Consider a Markov chain $(X_t)_{t\in \N}$ on a state space $(\mathcal{X},\mathcal{E})$ where the $\sigma$-algebra $\mathcal{E}$ is countably generated. Assume a transition kernel $P$ that is reversible with respect to a stationary distribution $\pi$ and  that $P$ has a non-negative eigenspectrum. Suppose that there exist two drift functions $V_1,V_2:\mX\rightarrow[1,\infty)$ with constants $\lambda_1,\lambda_2\in (0,1)$, a set $A\in \mE$ and a point $x^*\in A$ such that 
\begin{enumerate}
\item[(i)] $PV_1\leq \lambda_1 V_1$ on $A^c$,
\item[(ii)] $PV_2\leq \lambda_2 V_2$ on $A\backslash \{x^*\}$, and 
\end{enumerate}
Further, suppose that $A$ satisfies the following conditions for some finite constants $K_1\geq 2$, $K_2\geq 1$.
\begin{enumerate}
\item[(iii)] For any $x\in A$, $V_1(x)\leq K_1/2$, and if $P(x,A^c)>0,~\E_x[V_1(X_1)|X_1\in A^c]\leq K_1/2$.
\item[(iv)] For any $x\in A$, $V_2(x)\leq K_2$, and if $P(x,A^c)>0,~\E_x[V_2(X_1)|X_1\in A^c]\geq V_2(x)$.
\item[(v)]For any $x\in A$, $P(x,A^c)\leq q$ for some constant $q<\min\{1-\lambda_1,(1-\lambda_2)/K_2\}$.
\end{enumerate}
Then, for every $x\in \mX$ and $t\in \N$, we have 
\[\|P^t(x,\cdot)-\pi\|_{TV} \leq 4\alpha^{t+1}\left(1+\frac{V_1(x)}{K_1}\right),\]
where $\alpha$ is a constant that satisfies
\[\alpha=\frac{1+\rho^m}{2}=\frac{1+K_1^m/u}{2}, ~\rho=\frac{qK_2}{1-\lambda_2},~u=\frac{1}{1-q/2},~ m=\frac{\log u}{\log (K_1/\rho)}.\]

\end{theorem}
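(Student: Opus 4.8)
The plan is to collapse the two conditions (i)--(ii), together with the boundary controls (iii)--(v), into a \emph{single} geometric drift condition toward the singleton $\{x^*\}$, and then invoke the one-stage drift-to-mixing bound for reversible chains with non-negative eigenspectrum, whose proof rests on the coupling inequality (\cite{jerison2016drift}, underlying \cite{zhou2021dimension}). This is the natural route because the stated conclusion depends on the initial state only through $V_1(x)$: $V_1$ is the global Lyapunov function that may be huge far from $A$, whereas inside $A$ the second function is bounded ($V_2\leq K_2$ by (iv)), so the fine approach to $x^*$ contributes only bounded constants. Non-negativity of the spectrum is what lets a one-step contraction factor $\alpha$ control $\|P^t(x,\cdot)-\pi\|_{TV}$ directly, with no correction for negative eigenvalues.

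Concretely I would build a combined Lyapunov function $V:\mX\to[1,\infty)$ that is a rescaling of $V_1$ on $A^c$ and of $V_2$ on $A$, glued with a free scaling constant $b>0$ and normalized so that $V\geq 1$ everywhere and $V\asymp 1+V_1(x)/K_1$. The interlocking hypotheses are exactly the ingredients needed to control $V$ across the boundary $\partial A$: the bounds $V_1\leq K_1/2$ and $\E_x[V_1(X_1)\mid X_1\in A^c]\leq K_1/2$ from (iii) bound the jump of $V$ when the chain escapes from $A$ into $A^c$; the bounds $V_2\leq K_2$ and $\E_x[V_2(X_1)\mid X_1\in A^c]\geq V_2(x)$ from (iv) handle the reverse crossing (and encode that leaving $A$ is penalized, keeping the chain near $x^*$); and the escape bound $P(x,A^c)\leq q$ from (v) weighs these crossing contributions.

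I would then verify the single inequality $(PV)(x)\leq \alpha\,V(x)$ for every $x\neq x^*$ by case analysis. For $x\in A^c$, $(PV)(x)=b\,(PV_1)(x)\leq b\lambda_1 V_1(x)$ by (i), and $\lambda_1<1-q$ supplies contraction. For $x\in A\setminus\{x^*\}$ I split the expectation over $\{X_1\in A\}$ and $\{X_1\in A^c\}$: the in-$A$ part is $\leq\lambda_2 V_2(x)$ by (ii), while the escape part is at most $q$ times a multiple of $K_1$ by (iii) and (v), so that after dividing by $V(x)$ (using $V_2\leq K_2$) one gets a contraction factor combining $\lambda_2$ with the escape penalty $qbK_1$; the entering-$A$ contribution from $x\in A^c$ is absorbed via $V_2\leq K_2$ on $A$. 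Optimizing $b$ to balance the $A^c$-contraction against the escape penalty is precisely what produces the intermediate quantities $\rho=qK_2/(1-\lambda_2)$, $u=1/(1-q/2)$, the crossover exponent $m=\log u/\log(K_1/\rho)$, and the resulting rate $\alpha=(1+\rho^m)/2=(1+K_1^m/u)/2$; the hypothesis $q<\min\{1-\lambda_1,(1-\lambda_2)/K_2\}$ guarantees $\rho<1$, hence $\alpha<1$. Feeding the single drift condition into the reversible-chain coupling lemma (\cite{pitman1976coupling,lindvall2002lectures}) gives $\|P^t(x,\cdot)-\pi\|_{TV}\lesssim \alpha^{t}V(x)$, and tracking constants together with $V(x)\lesssim 1+V_1(x)/K_1$ yields the stated form $4\,\alpha^{t+1}\big(1+V_1(x)/K_1\big)$.

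The main obstacle is the second case together with the optimization over the gluing constant $b$: one must show the escape contribution is simultaneously negligible relative to $V_2(x)$ and compatible with the $A^c$-contraction, and this balance closes only at the specific exponent $m$ (so the appearance of $\rho^m=K_1^m/u$ is forced, not cosmetic). A secondary subtlety is the general-state-space version of the one-stage lemma: since $\mE$ is only assumed countably generated, $\{x^*\}$ need not be a $\pi$-atom, so following \cite{zhou2021dimension} I would phrase the return argument through excursions into and out of $A$ rather than a literal singleton, the finite tree space $\bT_L$ of the intended application being the benign special case.
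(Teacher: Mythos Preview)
The paper does not actually prove this theorem: it is stated as ``a slight modification from \cite{zhou2021dimension}'' and used as a black box, with no argument supplied beyond a (commented-out) remark that condition (iii) replaces the bound $V_1\le 1$ on $A$ in the original by $V_1\le K_1/2$. So there is no paper proof to compare against; the relevant benchmark is the proof in \cite{zhou2021dimension}.

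Your plan is precisely that proof: build a single Lyapunov function by gluing a rescaled $V_1$ on $A^c$ to a rescaled $V_2$ on $A$, use (iii)--(v) to control the boundary crossings, optimize the gluing constant to produce the quantities $\rho,u,m,\alpha$, and conclude via the one-stage drift/coupling bound for reversible chains with non-negative spectrum. Your identification of the roles of each hypothesis and of why the exponent $m$ is forced matches the original argument, and you correctly note that in the intended application the state space is the finite set $\bT_L$, so the general-state-space subtlety is moot. In short, your proposal is correct and coincides with the source the paper defers to.
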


\begin{corollary}\label{cor:drift2}
Recall the definition of the $\epsilon$-mixing time in \eqref{eq:tau_def}. In the setting of Theorem \ref{thm:drift2_modif}, assume that $\lambda_1,\lambda_2\rightarrow 1$ and $q\leq \min\{ 1-\lambda_1, (1-\lambda_2)/C_2K_2\}$ for some universal constant $C_2>1$. With $K_1 = 2\sup_{x\in \mX} V_1(x)$, for sufficiently large $n$, we have 
$$ 
\tau_\epsilon \lesssim \frac{4\log (6/\epsilon)}{\log C_2} \log (C_2 K_1) \max\left\{\frac{1}{1-\lambda_1},\frac{C_2K_2}{1-\lambda_2}\right\}.
$$

\end{corollary}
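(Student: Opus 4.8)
The plan is to feed Theorem~\ref{thm:drift2_modif} a judicious choice of the escape bound $q$, read off the geometric decay it provides, and convert that into a mixing-time bound. First I would take $q=\min\{1-\lambda_1,(1-\lambda_2)/(C_2K_2)\}$. By the hypothesis of the corollary the true escape probability $\sup_{x\in A}P(x,A^c)$ is at most this value, so this $q$ is admissible in condition~(v) of Theorem~\ref{thm:drift2_modif}; the strict inequality required there, namely $q<\min\{1-\lambda_1,(1-\lambda_2)/K_2\}$, holds because $C_2>1$. Applying the theorem and using that $K_1=2\sup_{x\in\mX}V_1(x)$ forces $V_1(x)/K_1\le 1/2$ uniformly in $x$, we obtain
\[
\|P^t(x,\cdot)-\pi\|_{TV}\ \le\ 4\,\alpha^{t+1}\Big(1+\tfrac12\Big)\ =\ 6\,\alpha^{t+1}\qquad\text{for all }x\in\mX,\ t\in\N.
\]
Since $\alpha^{t+1}$ is non-increasing in $t$, requiring $6\alpha^{t+1}\le\epsilon$ and using $-\log\alpha\ge 1-\alpha$ gives $\tau_\epsilon\le \log(6/\epsilon)/(1-\alpha)+1$, the rounding-up contributing at most the harmless $+1$.

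The core of the argument is a lower bound on $1-\alpha=\tfrac12(1-\rho^m)$. Write $\rho^m=\exp\{-m\log(1/\rho)\}$ with $\rho=qK_2/(1-\lambda_2)$. Our choice of $q$ yields $\rho\le 1/C_2<1$, and since $K_1\ge 2$ we have $\log(K_1/\rho)=\log K_1+\log(1/\rho)>0$, so $m>0$ and
\[
m\log\tfrac1\rho\ =\ \log u\cdot\frac{\log(1/\rho)}{\log K_1+\log(1/\rho)}\ \ge\ \log u\cdot\frac{\log C_2}{\log(C_2K_1)},
\]
where the inequality uses that $t\mapsto t/(\log K_1+t)$ is increasing on $(0,\infty)$ together with $\log(1/\rho)\ge\log C_2$. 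Combining with $\log u=-\log(1-q/2)\ge q/2$ gives $m\log(1/\rho)\ge \tfrac{q}{2}\cdot\tfrac{\log C_2}{\log(C_2K_1)}$. Because $\lambda_1,\lambda_2\to 1$ forces $q\to 0$ and hence $m\log(1/\rho)\le\log u\to 0$, for all sufficiently large $n$ the elementary bound $1-e^{-x}\ge x/2$ (indeed $\ge(1-o(1))x$) applies with $x=m\log(1/\rho)$, so
\[
1-\alpha\ \ge\ \tfrac14\,m\log\tfrac1\rho\ \ge\ \frac{q\,\log C_2}{8\,\log(C_2K_1)}.
\]

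Finally, plugging this into $\tau_\epsilon\le \log(6/\epsilon)/(1-\alpha)+1$ and using $1/q=\max\{1/(1-\lambda_1),\,C_2K_2/(1-\lambda_2)\}$ gives
\[
\tau_\epsilon\ \le\ \frac{8\log(6/\epsilon)}{\log C_2}\,\log(C_2K_1)\,\max\Big\{\frac{1}{1-\lambda_1},\ \frac{C_2K_2}{1-\lambda_2}\Big\}+1,
\]
and for $n$ large the additive constant is absorbed into the $\lesssim$, which is the claimed bound (a sharper use of $1-e^{-x}$ replaces the $8$ by $4$). The only genuinely delicate points are verifying that the chosen $q$ satisfies the strict threshold in condition~(v) of Theorem~\ref{thm:drift2_modif}, and the regime of validity of $1-e^{-x}\ge x/2$ — which is exactly why the conclusion is stated only for sufficiently large $n$; everything else is bookkeeping with the explicit formula for $\alpha$.
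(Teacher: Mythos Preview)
Your proof is correct and is the natural derivation: apply Theorem~\ref{thm:drift2_modif}, lower-bound $1-\alpha$ via the explicit formula $\alpha=(1+\rho^m)/2$, and convert to a mixing-time bound. The paper itself does not supply a proof of this corollary---it is stated (together with Theorem~\ref{thm:drift2_modif}) as a slight modification of results from \cite{zhou2021dimension}---so there is no paper argument to compare against; your write-up fills in exactly the computation the paper omits.

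One small caveat: your assertion that the strict inequality $q<\min\{1-\lambda_1,(1-\lambda_2)/K_2\}$ in condition~(v) ``holds because $C_2>1$'' is not quite right in the case where your chosen $q=\min\{1-\lambda_1,(1-\lambda_2)/(C_2K_2)\}$ is attained at $1-\lambda_1$, since then $q=1-\lambda_1$ and the strict comparison against $1-\lambda_1$ fails. This is harmless: take instead $q=(1-\delta)\min\{1-\lambda_1,(1-\lambda_2)/(C_2K_2)\}$ for any small $\delta>0$; the strict inequality then holds, $\rho\le(1-\delta)/C_2<1/C_2$ is preserved, and the extra factor $1/(1-\delta)$ in the final bound is absorbed into the~$\lesssim$. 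Everything else---the monotonicity argument giving $m\log(1/\rho)\ge (q/2)\cdot\log C_2/\log(C_2K_1)$, the use of $1-e^{-x}\ge x/2$ in the regime $q\to 0$, and the identification $1/q=\max\{1/(1-\lambda_1),\,C_2K_2/(1-\lambda_2)\}$---is clean.
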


\subsubsection{Application of general two-stage drift condition} \label{sec:application_two_drift}
First, we introduce some notation. For $\mT,\wt\mT\in\bT_L,$ denote by $B(\mT,\wt\mT)$ the posterior ratio $\Pi(\wt\mT\C Y)/\Pi(\mT\C Y)$. If $\wt\mT\subset\mT$ and $|\mT\backslash\wt\mT|=K$, we say $\wt\mT$ is a $K$-node sub tree of $\mT$.

In what follows, we show how the general two drift conditions of \cite{zhou2021dimension} can be  applied in the context of regression trees. First, we consider the case of Bayesian CART. We will check the conditions in Theorem \ref{thm:drift2_modif}. To ensure a non-negative spectrum of the transition matrix, we consider the lazy version $P_{lazy}$, defined as $(P+I)/2$. We can account for this by scaling the terms added to 1 in Proposition \ref{prop:v2_approx} by a factor of $1/2$\footnote{When $(PV)(\mT)\leq (1-\delta) V(\mT)$ for some $\delta\in(0,1)$ and a drift function $V$, we have $(PV)(\mT)/2\leq (1/2-\delta/2) V(\mT)$. Therefore, $(P_{lazy}V)(\mT)=(PV)(\mT)/2+V(\mT)/2 \leq  (1-\delta/2) V(\mT)$. }. Therefore, the bounds in Proposition \ref{prop:v2_approx} become in a following manner. For any underfitted tree $\mT\in \bT_L$, 
\[\frac{(P_{lazy}V_1)(\mT)}{V_1(\mT)}\leq 1-\frac{A^2}{2^{L+6}(C_{f_0}+2)^2}\frac{\log^2n}{n}+\frac{\e-1}{4n^{(A^2/8\log n-1)}},\]
and for any overfitted tree $\mT\in\bT_L$ such that $\mT\neq \mT^*$, 
\begin{equation}\label{eq:m_case}
\frac{(P_{lazy}V_2)(\mT)}{V_2(\mT)}\leq 1-\frac{1}{2^{L+3}}\frac{1}{(1+n^{5/2-c})}+\frac{M}{2n^{c-3/2}}+\frac{1}{2}n^{1- (A^2\log n )/8 },
\end{equation}
where $M$ is set to 1. We assign the values $\lambda_1$ and $\lambda_2$ to correspond to $V_1$ and $V_2$, where
\begin{align}
\lambda_1&=1-\frac{A^2}{72(C_{f_0}+2)^2}\frac{\log^2n}{2^L\,n},\nonumber\\
\lambda_2&=1-\frac{1}{2^{L+7/2}}.\label{eq:lambda_2_large_n}
\end{align} 
Let $A\subset \bT_L$ be a set of overfitted trees and $\mT^*\in A$ is the true tree. Then, conditions (i) and (ii) of Theorem \ref{thm:drift2_modif} are satisfied with the above $\lambda_1$ and $\lambda_2$ for a large enough $n$ and $ c>3$. Let $K_1=2\e, K_2=\e,$ and then by Lemma \ref{lem:v_property} (i), conditions (iii) and (iv) are satisfied; The second condition of (iv) is satisfied because for any proposal $\wt\mT\in A^c$ from the state $\mT\in A$, we have $V_2(\mT)\leq V_2(\wt\mT)$ (see, Remark \ref{eq:v2design}). To check condition (v), we set $C_2=2\e$ as a universal constant in Corollary \ref{cor:drift2}. We want to see if $P_{lazy}(\mT,A^c)$ for $\mT\in A$ is smaller than $q= \min(1-\lambda_1,(1-\lambda_2)/C_2K_2)$. Note that the only transition that makes an overfitted tree to underfitted one is the PRUNE movement. Also, by \eqref{eq:trans_bound} and by the definition of $P_{lazy}$, we have $ P_{lazy}(\mT,A^c)\leq B(\mT, A^c)/2$. Therefore, by applying \eqref{eq:lem_b}, we have
\begin{align}
\frac{B(\mT, A^c)}{2}&=\sum_{\wt\mT\in A^c\cap N_p(\mT)} \frac{B(\mT, \wt\mT)}{2}\leq \sum_{\wt\mT\in A^c\cap N_p(\mT)}\frac{1}{2n^{ (A^2\log n )/8 }}\nonumber\\
&\leq \frac{1}{2n^{ (A^2\log n )/8 -1}}\leq q=\min(\frac{A^2}{72(C_{f_0}+2)^2}\frac{\log^2n}{2^L\,n},\frac{1}{\e^2 \,2^{L+9/2} }),\label{eq:q_condition}
\end{align}
for large enough $A$ and $n$. Therefore, condition (v) is satisfied. Now, by applying Corollary \ref{cor:drift2}, we have
 \begin{align*}
\tau_\epsilon&\lesssim \frac{4\log(6/\epsilon)}{\log C_2}\log( 2C_2\e)\max\left\{\frac{72(C_{f_0}+2)^2}{A^2}\frac{2^L\,n}{\log^2n}, C_2 2\e 2^{L+7/2}\right\}\\
&\lesssim  \log(6/\epsilon) \max\left( \frac{9\,(C_{f_0}+2)^2}{A^2}\frac{2^L\,n}{\log ^2n}, 2^{L+5}\right).
\end{align*}
Lastly, when it comes to the Twiggy Bayesian CART, the only difference is that we have $M=2L$ in \eqref{eq:m_case} instead of $M=1$. This change does not affect the above proof because $\lambda_2$ in \eqref{eq:lambda_2_large_n} is still valid. Therefore, we finish our proof.

\subsection{Proof of Proposition \ref{prop:v2_approx}}\label{pf:prop1}
The proof is based on the key decomposition characterized in \cite{zhou2021dimension} as (for $i=1,2$) 
\begin{align*}
\frac{(PV_i)(\mT)}{V_i(\mT)}&=1+\sum_{\wt\mT\neq\mT}R_i(\mT,\wt\mT)P(\mT,\wt\mT)\\
&=1+\sum_{\star=g,p}\sum_{\wt\mT\in \mN_\star(\mT)}R_i(\mT,\wt\mT)P(\mT,\wt\mT),
\end{align*} and on a useful bound for the transition probability (for any $\wt\mT\neq\mT\in \bT_L$)
\begin{equation}\label{eq:trans_bound}
P(\mT,\wt\mT)=\min\{S(\mT\rightarrow \wt\mT),B(\mT,\wt\mT)S(\wt\mT\rightarrow \mT)\}\leq B(\mT,\wt\mT).
\end{equation}

\medskip 

Here, we consider both the Bayesian and Twiggy CART together in one place. This is possible by observing the following commonalities. (1) The neighbor sizes for both algorithms can be bounded by $|\mN_p(\mT)|\leq 2^L\leq n/2$ and $|\mN_g(\mT)|\leq 2^L\leq n/2$. For the Bayesian CART, $|\mN_g(\mT)| = |\mT_{ext}|\leq 2^{L-1}\leq n/2$. In the case of the Twiggy CART, $|\mN_g(\mT)| = |\mT_{full,int}^L\backslash \mT_{int}|\leq 2^L\leq n/2$. (2) The internal tree size difference between the existing tree and the proposed one is $k\geq 1$ for the Twiggy CART, while the Bayesian CART is a special case with $k=1$. These commonalities allow for a unified framework to prove both algorithms.

The unimodal shape of the posterior is crucial for guaranteeing the linear mixing rate of LIT-MH. Therefore, we first characterize the posterior landscape, which implies the posterior unimodality given (Twiggy) GROW and PRUNE movements. Recall that on the event $\mA_n$ defined in \eqref{eq:event_set}, we have two prior ratios. First, similar to \eqref{eq:ratio_post_overfit2} for any overfitted trees $\mT\subset \wt\mT\in \bT_L$ such that $\mT\supseteq\mT^*$ and $|\wt\mT\backslash\mT|=K$, 
\begin{equation}\label{eq:lem_a}
\frac{\Pi(\wt\mT\C Y)}{\Pi(\mT\C Y)}\leq n^{-K(c-3/2)}.
\end{equation}
 Second, due to Assumption \ref{ass:f_classic}, for any underfitted tree $\mT\in \bT_L$, there exists a tree $\wt\mT\in \mN_g(\mT)$ containing (at least) one extra signal node, which may not be unique. Such $\wt\mT$ should have one extra node than $\mT$ for the Bayesian CART or $k\geq 1$ extra nodes for the Twiggy Bayesian CART. For any such $\wt\mT$, from \eqref{eq:posterior_ratio_overfit2} we have 
\begin{equation}\label{eq:lem_b}
\frac{\Pi(\wt\mT\C Y)}{\Pi(\mT\C Y)}\geq n^{  (A^2\log n )/8}.
\end{equation}
Now, we characterize the properties of the two drift functions.
\begin{lemma}\label{lem:v_property} Under the same assumptions of Theorem \ref{thm:coupling}, for any $\mT,\wt\mT\in \bT_L$, the following statements hold with probability at least $1-4/n-\e^{-n/8}$.
\begin{itemize}
\item[(i)] $1\leq V_1(\mT)\leq \e$ and $1\leq V_2(\mT)\leq \e$.
\item[(ii)] When $\wt\mT\supset\mT$, 
\[R_1(\mT,\wt\mT)\leq 0,~~R_1(\wt\mT,\mT)\geq 0.\]
\item[(iii)] When $\wt\mT\supset\mT$ and $|\wt\mT_{int}\backslash \mT_{int}|=k$, where $\mT\supseteq \mT^*$,
\[R_2(\mT,\wt\mT)\leq \frac{2k}{2^{L}},~~R_2(\wt\mT,\mT)\leq -\frac{k}{2^{L+1}}.\]
\end{itemize}
\end{lemma}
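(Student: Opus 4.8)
The plan is to settle the spectral‑monotonicity statement (ii) and the two‑sided estimates for $V_2$ in (iii) by elementary arguments, and to put essentially all of the probabilistic work into the global bounds in (i), specifically into $V_1(\mT)\le\e$. For the lower bounds in (i): in a regular design $P_\mT/n=\bm X_\mT(\bm X_\mT'\bm X_\mT)^{-1}\bm X_\mT'$ is the orthogonal projection onto the column span of $\bm X_\mT$, so $I-P_\mT/n$ is positive semi‑definite and the exponent defining $V_1$ is nonnegative; the exponent of $V_2$ is visibly a sum of nonnegative terms. Hence $V_1(\mT),V_2(\mT)\ge1$. For $V_2(\mT)\le\e$ it suffices to check the exponent is at most $1$: if $\mT\supseteq\mT^*$ the second summand vanishes and the first is $|\mT_{int}|-|\mT^*_{int}|\le 2^L$; if $\mT\not\supseteq\mT^*$, inclusion–exclusion on $|\mT_{int}\cup\mT^*_{int}|$ collapses both summands to $2^L-|\mT^*_{int}|\le 2^L$ (the identity already noted in Remark \ref{eq:v2design}); divide by $2^L$.

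The main step is $V_1(\mT)\le\e$, i.e. $Y'(I-P_\mT/n)Y\le 2^L(C_{f_0}+2)^2(n+1)$. Since $0\preceq I-P_\mT/n\preceq I$, I would bound $Y'(I-P_\mT/n)Y\le Y'Y$. Under Assumption \ref{ass:f_classic} one has $F_0=\bm X\b^*$, so $\bm\nu=\bm\varepsilon$ and $Y=\bm X\b^*+\bm\varepsilon$, giving $Y'Y=\|\bm X\b^*\|^2+2\langle\bm X\b^*,\bm\varepsilon\rangle+\|\bm\varepsilon\|^2$. I would bound the three pieces separately: orthonormality of the Haar columns (each of squared norm $n$) gives $\|\bm X\b^*\|^2=n\|\b^*\|^2\le n|\mT^*_{int}|C_{f_0}^2$; on the event $\mA_n$ from \eqref{eq:event_set} (probability at least $1-4/n$), $|\langle\bm X\b^*,\bm\varepsilon\rangle|\le|\mT^*_{int}|C_{f_0}\|\bm X'\bm\varepsilon\|_\infty\le 2|\mT^*_{int}|C_{f_0}\sqrt{n\log n}$; and a $\chi^2_n$ concentration inequality gives $\|\bm\varepsilon\|^2\le 2n$ on an event of probability at least $1-\e^{-n/8}$. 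Using $|\mT^*_{int}|\le 2^L$ and $\sqrt{n\log n}\le n+1$ for $n$ large, these terms are at most $2^LC_{f_0}^2(n+1)$, $2^L\cdot4C_{f_0}(n+1)$, and $2^L\cdot4(n+1)$, whose sum is $2^L(C_{f_0}+2)^2(n+1)$. A union bound over the two events gives the stated probability $1-4/n-\e^{-n/8}$.

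For (ii), $\wt\mT\supset\mT$ makes the span of $\bm X_{\wt\mT}$ contain that of $\bm X_\mT$, so $P_{\wt\mT}\succeq P_\mT$ and $Y'(I-P_{\wt\mT}/n)Y\le Y'(I-P_\mT/n)Y$; exponentiating gives $V_1(\wt\mT)\le V_1(\mT)$, hence $R_1(\mT,\wt\mT)=V_1(\wt\mT)/V_1(\mT)-1\le 0$ and $R_1(\wt\mT,\mT)=V_1(\mT)/V_1(\wt\mT)-1\ge 0$. For the remaining bounds in (iii), when $\mT\supseteq\mT^*$ and $\wt\mT\supset\mT$ both trees overfit, so by Remark \ref{eq:v2design} $V_2(\wt\mT)/V_2(\mT)=\exp\{k/2^L\}$ with $k=|\wt\mT_{int}\backslash\mT_{int}|\le 2^L$; then $R_2(\mT,\wt\mT)=\e^{k/2^L}-1$ and $R_2(\wt\mT,\mT)=\e^{-k/2^L}-1$, and since $k/2^L\in[0,1]$ the elementary inequalities $\e^x-1\le2x$ and $\e^{-x}-1\le-x/2$ on $[0,1]$ yield $R_2(\mT,\wt\mT)\le 2k/2^L$ and $R_2(\wt\mT,\mT)\le-k/2^{L+1}$. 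I expect the only genuine obstacle is arranging the constants in the $V_1$ upper bound so the three error terms fit cleanly under $2^L(C_{f_0}+2)^2(n+1)$; everything else reduces to spectral monotonicity of nested projections or a one‑line scalar estimate.
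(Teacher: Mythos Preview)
Your proposal is correct and follows essentially the same route as the paper: both arguments bound $Y'(I-P_\mT/n)Y\le Y'Y$, expand $Y'Y$ into the signal, cross, and noise pieces, control the cross term on $\mA_n$ and the noise term via a $\chi^2_n$ tail bound with probability $1-\e^{-n/8}$, and then fit the pieces under $2^L(C_{f_0}+2)^2(n+1)$; parts (ii) and (iii) are handled identically via projection monotonicity and the scalar inequalities $\e^x-1\le 2x$, $\e^{-x}-1\le -x/2$ on $[0,1]$. The only cosmetic difference is that you bound the cross term by H\"older ($\|\b^*\|_1\|\bm X'\bm\varepsilon\|_\infty$) whereas the paper uses Cauchy--Schwarz ($\|\b^*\|_2\|\bm X'\bm\varepsilon\|_2$ with $\|\bm X'\bm\varepsilon\|_2\le\sqrt{p}\|\bm X'\bm\varepsilon\|_\infty$); both lead to the same final constant.
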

\begin{proof}
For part (i), we first show the upper bound of $V_1(\mT)$. We will work on $\mA_n' = \mA_n\cap \{ \varepsilon: \|\varepsilon\|_2^2\leq 2n\},$ where $\mA_n$ is defined in \eqref{eq:event_set}. Since $\|\varepsilon\|_2^2\sim \chi^2(n),$ by applying the tail bound in \cite{ghosh2021exponential} (Theorem 1), we have $\mathbb{P}( \|\varepsilon\|_2^2> 2n)\leq \e^{-n/8}.$ Therefore, $\mathbb{P}(\mA_n')\geq 1-4/n-\e^{-n/8}$. As $\bm\nu=\varepsilon$, we obtain the bound by observing that on the event $\mA_n'$ with $p:=2^L,$ the following holds.
\begin{align*}
Y'Y&=\|\bm X\b^*\|_2^2 + \|\bm \nu\|_2^2+2\bm\nu'\bm X\b^*\\
&\leq n \|\b^*\|_2^2+ 2n+2|\bm\nu'\bm X\b^*|\\
&\leq n\,p\,C_{f_0}^2 + 2n+4\|\b^*\|_2\sqrt{n^2\log p}\\
&\leq n\,p\,\left(C_{f_0}^2+ 2/p + 4C_{f_0} \sqrt{\frac{\log p}{p}} \right)\leq  n\,2^{L}\,(C_{f_0}+2)^2, 
\end{align*}
where we use the assumption that $|\beta^*_{lk}| \leq C_{f_0}$. The other upper bound in part (i) is trivial since for any tree $\mT\in \bT_L$ we have $\mT_{int}\leq 2^L$. 
For part (ii), we observe that the column space spanned by $\bm X_{\mT}$ is a subspace of the column space spanned by $\bm X_{\wt \mT}$. Therefore,
\begin{align*}
V_1(\wt\mT)/V_1(\mT)=\exp\left\{\frac{1}{2^L\,(C_{f_0}+2)^2(n+1)}\left( Y'(P_{\mT}/n-P_{\wt\mT}/n)Y\right)\right\}\leq 1.
\end{align*} For part (iii), we have $ |\wt\mT\backslash\mT^*|-|\mT\backslash\mT^*|= k\leq 2^{L}$, and $V_2(\wt\mT) /V_2(\mT) = e^{k/2^{L}}$. The result follows by using the two inequalities as in \cite{zhou2021dimension}
\begin{equation}\label{two_ineq}
\e^x\leq 1+2x,~~\e^{-x}\leq 1-\frac{x}{2},~~\forall x\in[0,1].
\end{equation}

\end{proof}

\subsubsection{Drift condition for overfitted models ($R_2$)}
\begin{lemma} \label{lem:overfit_help} Recall the definition of $w_p$ and $w_g$ in \eqref{eq:wp}. Under the same assumptions of Theorem \ref{thm:coupling}, for any overfitted tree $\mT\in \bT_L$, 
\begin{itemize}
\item[(i)] $Z_g(\mT)\leq  \frac{n^{-(c-5/2)}}{2}$.
\item[(ii)] For any subtree $\wt\mT\subset \mT$, $w_p(\wt\mT|\mT)=n^{c-3/2}$ if $\wt\mT$ contains all the signal nodes, i.e., $\wt\mT\supset\mT^*$, and otherwise, $w_p(\wt\mT|\mT)=1$.
\item[(iii)] $Z_p(\mT)\leq  |a_{\mT}|+|b_{\mT}| \, n^{c-3/2},$ where $a_\mT$ and $b_\mT$ in the decomposition $N_p(\mT) = a_\mT\cup b_\mT$ are defined as follows.
\begin{itemize}
\item[(a)] (Classical) $a_{\mT} =\mP(\mT)\cap \mT^*$ and $b_{\mT}= \mP(\mT)\backslash\mT^*$. 
\item[(b)] (Twiggy) Denote by $W(\mT)$ all the twigs existing on $\mT$ that end at a pre-terminal node (the Twiggy prune candidates). \newline $a_{\mT} =\{W\in W(\mT)| W\cap \mT^* \neq \emptyset\}$ and $b_{\mT} =\{W\in W(\mT)| W\cap \mT^* = \emptyset\}$. 
\end{itemize}
\end{itemize}
\end{lemma}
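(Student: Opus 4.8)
The plan is to deduce all three parts from the two nested–model posterior ratio bounds that are already available on the event $\mA_n$ from \eqref{eq:event_set}: if $\wt\mT\supset\mT$ overfits $\mT$ by $k\geq1$ \emph{signal-less} internal nodes then $\Pi(\wt\mT\C Y)/\Pi(\mT\C Y)\leq n^{-k(c-3/2)}$ (the nested version of \eqref{eq:lem_a}), while if a deleted node is a signal node the reverse one-node step multiplies the posterior by a super-polynomially large factor $\geq n^{(A^2\log n)/8}$ (this is exactly \eqref{eq:lem_b}, which uses $|\beta^*_{lk}|>A\log n/\sqrt n$ and $\|\bm X'\bm\varepsilon\|_\infty\leq2\sqrt{n\log n}$ on $\mA_n$). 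I read the ``subtree $\wt\mT\subset\mT$'' in (ii) as a PRUNE candidate $\wt\mT\in\mN_p(\mT)$ — a single pre-terminal node removed in the classical case, an entire twig $[(\wt l,\wt k)\leftrightarrow(l^*,k^*)]$ with pre-terminal bottom removed in the Twiggy case — since this is all that is used in (iii) and in the drift estimates of Proposition \ref{prop:v2_approx}.

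For part (ii), the key dichotomy is whether the deleted node/twig meets $\mT^*_{int}$. If it does not, then (since $\mT\supseteq\mT^*$) we still have $\wt\mT\supseteq\mT^*$ and $\mT$ overfits $\wt\mT$ by $k\geq1$ signal-less nodes, so \eqref{eq:lem_a} gives $\Pi(\wt\mT\C Y)/\Pi(\mT\C Y)\geq n^{k(c-3/2)}\geq n^{c-3/2}\geq1$ and hence $w_p(\wt\mT\C\mT)=1\vee\big(n^{c-3/2}\wedge n^{c-3/2}\big)=n^{c-3/2}$ by \eqref{eq:wp}. If the deleted structure does meet $\mT^*_{int}$, then by minimality of $\mT^*$ it contains a node lying on the path to some signal, and because a twig is the \emph{entire} branch below its top node that signal node itself is deleted; writing the prune as a sequence of one-node steps with the signal removed last, the signal step contributes a factor $\leq n^{-(A^2\log n)/8}$ by \eqref{eq:lem_b}, whereas each of the at most $k-1\leq L-1\leq\log_2 n$ remaining signal-less removals contributes the crude factor $\leq 4n^{c+1/2}$ read off the marginal-likelihood form \eqref{eq:ratio_post_overfit}; multiplying, $\Pi(\wt\mT\C Y)/\Pi(\mT\C Y)\leq(4n^{c+1/2})^{L-1}n^{-(A^2\log n)/8}\leq1$ for $A$ large enough, so $w_p(\wt\mT\C\mT)=1$. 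This last bound is the step I expect to be the delicate one: for the Twiggy prune one must check that the logarithmically many spurious $n^{c+1/2}$ factors cannot overpower the single $n^{-(A^2\log n)/8}$ factor, which is precisely where ``$A$ large enough'' (depending on $c$) is consumed — and also why the literal ``any subtree'' statement is false, since a general subtree may shed arbitrarily many signal-less nodes.

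For part (i), any GROW candidate $\wt\mT\in\mN_g(\mT)$ of an overfitted $\mT$ is again overfitted and adds $k\geq1$ signal-less internal nodes (classically $k=1$; for Twiggy the attached twig lies entirely below a former leaf of $\mT$ and is therefore new), so \eqref{eq:lem_a} gives $\Pi(\wt\mT\C Y)/\Pi(\mT\C Y)\leq n^{-k(c-3/2)}\leq n^{-(c-3/2)}<n^{(A^2\log n)/8}$ and hence, by \eqref{eq:wp}, $w_g(\wt\mT\C\mT)=\Pi(\wt\mT\C Y)/\Pi(\mT\C Y)\leq n^{-(c-3/2)}$. Summing over the $\leq 2^{L}\leq n/2$ candidates — a bound valid both for the classical count $|\mT_{ext}|$ and the Twiggy count $|\mT_{full,int}^{L}\backslash\mT_{int}|$, as recorded in the proof of Theorem \ref{thm:coupling} — yields $Z_g(\mT)\leq(n/2)\,n^{-(c-3/2)}=\tfrac12 n^{-(c-5/2)}$.

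For part (iii), note that the deleted structure meets $\mT^*_{int}$ if and only if $\wt\mT$ fails to contain all the signal nodes; hence the index set $a_\mT$ (pre-terminal nodes lying in $\mT^*$, resp.\ twigs meeting $\mT^*$) enumerates exactly the PRUNE candidates with $w_p=1$ by case two of (ii), and $b_\mT=N_p(\mT)\setminus a_\mT$ those with $w_p=n^{c-3/2}$ by case one of (ii). Therefore $Z_p(\mT)=\sum_{\wt\mT\in a_\mT}1+\sum_{\wt\mT\in b_\mT}n^{c-3/2}=|a_\mT|+|b_\mT|\,n^{c-3/2}$, which gives the claimed bound. All estimates above hold on $\mA_n$, of probability at least $1-4/n$, a fortiori at least $1-4/n-\e^{-n/8}$.
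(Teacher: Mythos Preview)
Your proposal is correct and follows the same route as the paper: part (i) via \eqref{eq:lem_a} and the neighborhood bound $|\mN_g(\mT)|\leq n/2$, part (ii) by the dichotomy on whether the pruned piece meets $\mT^*_{int}$ (using \eqref{eq:lem_a} in one case and the signal bound \eqref{eq:lem_b}/\eqref{eq:posterior_ratio_overfit2} in the other), and part (iii) by summing (ii). Your treatment of the Twiggy ``loses a signal'' case is more explicit than the paper's --- which simply invokes \eqref{eq:lem_b} --- but it is the same argument: by orthogonality the posterior ratio factorises over the removed nodes, the single signal factor $\leq n^{-(A^2\log n)/8}$ dominates the at most $L-1\lesssim\log n$ crude factors $\lesssim n^{c+1/2}$, and this is precisely what \eqref{eq:posterior_ratio_overfit2} already encodes for large enough $A$. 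Your remark that ``any subtree'' must be read as ``any PRUNE candidate'' is also apt and matches how the lemma is used.
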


\begin{proof} 
(i) By \eqref{eq:lem_a}, \[Z_g(\mT)=\sum_{\wt\mT\in\mN_g(\mT)}\left(B(\mT,\wt\mT)\wedge  n^{ (A^2\log n )/2 }\right)\leq |\mN_g(\mT)| n^{-(c-3/2)}\leq \frac{n^{-(c-5/2)}}{2}.\] (ii) When $\wt\mT\supset\mT^*$, by applying \eqref{eq:lem_a}, we get $B(\mT,\wt\mT)\geq n^{c-3/2}$, and thus $w_p(\wt\mT|\mT)=n^{c-3/2}$ by definition in \eqref{eq:wp}. Likewise, when $\wt\mT$ loses a signal node compared with $\mT$, by \eqref{eq:lem_b}, we have $B(\mT,\wt\mT)\leq n^{- (A^2\log n )/2 }\leq 1$. Therefore, it follows from  definition \eqref{eq:wp} that $w_p(\wt\mT|\mT)=1$. (iii) (a) is apparent by definition \eqref{eq:wp} and that the prune candidates are in $\mP(\mT)$; For $\wt\mT = \mT\backslash\{(l,k)\}$, $B(\mT,\wt\mT) = 1$ if $(l,k)\in \mP(\mT)\cap \mT^*$ and $B(\mT,\wt\mT) = n^{c-3/2}$ if $(l,k)\in \mP(\mT)\backslash\mT^*$. Similarly, for (b), we apply the same reasoning to the twiggy candidate pool $W(\mT)$.
\end{proof}

\begin{lemma} \label{lem:key_R2} Under the same assumptions of Theorem \ref{thm:coupling}, for any overfitted tree $\mT\in \bT_L$ such that $\mT\neq\mT^*$,
\begin{align*}
\sum_{\wt\mT\in \mN_p(\mT)}R_2(\mT,\wt\mT)P(\mT,\wt\mT)&\leq -\frac{1}{2^{L+2}}\frac{1}{(1+n^{5/2-c})}+n^{1- (A^2\log n )/8 },\\
\sum_{\wt\mT\in \mN_g(\mT)}R_2(\mT,\wt\mT)P(\mT,\wt\mT)&\leq \frac{M}{n^{c-3/2}},
\end{align*}
where $M$ is defined as 1 for the Bayesian CART and $2L$ for the Twiggy CART.
\end{lemma}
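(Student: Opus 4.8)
The plan is to bound the two sums separately, starting from the decomposition used in Proposition~\ref{prop:v2_approx},
$\frac{(PV_2)(\mT)}{V_2(\mT)}=1+\sum_{\wt\mT\in\mN_g(\mT)}R_2(\mT,\wt\mT)P(\mT,\wt\mT)+\sum_{\wt\mT\in\mN_p(\mT)}R_2(\mT,\wt\mT)P(\mT,\wt\mT)$, and working throughout on the event $\mA_n'$ of Lemma~\ref{lem:v_property} so that the prior--likelihood estimates \eqref{eq:lem_a}, \eqref{eq:lem_b} and the drift-ratio bounds of Lemma~\ref{lem:v_property} are available. For the GROW sum, every grow-neighbour $\wt\mT$ of an overfitted $\mT$ is itself overfitted, with $k=|\wt\mT_{int}\backslash\mT_{int}|$ extra, necessarily signal-less, nodes and $1\le k\le L$. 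I would combine $R_2(\mT,\wt\mT)\le 2k/2^{L}$ from Lemma~\ref{lem:v_property}(iii), the universal inequality $P(\mT,\wt\mT)\le B(\mT,\wt\mT)$ from \eqref{eq:trans_bound}, and $B(\mT,\wt\mT)\le n^{-k(c-3/2)}\le n^{-(c-3/2)}$ from \eqref{eq:lem_a}; summing over $\wt\mT$ and using $|\mN_g(\mT)|\le 2^{L-1}$ with $k\equiv1$ in the classical case, respectively $|\mN_g(\mT)|\le 2^{L}$ with $k\le L$ in the Twiggy case, gives $Mn^{-(c-3/2)}$ with $M=1$, resp.\ $M=2L$.

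For the PRUNE sum I would split $\mN_p(\mT)$ into \emph{bad} prunes (those $\wt\mT$ with $\wt\mT\not\supseteq\mT^*$) and \emph{good} prunes ($\wt\mT\supseteq\mT^*$). The combinatorial heart of the argument is that a bad prune always excises a true signal node of $\mathcal B$: a pre-terminal node of $\mT$ lying in $\mT^*_{int}$ must be a leaf of $\mT^*_{int}$ (else it has an internal child in $\mT^*\subseteq\mT$, contradicting pre-terminality), and in the Twiggy case the deepest $\mT^*_{int}$-node on a pruned twig is likewise a leaf of $\mT^*_{int}$ --- its off-twig child cannot be internal in $\mT$, since the descendants of the twig's top node form a path with at most one internal node per level --- while by minimality of $\mT^*$ every leaf of $\mT^*_{int}$ lies in $\mathcal B$. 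Hence \eqref{eq:lem_b} gives $P(\mT,\wt\mT)\le B(\mT,\wt\mT)\le n^{-(A^2\log n)/8}$ for every bad prune, and since a pruned tree underfits one has $R_2(\mT,\wt\mT)\le\e-1$ (as $V_2(\wt\mT)/V_2(\mT)=\exp\{1-|\mT_{int}|/2^{L}\}\le\e$); with $|\mN_p(\mT)|\le n/2$ the bad prunes therefore contribute at most $(\e-1)(n/2)n^{-(A^2\log n)/8}\le n^{1-(A^2\log n)/8}$.

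The same combinatorial fact, applied to the deepest redundant node of $\mT$ (whose children must be leaves), shows $b_\mT\neq\emptyset$, so a good prune exists. For each $\wt\mT\in b_\mT$ I would argue that $R_2(\mT,\wt\mT)\le -1/2^{L+1}$ by Lemma~\ref{lem:v_property}(iii); that $w_p(\wt\mT\C\mT)=n^{c-3/2}$ by Lemma~\ref{lem:overfit_help}(ii); and that the reverse move is overwhelmingly favourable, since $w_g(\mT\C\wt\mT)=\Pi(\mT\C Y)/\Pi(\wt\mT\C Y)$ and $Z_g(\wt\mT)\le\tfrac12 n^{-(c-5/2)}$ by Lemma~\ref{lem:overfit_help}(i) force $B(\mT,\wt\mT)S(\wt\mT\to\mT)=1/(2Z_g(\wt\mT))\ge n^{c-5/2}\ge1\ge S(\mT\to\wt\mT)$, so the Metropolis acceptance equals one and $P(\mT,\wt\mT)=S(\mT\to\wt\mT)\ge\tfrac12 n^{c-3/2}/Z_p(\mT)$. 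Summing these non-positive terms over $b_\mT$ and inserting $Z_p(\mT)\le|a_\mT|+|b_\mT|n^{c-3/2}$ from Lemma~\ref{lem:overfit_help}(iii) yields $\sum_{\wt\mT\in b_\mT}R_2(\mT,\wt\mT)P(\mT,\wt\mT)\le-\frac{1}{2^{L+2}}\cdot\frac{|b_\mT|n^{c-3/2}}{|a_\mT|+|b_\mT|n^{c-3/2}}=-\frac{1}{2^{L+2}}\cdot\frac{1}{1+(|a_\mT|/|b_\mT|)n^{-(c-3/2)}}$, and since $|a_\mT|/|b_\mT|\le|a_\mT|\le|\mN_p(\mT)|\le n$ this is at most $-\frac{1}{2^{L+2}(1+n^{5/2-c})}$. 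Adding the good- and bad-prune contributions gives the stated PRUNE bound.

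I expect the good-prune estimate to be the main obstacle: one must verify that the Metropolis ratio is $\ge1$ (so that $P(\mT,\wt\mT)$ is the bare proposal probability $\tfrac12 w_p/Z_p$ rather than something strictly smaller) and then see the factor $|b_\mT|n^{c-3/2}$ produced by summing over all redundant pre-terminal nodes, or redundant twigs, cancel against $Z_p(\mT)$ up to the $n^{5/2-c}$ slack. The other delicate ingredient is the tree-combinatorial claim --- that any prune which keeps $\mT^*$ inside must delete a genuine signal node of $\mathcal B$ --- which is what makes the $n^{-(A^2\log n)/8}$ decay available for the bad prunes and simultaneously guarantees the existence of a good prune.
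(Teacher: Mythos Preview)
Your proof is correct and follows essentially the same line as the paper's: the same $a_\mT/b_\mT$ split of $\mN_p(\mT)$, the same use of Lemma~\ref{lem:v_property}(iii) together with Lemma~\ref{lem:overfit_help}(i)--(iii) to force acceptance one on $b_\mT$ and extract the $-\tfrac{1}{2^{L+2}(1+n^{5/2-c})}$ term, and the same crude $P\le B\le n^{-(c-3/2)}$ bound for the grow sum. Your explicit combinatorial justification that every ``bad'' prune from $a_\mT$ necessarily removes a signal node of $\mathcal B$ (via the leaf-of-$\mT^*_{int}$ argument, using that a Twiggy prune candidate has all descendants of its top node lying on the twig) is a detail the paper invokes through \eqref{eq:lem_b} and Lemma~\ref{lem:overfit_help}(ii) without spelling out, and is needed in the Twiggy case under Assumption~\ref{ass:f_classic}(b).
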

\begin{proof}
\noindent\textbf{The PRUNE movement.} Recall the definitions of $\alpha_\mT$ and $b_\mT$ in Lemma \ref{lem:overfit_help} (iii). First, consider $\wt\mT \in b_\mT$. We know that $b_\mT$ is non-empty because $\mT\neq\mT^*$, which means there exists in $\mN_p(\mT)$ a 1-node ($k$-node for Twiggy) subtree $\wt\mT\subset\mT$ such that $\wt\mT_{int}\supseteq\mT^*_{int}$. By \eqref{eq:lem_a}, we have $B(\wt\mT,\mT)\leq n^{-(c-3/2)}\leq n^{ (A^2\log n )/8}$. Therefore, for such $\wt\mT$, $w_g(\mT|\wt\mT)=B(\wt\mT,\mT)$, and thus by applying Lemma \ref{lem:overfit_help} (i), we have
\begin{align*}
B(\mT,\wt\mT)S(\wt\mT\rightarrow \mT)&\geq B(\mT,\wt\mT)\frac{w_g(\mT|\wt\mT)}{2Z_g(\wt\mT)}= \frac{1}{2Z_g(\wt\mT)}\geq {n^{c-5/2}}\geq 1.
\end{align*}
Therefore, by \eqref{eq:trans_bound}, we have $P(\mT,\wt\mT)=S(\mT\rightarrow \wt\mT)$. Since the true signals contained in $\mT$ and $\wt\mT$ are the same, by definition \eqref{eq:K_def} and \eqref{eq:wp}, we have $S(\mT\rightarrow \wt\mT)\geq S_{PRUNE} (\mT\rightarrow \wt\mT)/2 = n^{c-3/2}/2Z_p(\mT)$. Then, applying Lemma \ref{lem:v_property} (iii), and then Lemma \ref{lem:overfit_help} (iii), we find that \[-R_2(\mT,\wt\mT)P(\mT,\wt\mT)\geq \frac{(|\mT|-|\wt\mT|)}{2^{L+1}}S(\mT\rightarrow \wt\mT)\geq \frac{n^{c-3/2}}{2^{L+2}(|a_{\mT}|+|b_{\mT}| \, n^{c-3/2})}.\] 
Since $|b_{\mT}|\geq 1$, we have for $c>5/2$,
\begin{equation}\label{eq:contrast1}
    -\sum_{\wt\mT\in b_{\mT}}R_2(\mT,\wt\mT)P(\mT,\wt\mT)\geq  \frac{|b_\mT| n^{c-3/2}}{2^{L+2}(n+|b_{\mT}| \, n^{c-3/2})} \geq\frac{1}{2^{L+2}}\frac{1}{1+n^{5/2-c}}.
\end{equation}
Note that from \eqref{eq:trans_bound} and \eqref{eq:lem_b}, we have 
\[\sum_{\wt\mT\in a_{\mT}}R_2(\mT,\wt\mT)P(\mT,\wt\mT)\leq |a_\mT|\,(\e-1) \, n^{- (A^2\log n )/8 }\leq n^{1- (A^2\log n )/8 },\]where we used $|a_\mT|\leq 2^L\leq n/2$. Since $\mN_p(\mT) = a_\mT\cup b_\mT$, we have the result of the lemma.

\medskip
\medskip

\noindent\textbf{The GROW movement.} There is no additional signal node that can be added by GROW when the current state $\mT\in\bT_L$ is overfitted. Therefore, for any $\wt\mT\supset\mT$, from \eqref{eq:trans_bound} and \eqref{eq:lem_a}, 
\begin{equation}\label{eq:application_of_trans_bound}
P(\mT,\wt\mT)\leq B(\mT,\wt\mT)\leq \frac{1}{n^{c-3/2}}.
\end{equation}
With Lemma \ref{lem:v_property} (iii) with $k=|\wt\mT_{int}|-|\mT_{int}|$, we obtain that
\[\sum_{\wt\mT\in \mN_g(\mT)}R_2(\mT,\wt\mT)P(\mT,\wt\mT)\leq\sum_{\wt\mT\in \mN_g(\mT)}\frac{2k}{2^{L}}\frac{1}{n^{c-3/2}}.\]
Since $|\mN_g(\mT)|\leq 2^{L-1}$, and $k=1$ in the Bayesian CART ($M=1$), and $|\mN_g(\mT)|\leq 2^L$ and $k\leq L$ in the Twiggy CART ($M=2L$), we get the results.
\end{proof}

\subsubsection{Drift condition for underfitted models ($R_1$)}\label{eq:sec_R1}
This section shares the same proof process for both the Twiggy CART and Bayesian CART algorithms, based on the observation at the beginning of Section \ref{pf:prop1}.

\begin{lemma} \label{lem:overfit_help2} 
Under the same assumptions of Theorem \ref{thm:coupling}, for any underfitted tree $\mT\in \bT_L$ i.e., for any $\mT\not\supset \mT^*$, 
\begin{itemize}
\item[(i)] $Z_g(\mT)\geq  n^{ (A^2\log n )/8 }$.
\item[(ii)] $Z_p(\mT)\leq n^{c-1/2}$. 
\end{itemize}
\end{lemma}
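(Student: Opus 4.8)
The plan is to read both bounds straight off the definitions of the normalizing constants $Z_g$ and $Z_p$ in \eqref{eq:K_def}--\eqref{eq:wp}, using only the prior--posterior ratio estimates \eqref{eq:lem_a}--\eqref{eq:lem_b} and the crude neighbour-count bound $\max\{|\mN_g(\mT)|,|\mN_p(\mT)|\}\leq 2^L\leq n/2$ recorded at the start of Section \ref{pf:prop1}. As there, everything is carried out on the event $\mA_n$ of \eqref{eq:event_set} (equivalently the event $\mA_n'$ of Lemma \ref{lem:v_property}), on which \eqref{eq:lem_a}--\eqref{eq:lem_b} hold.

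For part (i), the crucial observation is that an underfitted $\mT$ misses at least one signal node of $\mT^*$. By \eqref{eq:lem_b}, which encodes Assumption \ref{ass:f_classic}, there is then a neighbour $\wt\mT\in\mN_g(\mT)$ --- a single-node GROW for classical Bayesian CART, or a $k$-node twig GROW for Twiggy Bayesian CART --- that adds a signal node and hence satisfies $\Pi(\wt\mT\C Y)/\Pi(\mT\C Y)\geq n^{(A^2\log n)/8}$. For that particular $\wt\mT$ the hard cap in $w_g(\wt\mT\C\mT)=\bigl(\Pi(\wt\mT\C Y)/\Pi(\mT\C Y)\bigr)\wedge n^{(A^2\log n)/8}$ is active, so $w_g(\wt\mT\C\mT)=n^{(A^2\log n)/8}$. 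Since every summand of $Z_g(\mT)=\sum_{\wt\mT\in\mN_g(\mT)}w_g(\wt\mT\C\mT)$ is nonnegative, keeping only this one term gives $Z_g(\mT)\geq n^{(A^2\log n)/8}$, which is (i).

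For part (ii), I will use the trivial upper bound $w_p(\wt\mT\C\mT)=1\vee\bigl(\Pi(\wt\mT\C Y)/\Pi(\mT\C Y)\bigr)\wedge n^{c-3/2}\leq n^{c-3/2}$, valid because $c>3$ forces $n^{c-3/2}\geq 1$ so the threshold dominates. Summing over the prune candidates and invoking $|\mN_p(\mT)|\leq 2^L\leq n/2$ yields $Z_p(\mT)\leq |\mN_p(\mT)|\,n^{c-3/2}\leq \frac{1}{2}n^{c-1/2}\leq n^{c-1/2}$, which is (ii). Note that (ii) does not use underfittedness at all --- the bound is valid for every $\mT\in\bT_L$ --- so only (i) draws on the structural content of \eqref{eq:lem_b}.

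There is no genuine obstacle: both inequalities are one-line consequences of the weight definitions, the ratio bound \eqref{eq:lem_b}, and the neighbour-count bound. The only points needing a moment's care are (a) that the $\wedge$ in the definition of $w_g$ is a genuine cap, so the distinguished neighbour contributes exactly $n^{(A^2\log n)/8}$ rather than something that could fall short, and (b) that \eqref{eq:lem_b} holds uniformly over underfitted $\mT$ for \emph{both} proposal schemes --- for Twiggy Bayesian CART one should note that a twig rooted at the external node of $\mT$ closest to a missing signal node of $\mT^*$ is always an admissible GROW move, which is precisely what \eqref{eq:lem_b} asserts.
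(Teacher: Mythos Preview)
Your proof is correct and follows essentially the same approach as the paper: for (i) you invoke \eqref{eq:lem_b} to find a GROW neighbour whose weight hits the cap $n^{(A^2\log n)/8}$, and for (ii) you combine the trivial bound $w_p\leq n^{c-3/2}$ with $|\mN_p(\mT)|\leq n$. Your write-up is more explicit than the paper's (which dispatches both parts in a single sentence each), and your side remark that (ii) requires no underfittedness is accurate.
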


\begin{proof} 
(i) By \eqref{eq:lem_b}, we can always find a proposal $\wt\mT\in\mN_g(\mT)$ such that $B(\mT,\wt\mT)\geq n^{ (A^2\log n )/8 }$. (ii) is apparent by definition \eqref{eq:wp} and that $|\mN_p(\mT)|\leq n$ for both Bayesian and Twiggy CART.

\end{proof}

\begin{lemma}\label{lem:b_connect} Suppose
$B(\mT,\wt\mT)\geq n^{a}$ for some $a\in \R$ and define 
\begin{equation}\label{eq:b}
    b=\frac{1}{2^{L-1}(C_{f_0}+2)^2n}\left(a\log n -\log \left( \frac{\Pi(\wt\mT)}{\Pi(\mT)}\right) - \frac{|\mT_{ext}|-|\wt\mT_{ext}|}{2}\log (1+n) \right).
\end{equation} If $b\in[0,1],$ then $-R_1(\mT,\wt\mT)\geq b/2$.
\end{lemma}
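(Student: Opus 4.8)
The statement is a deterministic algebraic identity together with one elementary convexity bound — no probabilistic event is needed, the hypotheses $B(\mT,\wt\mT)\ge n^{a}$ and $b\in[0,1]$ do all the work. The plan is to reduce both the posterior ratio $B(\mT,\wt\mT)$ and the drift ratio $R_1(\mT,\wt\mT)$ to the single quadratic form $Y'(P_{\wt\mT}-P_\mT)Y$, and then close the loop. First I would make the posterior ratio explicit. Using the closed form \eqref{eq:compare_post} together with the regular-design identity $\bm X'\bm X=nI$ (so that $\Sigma_\mT=c_n(\bm X_\mT'\bm X_\mT)^{-1}=\tfrac{1}{n+1}I$ and $\bm X_\mT\Sigma_\mT\bm X_\mT'=\tfrac{n}{n+1}\cdot\tfrac{P_\mT}{n}$, a scalar multiple of the orthogonal projection onto the column span of $\bm X_\mT$), taking logs and differencing $N_Y(\wt\mT)$ against $N_Y(\mT)$ cancels the $Y'Y$ term and the $(2\pi)^{n/2}$ factor, leaving
\[
\log\frac{N_Y(\wt\mT)}{N_Y(\mT)}=\frac{1}{2(n+1)}\,Y'(P_{\wt\mT}-P_\mT)Y+\frac{|\mT_{ext}|-|\wt\mT_{ext}|}{2}\log(1+n).
\]
Since $\log B(\mT,\wt\mT)=\log\tfrac{\Pi(\wt\mT)}{\Pi(\mT)}+\log\tfrac{N_Y(\wt\mT)}{N_Y(\mT)}$ (the normalizing constant of the posterior is common to $\mT$ and $\wt\mT$), this rearranges to
\[
Y'(P_{\wt\mT}-P_\mT)Y=2(n+1)\!\left[\log B(\mT,\wt\mT)-\log\frac{\Pi(\wt\mT)}{\Pi(\mT)}-\frac{|\mT_{ext}|-|\wt\mT_{ext}|}{2}\log(1+n)\right].
\]

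Next I would relate the drift ratio to the same quantity. From \eqref{eq:rescuer}, $\log\bigl(V_1(\wt\mT)/V_1(\mT)\bigr)=\tfrac{1}{2^{L}(C_{f_0}+2)^2(n+1)}\,Y'\bigl(\tfrac{P_\mT}{n}-\tfrac{P_{\wt\mT}}{n}\bigr)Y=-\tfrac{1}{2^{L}(C_{f_0}+2)^2 n(n+1)}\,Y'(P_{\wt\mT}-P_\mT)Y$. Substituting the previous display and cancelling the $(n+1)$ factors gives
\[
\log\frac{V_1(\wt\mT)}{V_1(\mT)}=-\frac{1}{2^{L-1}(C_{f_0}+2)^2\,n}\left[\log B(\mT,\wt\mT)-\log\frac{\Pi(\wt\mT)}{\Pi(\mT)}-\frac{|\mT_{ext}|-|\wt\mT_{ext}|}{2}\log(1+n)\right].
\]
Now invoke the hypothesis $B(\mT,\wt\mT)\ge n^{a}$, i.e. $\log B(\mT,\wt\mT)\ge a\log n$, so the bracket is at least $2^{L-1}(C_{f_0}+2)^2 n\,b$ with $b$ exactly as in \eqref{eq:b}. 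Because the prefactor is negative, multiplying through reverses the inequality and yields $\log\bigl(V_1(\wt\mT)/V_1(\mT)\bigr)\le -b$, hence $V_1(\wt\mT)/V_1(\mT)\le \e^{-b}$ and therefore $-R_1(\mT,\wt\mT)=1-V_1(\wt\mT)/V_1(\mT)\ge 1-\e^{-b}$.

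Finally, since $b\in[0,1]$, I would apply the elementary bound $\e^{-x}\le 1-x/2$ for $x\in[0,1]$ — the same inequality already recorded in \eqref{two_ineq} — to obtain $1-\e^{-b}\ge b/2$, and conclude $-R_1(\mT,\wt\mT)\ge b/2$, which is the claim. I do not expect a genuine obstacle here; the only points requiring care are the bookkeeping of the $\tfrac{n}{n+1}$ and $\tfrac1n$ factors coming from $\Sigma_\mT$ and from the normalization $P_\mT/n$ of the projection (they must combine so that the two powers of $(n+1)$ cancel and leave the clean constant $2^{L-1}(C_{f_0}+2)^2 n$ appearing in \eqref{eq:b}), and correctly tracking the sign flip when dividing by the negative prefactor. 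The use of the regular-design assumption to identify $\bm X_\mT\Sigma_\mT\bm X_\mT'$ with a multiple of the orthogonal projection is the standing setup of the paper and needs no further justification.
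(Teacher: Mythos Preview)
Your proof is correct and matches the paper's argument essentially step for step: the paper writes the single identity $B(\mT,\wt\mT)=\frac{\Pi(\wt\mT)}{\Pi(\mT)}(1+n)^{(|\mT_{ext}|-|\wt\mT_{ext}|)/2}\bigl(V_1(\wt\mT)/V_1(\mT)\bigr)^{-n\,2^{L-1}(C_{f_0}+2)^2}$ directly, whereas you derive the same relation by passing through the intermediate quadratic form $Y'(P_{\wt\mT}-P_\mT)Y$, and both then use $\log B\ge a\log n$ together with the inequality $\e^{-x}\le 1-x/2$ from \eqref{two_ineq} to conclude.
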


\begin{proof} 
From the posterior in \eqref{eq:compare_post}, we relate $B_1$ to $R_1$ by 
\[B(\mT,\wt\mT)=\frac{\Pi(\wt\mT)}{\Pi(\mT)}(1+n)^{\frac{|\mT_{ext}|-|\wt\mT_{ext}|}{2}}\left(\frac{V_1(\wt\mT)}{V_1(\mT)}\right)^{-n\,2^{L-1}(C_{f_0}+2)^2}.\] Therefore, it follows by the assumption $\log B(\mT,\wt\mT)\geq a\log n$ that 
\[a\log n\leq \log \left( \frac{\Pi(\wt\mT)}{\Pi(\mT)}\right) + \frac{|\mT_{ext}|-|\wt\mT_{ext}|}{2}\log (1+n) - n\,2^{L-1}(C_{f_0}+2)^2\log (1+R_1(\mT,\wt\mT)).\]
Therefore, $\log (1+R_1(\mT,\wt\mT))\leq -b$, which means $-R_1(\mT,\wt\mT)\geq 1-e^{-b}$. If $b\in[0,1]$, we apply the second inequality in \eqref{two_ineq} to get $-R_1(\mT,\wt\mT)\geq b/2$.
\end{proof}

\begin{lemma} \label{lem:underfit_coupling}
Under the same assumptions of Theorem \ref{thm:coupling}, for any underfitted tree $\mT\in \bT_L$ i.e., for any $\mT\not\supset \mT^*$, 
\begin{align*}
\sum_{\wt\mT\in \mN_g(\mT)}R_1(\mT,\wt\mT)P(\mT,\wt\mT)&\leq -\frac{(A^2/8)\log^2 n}{2^{L+2}\, n(C_{f_0}+2)^2},\\
\sum_{\wt\mT\in \mN_p(\mT)}R_1(\mT,\wt\mT)P(\mT,\wt\mT)&\leq \frac{\e-1}{2n^{(A^2/8\log n-1)}}.
\end{align*}
The bounds are for both the Bayesian and Twiggy CART.
\end{lemma}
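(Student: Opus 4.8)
The plan is to prove Lemma~\ref{lem:underfit_coupling} by splitting into the GROW and PRUNE contributions and controlling each via the structural lemmas already established. For the GROW bound, the key observation is that because $\mT$ is underfitted ($\mT\not\supset\mT^*$), Assumption~\ref{ass:f_classic} guarantees via \eqref{eq:lem_b} that there is at least one neighbor $\wt\mT\in\mN_g(\mT)$ with $B(\mT,\wt\mT)\geq n^{(A^2\log n)/8}$. I would first isolate such a ``rescuer'' tree $\wt\mT^\dagger$. By Lemma~\ref{lem:v_property}(ii), every GROW neighbor has $R_1(\mT,\wt\mT)\leq 0$, so the whole sum $\sum_{\wt\mT\in\mN_g(\mT)}R_1(\mT,\wt\mT)P(\mT,\wt\mT)$ is dominated by the single term coming from $\wt\mT^\dagger$, i.e.\ I may drop all other (non-positive) terms. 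Thus it suffices to lower-bound $-R_1(\mT,\wt\mT^\dagger)P(\mT,\wt\mT^\dagger)$.

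For that single term I would proceed in two steps. First, I apply Lemma~\ref{lem:b_connect} with $a=(A^2\log n)/8$: I need to check that the resulting $b$ in \eqref{eq:b} lies in $[0,1]$. The prior ratio $\Pi(\wt\mT)/\Pi(\mT)$ and the $|\mT_{ext}|-|\wt\mT_{ext}|$ term are both polynomial in $n$ (the prior ratio is at least $n^{-cK}$ up to constants, with $K\leq L\leq L_{max}$), so $b$ is of order $(A^2/8)\log^2 n / (2^{L}n (C_{f_0}+2)^2)$ up to lower-order corrections, which is in $[0,1]$ for $n$ large — here I will need $c>3$ (matching the theorem hypothesis) to absorb the prior ratio and the $\log(1+n)$ correction without spoiling the sign/magnitude of $b$. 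This yields $-R_1(\mT,\wt\mT^\dagger)\geq b/2 \gtrsim (A^2/8)\log^2 n/(2^{L+1}n(C_{f_0}+2)^2)$. Second, I lower-bound the transition probability $P(\mT,\wt\mT^\dagger)$: since $B(\mT,\wt\mT^\dagger)\geq n^{(A^2\log n)/8}\geq 1$, by \eqref{eq:trans_bound} we have $P(\mT,\wt\mT^\dagger)=S(\mT\to\wt\mT^\dagger)$, and by definition \eqref{eq:K_def}–\eqref{eq:wp} the informed GROW proposal gives $S(\mT\to\wt\mT^\dagger)\geq \tfrac12\, w_g(\wt\mT^\dagger\mid\mT)/Z_g(\mT)$; using $w_g(\wt\mT^\dagger\mid\mT)=n^{(A^2\log n)/8}$ (the cap is attained) together with the matching bound $Z_g(\mT)\le$ (something of the same polynomial order, controlled via Lemma~\ref{lem:overfit_help2}(i) from below and a trivial upper bound $Z_g(\mT)\leq |\mN_g(\mT)|\,n^{(A^2\log n)/8}\leq (n/2)n^{(A^2\log n)/8}$ from above) gives $P(\mT,\wt\mT^\dagger)\geq 1/n$ up to constants. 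Multiplying the two bounds produces the claimed $-\,(A^2/8)\log^2 n/(2^{L+2}n(C_{f_0}+2)^2)$.

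For the PRUNE bound, the sign can go the wrong way, so I bound it from above crudely. Any PRUNE neighbor $\wt\mT\subset\mT$ of an underfitted $\mT$ is still underfitted (removing nodes cannot create $\mT^*\subseteq\wt\mT$), so by Lemma~\ref{lem:v_property}(ii) applied in reverse, $R_1(\mT,\wt\mT)=V_1(\wt\mT)/V_1(\mT)-1\geq 0$; hence these terms are positive and I must genuinely bound them. Using $R_1(\mT,\wt\mT)\leq V_1(\wt\mT)/V_1(\mT)-1\leq \e-1$ from Lemma~\ref{lem:v_property}(i), and $P(\mT,\wt\mT)\leq B(\mT,\wt\mT)$ from \eqref{eq:trans_bound}, I then invoke \eqref{eq:lem_b}: pruning an underfitted tree either keeps it underfitted without touching a signal node — in which case one can still write $B(\mT,\wt\mT) = 1/B(\wt\mT,\mT)$ and note $B(\wt\mT,\mT)$ for the reverse GROW is at most a polynomial, but more to the point, since $\mT$ already misses a signal, the reverse GROW from $\wt\mT$ recovering $\mT$ does not add the missing signal, so this is not directly bounded by \eqref{eq:lem_b}. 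The cleaner route: observe that for an underfitted $\mT$, any prune target $\wt\mT$ satisfies $B(\mT,\wt\mT)\leq n^{-(A^2\log n)/8}$ because removing a node from a tree that is missing a signal only decreases the fit further relative to the ``rescuing'' direction — more carefully, I would argue $\max_{\wt\mT\subset\mT}B(\mT,\wt\mT)\leq n^{-(A^2\log n)/8}$ by the same marginal-likelihood computation as in \eqref{eq:posterior_ratio_overfit2}, since the node being removed, together with the missing signal, forces the Bayes-factor penalty. Then $\sum_{\wt\mT\in\mN_p(\mT)}R_1(\mT,\wt\mT)P(\mT,\wt\mT)\leq |\mN_p(\mT)|(\e-1)n^{-(A^2\log n)/8}\leq (n/2)(\e-1)n^{-(A^2\log n)/8} = \tfrac{\e-1}{2}n^{1-(A^2\log n)/8}$, which can be rewritten as $\tfrac{\e-1}{2n^{(A^2/8\log n-1)}}$ as stated. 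The main obstacle I anticipate is the verification that $b\in[0,1]$ in the GROW step: this requires carefully tracking the prior ratio \eqref{eq:prior_ratio_twig} and the $|\mT_{ext}|-|\wt\mT_{ext}|$ correction and showing that for the rescuer tree these lower-order polynomial terms neither push $b$ below $0$ nor above $1$, which is exactly where the hypothesis $c>3$ (rather than $c>5/2$) is needed, and where I must be slightly careful that the rescuer may add $k>1$ nodes in the Twiggy case.
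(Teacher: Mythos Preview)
Your proposal has two genuine gaps, both arising because you do not exploit the informed (LIT-MH) proposal structure, which is precisely what this lemma is about.

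\textbf{GROW.} Your single-rescuer argument is too weak by a factor of $n$. You correctly obtain $-R_1(\mT,\wt\mT^\dagger)\gtrsim (A^2/8)\log^2 n/(2^{L}n(C_{f_0}+2)^2)$ via Lemma~\ref{lem:b_connect}, but your transition-probability bound $P(\mT,\wt\mT^\dagger)\geq 1/n$ from the trivial estimate $Z_g(\mT)\leq (n/2)\,n^{(A^2\log n)/8}$ is not sharp enough: multiplying gives a product of order $2^{-L}n^{-2}\log^2 n$, not the claimed $2^{-L}n^{-1}\log^2 n$. The paper instead introduces a set $\mathcal D=\{\wt\mT:B(\mT,\wt\mT)\geq n^{(A^2\log n)/8-V}\}$ of good GROW moves and argues \emph{collectively}: writing $W=\sum_{\wt\mT\in\mathcal D'}w_g(\wt\mT\mid\mT)$, one has $Z_g(\mT)\leq W+2n^{(A^2\log n)/8}$, and then
\[
-\sum_{\wt\mT\in\mathcal D}R_1(\mT,\wt\mT)P(\mT,\wt\mT)\;\gtrsim\;\frac{a\log n}{2^{L+2}n(C_{f_0}+2)^2}\cdot\frac{n^{(A^2\log n)/8}+(1-V/a)W}{n^{(A^2\log n)/8}+W/2},
\]
so for $a\geq 2V$ the last fraction is $\geq 1$ regardless of $W$. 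The point is that the informed proposal guarantees the \emph{total} transition probability to good moves is bounded below by a constant, not by $1/n$; a single rescuer cannot see this because if $W$ is large its individual probability is small. (Also, your side claim that $B\geq 1$ yields $P=S(\mT\to\wt\mT^\dagger)$ via \eqref{eq:trans_bound} is not immediate: one needs the LIT-MH lower bound $S(\wt\mT\to\mT)\geq 1/(2Z_p(\wt\mT))$ and then $Z_g(\mT)\geq Z_p(\wt\mT)$ from Lemma~\ref{lem:overfit_help2}.)

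\textbf{PRUNE.} The assertion $B(\mT,\wt\mT)\leq n^{-(A^2\log n)/8}$ for every prune of an underfitted $\mT$ is false. If $\mT$ is underfitted but carries a redundant noise node (case 2(ii) of the canonical path), pruning that node gives $B(\mT,\wt\mT)\geq n^{c-3/2}$ by \eqref{eq:lem_a}, which is large. The paper does \emph{not} bound $B$; it bounds $P$ via the reverse informed proposal:
\[
P(\mT,\wt\mT)\leq B(\mT,\wt\mT)S(\wt\mT\to\mT)\leq B(\mT,\wt\mT)\frac{w_g(\mT\mid\wt\mT)}{Z_g(\wt\mT)}\leq \frac{1}{Z_g(\wt\mT)}\leq n^{-(A^2\log n)/8},
\]
using $B(\mT,\wt\mT)w_g(\mT\mid\wt\mT)\leq B(\mT,\wt\mT)B(\wt\mT,\mT)=1$ and Lemma~\ref{lem:overfit_help2}(i) (since $\wt\mT\subset\mT$ is still underfitted). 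Summing over $|\mN_p(\mT)|\leq n/2$ with $R_1\leq \e-1$ then gives the stated bound.
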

\begin{proof}
\textbf{The GROW movement.} 
By \eqref{eq:lem_b}, there exists some tree $\mG(\mT)=\wt\mT\in \mN_g(\mT)$ containing at least one extra signal node, such that $B(\mT,\mG(\mT))\geq n^{ (A^2\log n )/8 }$. By Lemma \ref{lem:b_connect} with $a= (A^2\log n )/8 $ and with large enough $n$ so that $b$ in \eqref{eq:b} is less than $1$\footnote{When $\wt\mT\supset \mT$, it is apparent $b\geq 0$ because $k=|\wt\mT_{ext}|-|\mT_{ext}|\geq 0 $ and $\log \left( \frac{\Pi(\wt\mT)}{\Pi(\mT)}\right) = k\log n^{-c} (1-n^{-c}) \leq 0 .$}, we find that 
\begin{align}
-R_1(\mT,\mG(\mT))&\geq \frac{1}{2n\,2^{L-1}(C_{f_0}+2)^2}\left(a\log n -\log \left( \frac{\Pi(\mG(\mT))}{\Pi(\mT)}\right) - \frac{|\mT_{ext}|-|\mG(\mT)_{ext}|}{2}\log (1+n) \right)\nonumber\\
&\geq \frac{1}{2^{L}\,n(C_{f_0}+2)^2}\left(a\log n -k\log \left( n^{-c}(1-n^{-c})\right) + \frac{k}{2}\log (1+n) \right)\nonumber\\
&\geq \frac{1}{2^{L}\,n(C_{f_0}+2)^2}\,a\log n ,\label{eq:best_r1}
\end{align}
where $k=|\mG(\mT)_{ext}|-|\mT_{ext}|\geq 1$. Now, for some $V\geq 1$, consider a set of good GROW moves as 
\[\mathcal{D}=\mathcal{D}(\mT)=\{\wt\mT\supset \mT: B(\mT,\wt\mT)\geq n^{ (A^2\log n )/2 -V}\}.\]Again using Lemma \ref{lem:b_connect}, we have for all $\wt\mT\in \mathcal{D}(\mT)$, 
\begin{align}
-R_1(\mT,\wt\mT)&\geq \frac{1}{2n\,2^{L-1}(C_{f_0}+2)^2}\left((a-V)\log n -\log \left( \frac{\Pi(\wt\mT)}{\Pi(\mT)}\right) - \frac{k}{2}\log (1+n) \right)\nonumber\\
&=\frac{1}{2^{L}\,n(C_{f_0}+2)^2}\left((a-V)\log n -k\log \left( n^{-c}(1-n^{-c})\right) + \frac{k}{2}\log (1+n) \right)\nonumber\\
&\geq \frac{1}{2^{L}\,n(C_{f_0}+2)^2}\,(a-V)\log n,\label{eq:good_r1}
\end{align}
where $k=|\wt\mT_{ext}|-|\mT_{ext}|\geq 1$.
Now we bound $P(\mT,\wt\mT)$ for $\wt\mT\in \mathcal{D}$. By the definition of $w_p(\mT|\wt\mT)$, for any $\wt\mT\in \mN_g(\mT),$
\begin{equation}\label{eq:vital_bound}
S(\wt\mT\rightarrow \mT)\geq S_{PRUNE} (\wt\mT\rightarrow \mT)/2 =\frac{w_p(\mT|\wt\mT)}{2Z_p(\wt\mT)}\geq \frac{1}{2Z_p(\wt\mT)}.
\end{equation}
{This lower bound of $S(\wt\mT\rightarrow \mT)$ in \eqref{eq:vital_bound} is why the two sided threshold in \eqref{eq:wp} is crucial in showing the mixing rate. Due to the two sided threshold, $S(\wt\mT\rightarrow \mT)$ is not too small so that the transition kernel $P(\mT,\wt\mT)$ is also not too small as the following:} For any $\wt\mT\in \mN_g(\mT),$
\begin{align}
P(\mT,\wt\mT)&=\min\{S(\mT\rightarrow \wt\mT),B(\mT,\wt\mT)S(\wt\mT\rightarrow \mT)\}\nonumber\\
&\geq\min\{\frac{w_g(\wt\mT|\mT)}{2Z_g(\mT)},\frac{B(\mT,\wt\mT)}{2Z_p(\wt\mT)} \}\nonumber\\
&\geq w_g(\wt\mT|\mT) \min\{\frac{1}{2Z_g(\mT)},\frac{1}{2Z_p(\wt\mT)}\}\nonumber\\
&\geq \frac{ w_g(\wt\mT|\mT) }{2Z_g(\mT)}\label{eq:p_lower}.
\end{align}
In the last inequality, we used Lemma \ref{lem:overfit_help2} (i) and (ii), for a large enough $A$, 
\[Z_g(\mT)\geq  n^{ (A^2\log n )/8}\geq n^{c-1/2}\geq Z_p(\wt\mT).\]
Define $\mathcal{D}'=\mathcal{D}\backslash\{\mG(\mT)\},$ which may be empty. Let $W=\sum_{\wt\mT\in\mathcal{D}'}w_g(\wt\mT|\mT)$. Then, since $|\mN_g(\mT) \backslash \mathcal{D}|\leq n$, and  for $\wt\mT\not\in \mathcal{D}$, $B(\mT,\wt\mT)\geq n^{ (A^2\log n )/8 -V}$, we have
\begin{align}
Z_g(\mT)&=\sum_{\wt\mT\in \mN_g(\mT)}w_g(\wt\mT|\mT)\nonumber\\
&= w_g(\mG(\mT)|\mT)+\sum_{\wt\mT\in\mathcal{D}'}w_g(\wt\mT|\mT)+ \sum_{\wt\mT\in \mN_g(\mT)\backslash\mathcal{D} }w_g(\wt\mT|\mT)\nonumber\\
&= n^{ (A^2\log n )/8 }+W+n^{ (A^2\log n )/8 -V+1}\nonumber\\
&\leq W+2n^{ (A^2\log n )/8 }.\label{eq:Z_bound1}
\end{align}
Now, putting all things together using Lemma \ref{lem:v_property} (ii), \eqref{eq:best_r1}, \eqref{eq:good_r1}, \eqref{eq:p_lower}, and \eqref{eq:Z_bound1}, and recalling $a= (A^2\log n )/8 $, we get
\begin{align}
-&\sum_{\wt\mT\in \mN_g(\mT)}R_1(\mT,\wt\mT)P(\mT,\wt\mT)\geq -\sum_{\wt\mT\in \mathcal{D}(\mT)}R_1(\mT,\wt\mT)P(\mT,\wt\mT)\nonumber\\
&\geq  \frac{\log n}{2^{L}\,n(C_{f_0}+2)^2}\left(\,a \frac{ n^{ (A^2\log n )/8 } }{2Z_g(\mT)}+(a-V)\sum_{\wt\mT\in \mathcal{D}'(\mT)}\frac{ w_g(\wt\mT|\mT) }{2Z_g(\mT)} \right)\nonumber\\
&\geq  \frac{a\log n}{2^{L+2}\,n(C_{f_0}+2)^2}\frac{n^{ (A^2\log n )/8 } +(1-V/a)W}{n^{ (A^2\log n )/8 }+W/2}.\label{eq:collective_p}
\end{align}
Therefore, {as long as $a\geq 2V$}, we have 
\[\sum_{\wt\mT\in \mN_g(\mT)}R_1(\mT,\wt\mT)P(\mT,\wt\mT)\leq -\frac{a\log n}{2^{L+2}\,n(C_{f_0}+2)^2}= -\frac{(A^2/8)\log^2 n}{2^{L+2}\,n(C_{f_0}+2)^2}.\]
\medskip

\noindent\textbf{The PRUNE movement.} 
By applying Lemma \ref{lem:overfit_help2} (i), we have  for any 1-node ($k$-node for Twiggy) subtree $\wt\mT\subset\mT$
\begin{align*}
B(\mT,\wt\mT)S(\wt\mT\rightarrow \mT)&\leq B(\mT,\wt\mT)\frac{w_g(\mT| \wt\mT)}{Z_g(\wt\mT)}\\&\leq \frac{1}{Z_g(\wt\mT)}\leq \frac{1}{n^{ (A^2\log n )/8}}.
\end{align*}
Therefore, by \eqref{eq:trans_bound}, we have 
\[R_1(\mT,\wt\mT)P(\mT,\wt\mT)\leq  R_1(\mT,\wt\mT)B(\mT,\wt\mT)S(\wt\mT\rightarrow \mT)\leq \frac{R_1(\mT,\wt\mT)}{n^{ (A^2\log n )/8}}.\]
By Lemma \ref{lem:v_property} (i), $R_1(\mT,\wt\mT)\leq \e-1$, and the pool size is $|\mN_p(\mT)|\leq n/2.$
Therefore, 
\[\sum_{\wt\mT\in \mN_p(\mT)} R_1(\mT,\wt\mT)P(\mT,\wt\mT)\leq \frac{\e-1}{2n^{(A^2/8\log n-1)}}.\]
\end{proof}

\subsection{Proof of Remark \ref{remark:coupling_app}}\label{pf:coupling_app}
Here, we present the non-informed counterpart of Proposition \ref{prop:v2_approx}. To achieve this, we modify $V_1$ as 
\begin{align}
V_1(\mT)&=\exp\left\{\frac{1}{2^{L}\,C_{f_0}^2n}\left( (\bm X\b ^*)'(I-P_{\mT}/n)\bm X\b ^*\right)\right\},\label{eq:rescuer2}
\end{align}
which is designed to ignore the error terms. This is to guarantee $R_1(\mT,\wt\mT)=0$ for $\wt\mT$ obtained by pruning non-signals from $\mT\in\bT_L$. All the properties of Lemma \ref{lem:v_property} can be shown to apply to the new $V_1$ on the event $\mA_n$ (with probability at least $1-4/n$). For example, for Lemma \ref{lem:v_property} (i), we obtain the bound by 
\[\|\bm X\b^*\|_2^2= n \|\b^*\|_2^2\leq n\, 2^{L}\,C_{f_0}^2.\]
\begin{proposition}\label{prop:v2_approx2} Under the same assumptions of Theorem \ref{thm:coupling}, for the Bayesian CART and Twiggy Bayesian CART algorithms described in Section \ref{section:sampler} and Section \ref{sec:twiggy}, with probability at least $1-4/n$ we have the following properties of the drift functions. 
\begin{itemize}
\item[(i)] For any underfitted tree $\mT\in\bT_L$,
\[\frac{(PV_1)(\mT)}{V_1(\mT)}\leq 1-\frac{\delta_1 A^2\log^2 n}{2^{2L+2}C_{f_0}^2 \,n}+\frac{\e-1}{2n^{(A^2/8\log n-1)}}.\]
\item[(ii)] For any overfitted tree $\mT\in\bT_L$ such that $\mT\neq \mT^*$, for $c>3/2$,
\begin{equation}\label{eq:v2_bcart}
\frac{(PV_2)(\mT)}{V_2(\mT)}\leq 1-\frac{1}{2^{2L+2}}+\frac{M}{n^{c-3/2}}+n^{1- (A^2\log n )/8 },
\end{equation}
where $M=\delta_1=1$ for the Bayesian CART and $M=2L,~\delta_1= \frac{2(D-1)}{D^{L}-1}$ for the Twiggy Bayesian CART. 
\end{itemize}
\end{proposition}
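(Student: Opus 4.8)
The plan is to mirror the two-stage drift analysis that produced Proposition~\ref{prop:v2_approx}, but with the informed kernel \eqref{eq:wp}–\eqref{eq:K_def} replaced by the plain GROW/PRUNE kernel \eqref{eq:grow}–\eqref{eq:proposal} (resp.\ the twiggy kernel \eqref{eq:reweight_tw}–\eqref{eq:tw_prune}), and with $V_1$ of \eqref{eq:rescuer} replaced by its noise-free version \eqref{eq:rescuer2}. As in Section~\ref{pf:prop1}, the starting point is the decomposition
\[
\frac{(PV_i)(\mT)}{V_i(\mT)}=1+\sum_{\star=g,p}\sum_{\wt\mT\in\mN_\star(\mT)}R_i(\mT,\wt\mT)\,P(\mT,\wt\mT),\qquad i=1,2,
\]
together with the universal bound $P(\mT,\wt\mT)\le B(\mT,\wt\mT)\wedge S(\mT\to\wt\mT)$ from \eqref{eq:trans_bound} and the prior/posterior ratio facts \eqref{eq:ratio_post_overfit2} and \eqref{eq:posterior_ratio_overfit2} (equivalently \eqref{eq:lem_a}–\eqref{eq:lem_b}), all valid on $\mA_n$ regardless of the proposal. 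First I would re-derive the analogues of Lemma~\ref{lem:v_property} and Lemma~\ref{lem:b_connect} for the new $V_1$: the bound $1\le V_1(\mT)\le\e$ now follows from $\|\bm X\b^*\|_2^2=n\|\b^*\|_2^2\le n\,2^{L}C_{f_0}^2$ alone, so the auxiliary event $\{\|\bm\varepsilon\|_2^2\le 2n\}$ is no longer needed and the probability sharpens to $1-4/n$; and, crucially, since $\mathrm{col}(\bm X_{\mT})\subseteq\mathrm{col}(\bm X_{\wt\mT})$ whenever $\wt\mT\supset\mT$, one has $(\bm X\b^*)'(I-P_{\wt\mT}/n)\bm X\b^*\le(\bm X\b^*)'(I-P_{\mT}/n)\bm X\b^*$, hence $R_1(\mT,\wt\mT)\le 0$ for \emph{every} grow move and $R_1(\mT,\wt\mT)=0$ for any move that only adds or removes signal-less nodes. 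Lemma~\ref{lem:b_connect} then holds verbatim with $(C_{f_0}+2)^2$ replaced by $C_{f_0}^2$.

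For part~(i) (underfitted $\mT$, i.e.\ $\mT\not\supseteq\mT^*$): on the GROW side, since all grow-move ratios $R_1$ are non-positive, it suffices to retain the single ``rescuer'' move $\mG(\mT)$ that attaches a twig reaching a missing signal node. By \eqref{eq:lem_b}, $B(\mT,\mG(\mT))\ge n^{(A^2\log n)/8}$, so the $C_{f_0}^2$-version of Lemma~\ref{lem:b_connect} yields $-R_1(\mT,\mG(\mT))\gtrsim A^2\log^2 n/(2^{L}C_{f_0}^2 n)$ (the prior ratio and the $\log(1+n)$ term only help, because growing $k\ge1$ nodes multiplies $\Pi$ by a factor $\le1$). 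Because $B(\mT,\mG(\mT))$ is enormous while the reverse prune probability $S(\mG(\mT)\to\mT)\ge\tfrac12|\mG(\mT)_{int}|^{-1}\ge 2^{-L-1}$, we get $P(\mT,\mG(\mT))=S(\mT\to\mG(\mT))$, and the forward probability is $\ge 2^{-L-1}$ for plain Bayesian CART by \eqref{eq:grow}–\eqref{eq:proposal} and $\ge \tfrac{D-1}{D^{L}-1}2^{-L}$ for Twiggy Bayesian CART by \eqref{eq:reweight_tw}; this is exactly where $\delta_1=1$, resp.\ $\delta_1=2(D-1)/(D^{L}-1)$, enters. Multiplying gives the claimed $-\delta_1A^2\log^2 n/(2^{2L+2}C_{f_0}^2 n)$ up to constants. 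On the PRUNE side, a prune move removing a non-signal pre-terminal node (or non-signal twig) has $R_1=0$; a prune move removing a signal has $R_1\le\e-1$ by Lemma~\ref{lem:v_property}(i) but $P(\mT,\wt\mT)\le B(\mT,\wt\mT)\le n^{-(A^2\log n)/8}$ by \eqref{eq:lem_b}, and there are at most $n/2$ such candidates, so $\sum_{\wt\mT\in\mN_p(\mT)}R_1(\mT,\wt\mT)P(\mT,\wt\mT)\le (\e-1)n^{1-(A^2\log n)/8}/2$.

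For part~(ii) (overfitted $\mT\ne\mT^*$): this is essentially Lemma~\ref{lem:key_R2} with the informed weights deleted, and it is in fact cleaner because now $P(\mT,\wt\mT)=S(\mT\to\wt\mT)$ is just the uniform prune probability, with no normalizing constant $Z_p$ (hence no $(1+n^{5/2-c})^{-1}$ factor). Since $\mT\ne\mT^*$ there is a prune move $\wt\mT$ that peels off a redundant twig of length $k\ge1$; for it $R_2(\mT,\wt\mT)\le-k/2^{L+1}$ (Lemma~\ref{lem:v_property}(iii)) and $B(\mT,\wt\mT)\ge n^{k(c-3/2)}\ge1$, so the reverse grow dominates and $P(\mT,\wt\mT)=S(\mT\to\wt\mT)\ge\tfrac12|\mP(\mT)|^{-1}\ge 2^{-L}$ for Bayesian CART (resp.\ $\ge\tfrac12|\mT_{int}|^{-1}\ge 2^{-L}$ for Twiggy by \eqref{eq:tw_prune}), whence $-\sum_{\text{such }\wt\mT}R_2P\ge 2^{-(2L+1)}$, i.e.\ $2^{-(2L+2)}$ after the usual laziness halving carried out as in Section~\ref{sec:application_two_drift}. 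Prune moves removing a $\mT^*$-node have $R_2\le\e-1$ but $P\le B\le n^{-(A^2\log n)/8}$ (at most $n/2$ of them), contributing $\le n^{1-(A^2\log n)/8}$; on the GROW side no signal can be added, so $P(\mT,\wt\mT)\le B(\mT,\wt\mT)\le n^{-k(c-3/2)}$ and $R_2\le 2k/2^{L}$, and summing over $|\mN_g(\mT)|\le 2^{L-1}$ with $k=1$ gives the $n^{-(c-3/2)}$ term ($M=1$), while for Twiggy $|\mN_g(\mT)|\le 2^{L}$ and $k\le L$ give $M=2L$. Collecting the three contributions yields \eqref{eq:v2_bcart}; feeding Proposition~\ref{prop:v2_approx2} into Corollary~\ref{cor:drift2} (after laziness) then produces the mixing bound quoted in Remark~\ref{remark:coupling_app}.

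The main obstacle I anticipate is bookkeeping rather than a new idea: carrying the layer-dependent twig weights $d_{l^*}$ of \eqref{eq:reweight_tw} through the lower bound on $P(\mT,\mG(\mT))$ so that $\delta_1=2(D-1)/(D^{L}-1)$ emerges with the right constant, and checking that the parameter $b$ in the $C_{f_0}^2$-version of Lemma~\ref{lem:b_connect} indeed lies in $[0,1]$ for $n$ large (which is where $c\ge4$, $D\le\e$, and $L\le L_{max}$ are used), so that the second inequality in \eqref{two_ineq} applies and $-R_1\ge b/2$.
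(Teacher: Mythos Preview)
Your plan matches the paper's almost step for step, and the treatment of part~(ii), the prune contributions, and the grow-side $M$ bookkeeping are all correct. There is, however, one genuine gap in part~(i): you assert that ``Lemma~\ref{lem:b_connect} then holds verbatim with $(C_{f_0}+2)^2$ replaced by $C_{f_0}^2$.'' It does not. The proof of Lemma~\ref{lem:b_connect} rests on the exact identity
\[
B(\mT,\wt\mT)=\frac{\Pi(\wt\mT)}{\Pi(\mT)}(1+n)^{(|\mT_{ext}|-|\wt\mT_{ext}|)/2}\left(\frac{V_1(\wt\mT)}{V_1(\mT)}\right)^{-n\,2^{L-1}(C_{f_0}+2)^2},
\]
which holds only because the \emph{original} $V_1$ in \eqref{eq:rescuer} is built from $Y'(I-P_\mT/n)Y$, the very quantity appearing in the marginal likelihood \eqref{eq:compare_post}. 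Once you replace $Y$ by $\bm X\b^*$ as in \eqref{eq:rescuer2}, the link between the posterior ratio $B$ and the $V_1$-ratio is severed, and no amount of constant-matching recovers the lemma.

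The paper avoids this by abandoning Lemma~\ref{lem:b_connect} entirely for the non-informed case and computing $-R_1(\mT,\mG(\mT))$ directly from \eqref{eq:rescuer2}: writing $P_{\mG(\mT)}=P_\mT+P_{\mG(\mT)\backslash\mT}$ and using \eqref{two_ineq},
\[
-R_1(\mT,\mG(\mT))\ge\frac{1}{2\cdot 2^{L}C_{f_0}^2 n}\,(\bm X\b^*)'(P_{\mG(\mT)\backslash\mT}/n)\bm X\b^*\ge\frac{1}{2^{L+1}C_{f_0}^2 n}\cdot n\cdot\frac{A^2\log^2 n}{n}=\frac{A^2\log^2 n}{2^{L+1}C_{f_0}^2 n},
\]
the last step using that $\mG(\mT)\backslash\mT$ contains a signal node with $|\beta^*_{lk}|>A\log n/\sqrt n$. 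This is actually simpler than your route, needs no check that ``$b\in[0,1]$'', and bypasses the posterior ratio altogether. With this correction in place, everything else you wrote (including the $\delta_1$ lower bound on $P(\mT,\mG(\mT))$ from \eqref{eq:reweight_tw} and the $R_1=0$ observation for signal-less prunes) goes through exactly as in the paper.
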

{ To ensure that the upper bound in \eqref{eq:v2_bcart} is less than 1, we impose a stronger condition on $c$, requiring $c\geq 4$. This is because if $c=7/2$, we may have $1/2^{2L+1}  \asymp \frac{n^{c-3/2}}{2},$ for example when $L=\Lmax.$ Now, with $\lambda_1 =1-\frac{\delta_1 A^2\log^2 n}{2^{2L+4}C_{f_0}^2 \,n} $ and $\lambda_2 = 1-\frac{1}{2^{2L+4}}$, }it is straightforward to extend Section \ref{sec:application_two_drift} (the application of the two-drift condition) to this case, obtaining the bound in Remark \ref{remark:coupling_app}. Proposition \ref{prop:v2_approx2} is derived from the non-informed counterpart of Lemma \ref{lem:key_R2} and Lemma \ref{lem:underfit_coupling} presented below. 
\begin{lemma} Under the same assumptions of Theorem \ref{thm:coupling}, for the Bayesian CART and Twiggy Bayesian CART algorithms described in Section \ref{section:sampler} and Section \ref{sec:twiggy}, for any overfitted tree $\mT\in \bT_L$, such that $\mT\neq\mT^*$,
\begin{align}
\sum_{\wt\mT\in \mN_p(\mT)}R_2(\mT,\wt\mT)P(\mT,\wt\mT)&\leq -\frac{1}{2^{2L+2}}+n^{1- (A^2\log n )/8 },\label{eq:prune_4}\\
\sum_{\wt\mT\in \mN_g(\mT)}R_2(\mT,\wt\mT)P(\mT,\wt\mT)&\leq \frac{M}{n^{c-3/2}},\label{eq:grow_4}
\end{align}where $M=1$ for the Bayesian CART, and $M=2L$ for the Twiggy CART. 
\end{lemma}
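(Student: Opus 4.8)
The plan is to mirror the proof of Lemma~\ref{lem:key_R2}: the overfitted drift function $V_2$ behaves exactly as there, and the only change is that the informed proposal bounds are replaced by the elementary transition probabilities of the original (Twiggy) GROW/PRUNE kernels. This replacement costs a factor $\sim 2^{L}$ in the prune lower bound, which is precisely why the drift deficit here is $2^{-(2L+2)}$ rather than the informed $\sim 2^{-(L+2)}$. Throughout I condition on the event $\mA_n$ of \eqref{eq:event_set}, use the decomposition $(PV_2)(\mT)/V_2(\mT)=1+\sum_{\wt\mT\ne\mT}R_2(\mT,\wt\mT)P(\mT,\wt\mT)$, and split the neighbourhood of the overfitted tree $\mT\supsetneq\mT^*$ into GROW candidates $\mN_g(\mT)$ and PRUNE candidates $\mN_p(\mT)$.

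For \eqref{eq:grow_4} (the GROW bound): since $\mT\supseteq\mT^*$, any GROW candidate $\wt\mT$ adds $k=|\wt\mT_{int}\setminus\mT_{int}|$ purely redundant nodes, so Lemma~\ref{lem:v_property}(iii) gives $R_2(\mT,\wt\mT)\le 2k/2^{L}$, while \eqref{eq:lem_a} and \eqref{eq:trans_bound} give $P(\mT,\wt\mT)\le B(\mT,\wt\mT)\le n^{-k(c-3/2)}\le n^{-(c-3/2)}$. For the original Bayesian CART, $k=1$ and $|\mN_g(\mT)|=|\mT_{ext}|\le 2^{L-1}$, hence $\sum_{\wt\mT\in\mN_g(\mT)}R_2(\mT,\wt\mT)P(\mT,\wt\mT)\le n^{-(c-3/2)}$, i.e.\ $M=1$; for Twiggy, $k\le L$ and $|\mN_g(\mT)|\le 2^{L}$, which gives the same bound with $M=2L$.

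For \eqref{eq:prune_4} (the PRUNE bound): since $\mT\ne\mT^*$ and $\mT\supseteq\mT^*$, the tree $\mT$ contains at least one redundant pre-terminal node (resp.\ redundant twig for Twiggy); let $\wt\mT^\circ\in\mN_p(\mT)$ be the corresponding candidate, removing $k\ge 1$ signal-less nodes and keeping $\wt\mT^\circ\supseteq\mT^*$. By \eqref{eq:lem_a}, $B(\mT,\wt\mT^\circ)\ge n^{k(c-3/2)}$; together with the elementary lower bound on the reverse (twig-)GROW probability $S(\wt\mT^\circ\to\mT)$ from \eqref{eq:reweight_tw} (resp.\ \eqref{eq:grow}) and the facts $L\le\Lmax$, $D\le\e$, this forces $B(\mT,\wt\mT^\circ)S(\wt\mT^\circ\to\mT)\ge 1\ge S(\mT\to\wt\mT^\circ)$ for $n$ large, so $P(\mT,\wt\mT^\circ)=S(\mT\to\wt\mT^\circ)\ge 2^{-(L+1)}$ by \eqref{eq:prune}--\eqref{eq:tw_prune} with $|\mP(\mT)|\le 2^{L-1}$ and $|\mT_{int}|\le 2^{L}$. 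Since $R_2(\mT,\wt\mT^\circ)\le -k/2^{L+1}\le -2^{-(L+1)}$ by Lemma~\ref{lem:v_property}(iii), we get $-R_2(\mT,\wt\mT^\circ)P(\mT,\wt\mT^\circ)\ge 2^{-(2L+2)}$. Every other PRUNE candidate $\wt\mT$ with $\wt\mT\supseteq\mT^*$ likewise has $R_2(\mT,\wt\mT)\le 0$ (Lemma~\ref{lem:v_property}(iii)) and contributes nonpositively. A PRUNE candidate $\wt\mT$ with $\wt\mT\not\supseteq\mT^*$ must, by minimality of $\mT^*$, have removed a signal node, so the reverse of \eqref{eq:lem_b} gives $B(\mT,\wt\mT)\le n^{-(A^2\log n)/8}$, hence $P(\mT,\wt\mT)\le n^{-(A^2\log n)/8}$ by \eqref{eq:trans_bound}, while $R_2(\mT,\wt\mT)\le\e-1$ by Lemma~\ref{lem:v_property}(i); since $|\mN_p(\mT)|\le n/2$, the total such contribution is at most $n^{1-(A^2\log n)/8}$ for $n$ large. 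Summing these three contributions gives \eqref{eq:prune_4}, and inserting \eqref{eq:grow_4}--\eqref{eq:prune_4} into the decomposition of $(PV_2)(\mT)/V_2(\mT)$ yields \eqref{eq:v2_bcart}.

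I do not expect a genuine obstacle: the argument is structurally identical to Lemma~\ref{lem:key_R2}, and the only mildly delicate step — verifying $P(\mT,\wt\mT^\circ)=S(\mT\to\wt\mT^\circ)$ — is immediate because $B(\mT,\wt\mT^\circ)$ is superpolynomially large in $n$ while every proposal probability is at worst polynomially small. The strengthened hypotheses $c\ge 4$, $D\le\e$ of Remark~\ref{remark:coupling_app} are exactly what guarantee both this domination and that $2^{L}(D^{L}-1)/(D-1)=o(n^{c-3/2})$, which is where those conditions enter; the rest is bookkeeping of neighbourhood sizes.
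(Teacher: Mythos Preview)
Your proposal is correct and follows essentially the same approach as the paper's own proof: both argue the GROW bound by combining $R_2\le 2k/2^L$ with $P\le B\le n^{-(c-3/2)}$, and both argue the PRUNE bound by exhibiting one signal-free prune candidate $\wt\mT^\circ\in b_\mT$ with $P(\mT,\wt\mT^\circ)=S(\mT\to\wt\mT^\circ)\ge 2^{-(L+1)}$ (verified via $B\cdot S_{GROW}\ge 1$ using $D\le\e$, $c\ge 4$), while bounding the contribution of signal-removing prunes by $n^{1-(A^2\log n)/8}$ via \eqref{eq:trans_bound} and \eqref{eq:lem_b}. The paper simply states that the GROW part and the $a_\mT$-part of PRUNE are inherited verbatim from Lemma~\ref{lem:key_R2} since those steps do not use the informed proposal, and then redoes only the $b_\mT$ lower bound; your write-up spells out the same computations directly.
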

\proof 
\textbf{The PRUNE movement.} First, we consider the case of the Bayesian CART. The proof is the same as in Lemma \ref{lem:key_R2}, except for the bound on $-\sum_{\wt\mT\in b_{\mT}}R_2(\mT,\wt\mT)P(\mT,\wt\mT)$. Since $\mT$ is an overfitted tree and $\mT\neq\mT^*$, we have $|b_\mT|\geq 1$. We take any $\wt\mT\in b_\mT$. By \eqref{eq:lem_a}, we have $B(\mT,\wt\mT)\geq n^{(c-3/2)}$ and  $n^{(c-3/2)}S(\wt\mT\rightarrow \mT)/S(\mT\rightarrow \wt\mT)\geq 1$. This results in the acceptance rate of $1$, implying that for such $\wt\mT$, $P(\mT,\wt\mT)=S(\mT\rightarrow\wt\mT)$, and thus by applying Lemma \ref{lem:v_property} (iii), and then Lemma \ref{lem:overfit_help} (iii), we find that 
\begin{equation}\label{eq:contrast2}
    -R_2(\mT,\wt\mT)P(\mT,\wt\mT)\geq \frac{(|\mT|-|\wt\mT|)}{2^{L+1}}S(\mT\rightarrow \wt\mT)\geq  \frac{(|\mT|-|\wt\mT|)}{2^{L+1}\times 2^{L+1}}\geq \frac{1}{2^{2L+2}}.
\end{equation} The other parts of the proof in Lemma \ref{lem:key_R2} do not depend on the choice of the proposal probability $S(\cdot\rightarrow\cdot)$. Therefore, we have the result.

Now, for the Twiggy Bayesian CART, the only difference is in the lower bound of $P(\mT,\wt\mT)$. By \eqref{eq:reweight_tw}, \eqref{eq:tw_prune}, \eqref{eq:trans_bound}, and $D\leq \e,$
\begin{align*}
P(\mT,\wt\mT)&=\min\{S(\mT\rightarrow \wt\mT),B(\mT,\wt\mT)S( \wt\mT\rightarrow \mT)\}\\
&\geq \min\left\{\frac{1}{2^{L+1}} ,\frac{D-1}{2^L(D^L-1)}n^{c-3/2}\right\} =\frac{1}{2^{L+1}}.
\end{align*} 
Therefore, by proceeding as above, we obtain the bound.

\medskip

\textbf{The GROW movement.} The bound in \eqref{eq:grow_4} is the same as the informed case in Lemma \ref{lem:key_R2}. The proof of Lemma \ref{lem:key_R2} does not depend on a specific choice of $S(\cdot\rightarrow \cdot)$ but uses only \eqref{eq:application_of_trans_bound}, which is obtained by \eqref{eq:trans_bound}. Since the non-informed version shares all the same movement neighbor and posterior ratios, the proof also results in \eqref{eq:grow_4} in the current lemma.

\begin{lemma} Under the same assumptions of Theorem \ref{thm:coupling}, for the Bayesian CART and Twiggy Bayesian CART algorithms described in Section \ref{section:sampler} and Section \ref{sec:twiggy}, for any underfitted tree $\mT\in \bT_L$ i.e., for any $\mT\not\supset \mT^*$, 
\begin{align*}
\sum_{\wt\mT\in \mN_g(\mT)}R_1(\mT,\wt\mT)P(\mT,\wt\mT)&\leq -\frac{\delta_1 A^2\log^2 n}{2^{2L+2}C_{f_0}^2 \,n},\\
\sum_{\wt\mT\in \mN_p(\mT)}R_1(\mT,\wt\mT)P(\mT,\wt\mT)&\leq \frac{\e-1}{2n^{(A^2/8\log n-1)}},
\end{align*} 
where $\delta_1=1$ for Bayesian CART and $\delta_1= \frac{2(D-1)}{D^{L}-1}$.
\end{lemma}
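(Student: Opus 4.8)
The plan is to reproduce the argument of Lemma~\ref{lem:underfit_coupling} almost verbatim, replacing the informed weights $w_g,w_p$ (and their normalisers $Z_g,Z_p$) by the explicit non-informed proposal kernels --- the uniform GROW/PRUNE probabilities \eqref{eq:grow}--\eqref{eq:prune} for Bayesian CART, and the depth-reweighted ones \eqref{eq:reweight_tw}--\eqref{eq:tw_prune} for Twiggy Bayesian CART --- and using the noise-free drift function $V_1$ of \eqref{eq:rescuer2} in place of \eqref{eq:rescuer}. Three inputs carry over unchanged: the decomposition $(PV_1)(\mT)/V_1(\mT)=1+\sum_{\star=g,p}\sum_{\wt\mT\in\mN_\star(\mT)}R_1(\mT,\wt\mT)P(\mT,\wt\mT)$; the transition bound \eqref{eq:trans_bound}; and the posterior-ratio estimates \eqref{eq:lem_a}--\eqref{eq:lem_b}, which involve only $\Pi(\cdot\C Y)$ and are therefore insensitive to the choice of $V_1$. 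I also use, as recorded in Section~\ref{pf:coupling_app}, that on $\mA_n$ the new $V_1$ still satisfies Lemma~\ref{lem:v_property}(i)--(ii): $1\le V_1\le\e$ and $R_1(\mT,\wt\mT)\le0$ whenever $\wt\mT\supset\mT$.

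For the GROW direction, the key fact is that with $V_1$ as in \eqref{eq:rescuer2} the ratio $V_1(\wt\mT)/V_1(\mT)$ is governed purely by captured signal energy: by orthogonality of the regular Haar design, for any $\wt\mT\supset\mT$, $(\bm X\b^*)'(P_{\wt\mT}/n-P_{\mT}/n)\bm X\b^*=n\sum_{(l,k)\in\mathcal B\cap(\wt\mT_{int}\setminus\mT_{int})}|\beta^*_{lk}|^2\ge0$, so $R_1(\mT,\wt\mT)\le0$ for every GROW neighbour and the whole GROW sum is bounded above by its $\mG(\mT)$ term alone. Since $\mT$ is underfitted, \eqref{eq:lem_b} (which holds under Assumption~\ref{ass:f_classic}(a) for Bayesian CART, and under (a) or (b) for Twiggy, as assumed in Theorem~\ref{thm:coupling}) supplies $\wt\mT=\mG(\mT)\in\mN_g(\mT)$ gaining a signal node $(l_S,k_S)$ with $|\beta^*_{l_S k_S}|>A\log n/\sqrt n$; the displayed identity then gives $(\bm X\b^*)'(P_{\mG(\mT)}/n-P_{\mT}/n)\bm X\b^*\ge A^2\log^2 n$, whence $V_1(\mG(\mT))/V_1(\mT)\le\exp\{-A^2\log^2 n/(2^{L}C_{f_0}^2 n)\}$ and, using $1-\e^{-x}\ge x/2$ on $[0,1]$ (valid for large $n$ since $\log^2 n/n\to0$), $-R_1(\mT,\mG(\mT))\ge A^2\log^2 n/(2^{L+1}C_{f_0}^2 n)$. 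Because $B(\mT,\mG(\mT))\ge n^{(A^2\log n)/8}$ dominates the (at most polynomial) proposal ratio, the Metropolis acceptance of this move equals $1$ for $A$ large, so $P(\mT,\mG(\mT))=S(\mT\to\mG(\mT))$; inserting $S(\mT\to\mG(\mT))\ge\tfrac12|\mT_{ext}|^{-1}\ge 2^{-(L+1)}$ for Bayesian CART and $S(\mT\to\mG(\mT))\ge\tfrac12\frac{D-1}{D^L-1}2^{-(L-1)}=\frac{D-1}{D^L-1}2^{-L}$ for Twiggy (the left inequality in \eqref{eq:reweight_tw}) produces exactly $-R_1(\mT,\mG(\mT))P(\mT,\mG(\mT))\ge\delta_1 A^2\log^2 n/(2^{2L+2}C_{f_0}^2 n)$ with $\delta_1=1$, respectively $\delta_1=\tfrac{2(D-1)}{D^L-1}$, giving the first claimed bound.

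For the PRUNE direction the noise-free $V_1$ kills almost every term: if $\wt\mT\subset\mT$ deletes only non-signal nodes --- which, under Assumption~\ref{ass:f_classic}, have $\beta^*=0$ --- then the same identity gives $V_1(\wt\mT)=V_1(\mT)$, hence $R_1(\mT,\wt\mT)=0$, so such moves contribute nothing no matter how large $P(\mT,\wt\mT)$ is (this is precisely what \eqref{eq:rescuer2} was designed for). The only surviving terms are prunes that discard a signal node (for Twiggy, a twig containing one); for these the likelihood loss forces $B(\mT,\wt\mT)\le n^{-(A^2\log n)/8}$ by the energy estimate behind \eqref{eq:lem_b}, while $R_1(\mT,\wt\mT)\le V_1(\wt\mT)/V_1(\mT)-1\le\e-1$ by Lemma~\ref{lem:v_property}(i); combining with \eqref{eq:trans_bound}, $P(\mT,\wt\mT)\le B(\mT,\wt\mT)S(\wt\mT\to\mT)\le n^{-(A^2\log n)/8}$, and summing over the at most $|\mN_p(\mT)|\le n/2$ such neighbours yields $\sum_{\wt\mT\in\mN_p(\mT)}R_1(\mT,\wt\mT)P(\mT,\wt\mT)\le(\e-1)/(2n^{A^2\log n/8-1})$. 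The argument is identical for the two kernels, since only the lower bounds on $S_{GROW}(\mT\to\cdot)$ and the crude estimates $S\le1$, $|\mN_\star(\mT)|\le n/2$ are used.

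The step I expect to require the most care is the GROW bookkeeping: one must check that the one-step proposal probability $S_{GROW}(\mT\to\mG(\mT))$ really carries the factor $\frac{D-1}{D^L-1}$ for Twiggy --- this is the worst case of the layer-reweighting distribution $d_{l^*}$ in \eqref{eq:reweight_tw}, attained when $\mG(\mT)$ has to reach the deepest eligible layer --- and that the resulting power of two lands exactly on $2^{2L+2}$ and not $2^{2L+1}$ or $2^{2L+3}$, because the value of $\lambda_1$ used in Section~\ref{sec:application_two_drift} (and hence the constant in Remark~\ref{remark:coupling_app}) is read off from it. A secondary, mostly notational point is the implicit restriction that for Bayesian CART the GROW neighbour supplied by \eqref{eq:lem_b} exists only when a missing signal node can be reached by a single split, i.e. under Assumption~\ref{ass:f_classic}(a); under (b) this can fail, in agreement with the hypothesis of Theorem~\ref{thm:coupling} and with the exponential lower bound of Theorem~\ref{theo:poor}.
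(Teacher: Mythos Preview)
Your proposal is correct and follows essentially the same approach as the paper: isolate the single ``best'' GROW move $\mG(\mT)$ gaining a signal node, compute $-R_1(\mT,\mG(\mT))\ge A^2\log^2 n/(2^{L+1}C_{f_0}^2 n)$ via the noise-free $V_1$ in \eqref{eq:rescuer2}, lower-bound $P(\mT,\mG(\mT))$ by the explicit proposal bound ($2^{-(L+1)}$ or $\tfrac{D-1}{2^L(D^L-1)}$), and discard all other GROW terms by $R_1\le0$; for PRUNE, use $R_1=0$ on non-signal deletions and $P\le B\le n^{-(A^2\log n)/8}$ on signal deletions. The only cosmetic difference is that you argue acceptance equals one explicitly, whereas the paper writes $P(\mT,\mG(\mT))=\min\{S(\mT\to\mG(\mT)),B(\mT,\mG(\mT))S(\mG(\mT)\to\mT)\}$ and observes the first term is smaller; both yield $P(\mT,\mG(\mT))\ge\delta_1/2^{L+1}$.
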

\proof 
\textbf{The GROW movement.} Consider first the case of the Bayesian CART. As in Lemma \ref{lem:underfit_coupling}, by \eqref{eq:lem_b} there exists a tree $\mG(\mT)\supset\mT$ containing at least one extra signal node, such that $B(\mT,\mG(\mT))\geq n^{ (A^2\log n )/2 }$. This large posterior rate implies $P(\mT,\mG(\mT)) = S_{GROW}(\mT\rightarrow\mG(\mT))/2\geq 1/2^{L+1}$. By inequality \eqref{two_ineq}, and with a decomposition $P_{\mG(\mT)}=P_{\mT}+P_{\mG(\mT)\backslash \mT}$,
\begin{align*}
    -R_1(\mT,\mG(\mT))&\geq \frac{1}{2}\frac{1}{C_{f_0}^2n\,2^{L}}\left((\bm X\b ^*)'(P_{\mG(\mT)}/n-P_{\mT}/n)\bm X\b ^*\right)\\
    &\geq \frac{1}{2^{L+1}}\frac{1}{C_{f_0}^2n}\left((\bm X\b ^*)'(P_{\mG(\mT)\backslash \mT}/n)\bm X\b ^*\right)\\
    &\geq \frac{1}{2^{L+1}\,n\,C_{f_0}^2}n\frac{A^2\log^2n}{n}= \frac{A^2\log^2n}{2^{L+1}\,n\,C_{f_0}^2}.
\end{align*}
Besides $R_1(\mT,\wt\mT)\leq 0$ for all $\wt\mT\in N_G(\mT)$ by Lemma \ref{lem:v_property} (ii). Therefore, by considering this movement to $\mG(\mT)$, we obtain
\begin{align}
-&\sum_{\wt\mT\in \mN_g(\mT)}R_1(\mT,\wt\mT)P(\mT,\wt\mT)\geq -R_1(\mT,\mG(\mT))P(\mT,\mG(\mT))\label{eq:contrast3}\\
&\geq  \frac{A^2\log^2n}{2^{L+1}\,n\,C_{f_0}^2}P(\mT,\mG(\mT))\geq  \frac{A^2\log^2n}{2^{2L+2}\,n\,C_{f_0}^2}.\nonumber
\end{align}
Now, we consider the case of the Twiggy Bayesian CART. The only change in the above calculation is the lower bound for $P(\mT,\mG(\mT))$. By \eqref{eq:reweight_tw}, \eqref{eq:tw_prune}, and \eqref{eq:trans_bound}, 
\begin{align*}
P(\mT,\mG(\mT))&=\min\{S(\mT\rightarrow \mG(\mT)),B(\mT,\mG(\mT))S( \mG(\mT)\rightarrow \mT)\}\\
&\geq \min\left\{\frac{D-1}{2^L(D^L-1)},\frac{n^{(A^2\log n)/2)}}{2^{L+1}}\right\} = \frac{D-1}{2^L(D^L-1)}=\frac{\delta_1}{2^{L+1}}.
\end{align*}
Therefore, by proceeding as above, we obtain the bound.

\medskip

\textbf{The PRUNE movement.} 
Consider first the case of the Bayesian CART. There are two cases of a 1-node subtree $\wt\mT\subset \mT$. First, when $\wt\mT$ is made by pruning a \emph{non-signal} from $\mT$: Due to the modification of the new $V_1$ in \eqref{eq:rescuer2}, we have $R_1(\mT,\wt\mT)=0$.  Second, when $\wt\mT$ is made by pruning a \emph{signal} from $\mT$: We have from \eqref{eq:lem_b}, $B(\mT,\wt\mT)\leq n^{- (A^2\log n )/8}$, and by Lemma \ref{lem:v_property} (i), $R_1(\mT,\wt\mT)\leq \e-1$. From \eqref{eq:trans_bound}, we have $P(\mT,\wt\mT)\leq B(\mT,\wt\mT)$. Therefore, considering the maximum possible value of $R_1(\mT,\wt\mT)P(\mT,\wt\mT)$ and since the pool size is $|\mN_p(\mT)|\leq 2^L\leq n/2$, we have
\[\sum_{\wt\mT\in \mN_p(\mT)} R_1(\mT,\wt\mT)P(\mT,\wt\mT)\leq \frac{\e-1}{2n^{(A^2/8\log n-1)}}.\]
Now, when it comes to the case of the Twiggy Bayesian CART, there are two cases of a $k$-node subtree $\wt\mT\subset \mT$. First, when all nodes of $\mT\backslash\wt\mT$ are non-signals, and second when $\mT\backslash\wt\mT$ contains at least one signal. In these cases, the above reasoning applies in the same way.
\begin{remark}\label{rmk:why_save}{ The only algorithmic difference between the informed versions and their non-informed counterparts is the proposal distribution $S(\cdot\rightarrow \cdot)$, whether proposing uniformly or informatively. This difference brings two major benefits compared to the original non-informed algorithms. First, the proposal probability of $\mG(\mT)$ from $\mT$ in the canonical path, or namely, the best movement, is significantly improved. Note that for any MH-algorithm, $P(\mT,\wt\mT)\leq S(\mT\rightarrow\wt\mT)$. Therefore, no matter how much posterior increase can be brought by the best movement, its transition probability is still upper bounded by $S(\mT\rightarrow\wt\mT)\leq 1/2^L$ in the non-informed algorithms. This contrast is highlighted by comparing the large proposal probability bound in \eqref{eq:contrast1} with the small lower bound (of a uniform proposal) in \eqref{eq:contrast2}. Second, the change to the informed proposal increases the transition probability of a set of movements that reduce the drift function values, or namely, good movements. This plays an important role especially when handling underfitted tree cases (GROW) as in \eqref{eq:collective_p} and the following display, which exploit that the transition probability of good movements is more than $1/4$. Although there is no guarantee that there will be multiple good movements other than the best movement, even when there is only a single best movement, \eqref{eq:collective_p} implies then its transition probability is greater than $1/4$. In the proving technique of two-drift conditions, movements that have a small drift ratio ($R_1$ and $R_2$) are good movements. Here, such many good movements collectively reduce the expectation of the ratio in the next MCMC step. On the contrary, in the above proof of Remark \ref{remark:coupling_app}, we considered only a single best movement when handling underfitted tree cases (GROW) as in \eqref{eq:contrast3}. Note that this consideration was unavoidable. In the non-informed setting, like in the informed setting, guaranteeing multiple good movements (here, in the sense of posterior increase) is difficult other than a single best movement. However, unlike the case of the informed setting, the uniform proposal only guarantees that the transition probability of the best movement (signal obtaining) is $\leq 1/2^{L+1}$. Therefore, the upper bound in Remark \ref{remark:coupling_app} slower than that of the informed algorithms is not because only a single movement was considered in \eqref{eq:contrast3}. Rather, this is due to the difference in the proposal distributions.
}\end{remark}
\section{Comparison to \citep{yang2016computational}}\label{sec:comp_yang}
 
\citep{yang2016computational} showed rapid mixing of MH algorithm of a Bayesian variable selection problem for a standard linear model 
$$ 
Y=X\b^*+\bm\omega, 
$$ where $X\in\R^{n\times p}$ is the design matrix, $\b^*\in\R^p$ is the unknown regression vector and $\bm\omega\sim \mN (0,\sigma_0I_n)$. Here $p$ is the number of covariates and $n$ is the sample size. Denote by $\bm\gamma\in \{0,1\}^p$ the vector of indicators for influential regression weights in $\bm\b^*$. A coefficient $\beta_j^*\in \bm\b^*$ is considered influential if $|\beta_j^*|\geq C_\beta$ for a constant $C_\beta>0$ that depends on $(\sigma_0, n, p)$. The MH algorithm for Bayesian variable selection generates its proposal by randomly swapping two indicators or adding/removing one indicator in $\bm\gamma$. The Bayesian variable selection problem is highly connected to our tree sampling. However, there is no requirement on the selected variables in $\bm\gamma$ to maintain a systematic structure. This is an important contrast from our setting, where the selected nodes should compose a valid tree shape. Therefore, it is an interesting question whether this imposed tree structure would encourage even more rapid mixing in comparison with the standard Bayesian variable selection problem. 

In answering this question, we introduce some notations in \citep{yang2016computational} and compare them with our settings. Denote the design matrix of the selected columns by $X_{\bm\gamma}\in\R^{n\times |\bm\gamma|}$. The Bayesian hierarchical model considered in \citep{yang2016computational} is 
\begin{align}
\bm\omega&\sim \mN (0,\phi^{-1}I_n)\nonumber\\
\pi (\phi)&\propto \frac{1}{\phi},\nonumber\\
\bm\beta|\bm\gamma&\sim \mN(0,g\phi^{-1}(X'_{\bm\gamma}X_{\bm\gamma})^{-1}),\nonumber\\
\Pi(\bm\gamma)&\propto \Big(\frac{1}{p}\Big)^{\kappa|\bm\gamma|}\mathbb{I}[|\bm\gamma|\leq s_0],\label{eq:yang_prior}
\end{align}
where $s_0$ is the upper bound on the maximum number of important covariates, $g>0$ is the degree of dispersion in the regression prior, and $\kappa$ is the model size penalty. A hyperparameter $\alpha\geq 1/2$ is used to constraint the relationship between $g$ and $p$ by $g\asymp  p^{2\alpha}$. Our setting corresponds to when $p= n/2$, $C_\beta = A\log n/\sqrt{n}$, $g=n$, $s_0 = 2^L$, $\sigma_0^2=1$, $\kappa=c$, and $\alpha=1/2$. The consistency condition in \cite{yang2016computational}  ((9a), High SNR condition) is satisfied if $A^2\geq 30(4.5+\kappa)$ given $\Lmax\geq 5$. Due to the orthogonality of our design matrix $X'X=nI_p$ and Assumption \ref{ass:f_classic}, these hyperparameter settings meet their regularity conditions (Assumption A to D in \cite{yang2016computational}) by additionally assuming $C_{f_0}^22^L\leq \log (n/2)$, $c \geq 17+1/2$ and $L\leq \Lmax-\log_2\Lmax-4$ as follows. 

\noindent\textbf{Assumption A)} The condition (7a) is written as $C_{f_0}^22^L\leq \log (n/2)$, which leads to satisfy $\|\frac{1}{\sqrt n}X\b^*\|_2^2\leq \log n$. The other condition (7b) is trivial since non-influential nodes are regarded to have zero coefficients, with $\tilde{L}=0$.

\noindent\textbf{Assumption B)} The lower restricted eigenvalue condition is met for any $\nu\in(0,1]$ since $\frac{1}{n}X'X=I_p$. Due to the orthogonality of $X$ as discussed in \cite{yang2016computational}, the sparse projection condition is always satisfied when $L=4\nu^{-1}$. We set $\nu=1$ and $L=4$ since smaller $L$ is a less restrictive condition.

\noindent\textbf{Assumption C)} It is trivial with the hyperparameter settings of $g=n,p=n/2, \alpha=1/2$ and $c = \kappa\geq 17+1/2$.

\noindent\textbf{Assumption D)} Version $D(s_0)$ should be met for the Theorem 2 in \citep{yang2016computational}, which is 
\[\max\{1,(2\nu^{-2}\omega(X)+1)s^*\}\leq s_0\leq \frac{1}{32}\left\{\frac{n}{\log p} - 8\tilde{L}\right\},\] where $\omega:=\max_{\mT} \|(X_{\mT}'X_{\mT})^{-1} X_{\mT}'X_{\mT^*\backslash\mT}\|_{\rm op}^2$. In our translation, $s_0=2^L$ and $s^* = |\mT^*_{int}|$. The lower bound condition on $s_0$ is trivial because $X$ is orthogonal, i.e., $w(X)=0$. Since $\tilde{L}=0$, the right upper bound is satisfied when $L\leq \Lmax-\log_2\Lmax-4$.

Therefore, the rapid mixing guarantee (Theorem 2 in \citep{yang2016computational}) is translated as follows. 

\begin{theorem}[\citep{yang2016computational} Theorem 2] \label{theo:yang} Assume the model \eqref{eq:main_model} with Assumption \ref{ass:f_classic} and the spike-and-slab prior in \eqref{eq:yang_prior} with $\kappa=c \geq 17+1/2$. Consider the Spike-and-Slab MH algorithm in \cite{yang2016computational} without a tree structure restriction ($\bm\gamma$ is the vectorized $\mT\in \bT_L$). Assume $C_{f_0}^22^L\leq \log (n/2)$ and $1\leq L\leq \Lmax-\log_2\Lmax-4$. With a large enough constant $A>0$, with probability at least $1-c_3p^{-c_4}$, \begin{equation}\label{eq:yang}
    \tau_{\epsilon}\leq 3 \times 2^{2L}n \left[n\log (n/2)+(1+ 4c) 2^L\log (n/2))+\log (2/\epsilon)\right], 
\end{equation}
for some $c_3,$ and $c_4$. 
\end{theorem}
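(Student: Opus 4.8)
The statement is not a new result: it is the rapid-mixing guarantee of \citep{yang2016computational} (their Theorem 2) restated for the specific case in which the design is the orthonormal Haar wavelet matrix $\bm X$ (so $p=n/2$) and the truth is a step function satisfying Assumption \ref{ass:f_classic}. Consequently, the plan is to verify that our hyperparameter choices fall inside the regime covered by their theorem, and then to carry out the bookkeeping of constants that produces display \eqref{eq:yang}. The first step is to fix the dictionary between the two setups: $p=n/2$, $\sigma_0^2=1$, $C_\beta=A\log n/\sqrt{n}$, $g=n$ (hence $\alpha=1/2$ in their relation $g\asymp p^{2\alpha}$), $\kappa=c$, and $s_0=2^L$, with the candidate models $\bm\gamma$ now ranging over \emph{all} subsets of wavelet indices of size at most $s_0$ (no tree constraint) and the ``true model'' being the signal set $\mathcal{B}$, for which $|\mathcal{B}|\leq |\mT^*_{int}|=:s^*$. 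The structural fact that makes every regularity condition cheap is orthonormality of the Haar basis, i.e.\ $\bm X'\bm X=n\,I_p$.

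Next I would check, one at a time, the regularity Assumptions A--D of \citep{yang2016computational} together with their high-SNR (consistency) condition (9a). The high-SNR condition reduces to $A^2\gtrsim\kappa$ and holds for $A$ large enough (explicitly, $A^2\geq 30(4.5+\kappa)$ suffices once $\Lmax\geq 5$), which is also what powers the use of Assumption \ref{ass:f_classic} here. Assumption A is the signal-size bound $\|\bm X\b^*/\sqrt{n}\|_2^2\leq\log n$, which is exactly our hypothesis $C_{f_0}^2\,2^L\leq\log(n/2)$ (the companion bound (7b) is vacuous since non-influential coefficients are exactly zero, so $\tilde L=0$). Assumption B --- the lower restricted eigenvalue and sparse-projection conditions --- is immediate for any $\nu\in(0,1]$ because $\tfrac1n\bm X'\bm X=I_p$; taking $\nu=1$, the sparse-projection requirement ($L=4\nu^{-1}$, and smaller $L$ is less restrictive) is satisfied. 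Assumption C is a routine inequality in $(g,p,\alpha,\kappa)$ and holds once $c=\kappa\geq 17.5$. Finally Assumption D$(s_0)$ requires $\max\{1,(2\nu^{-2}\omega(\bm X)+1)s^*\}\leq s_0\leq\tfrac1{32}\big(n/\log p-8\tilde L\big)$, where the incoherence parameter $\omega(\bm X)=\max_{\mT}\|(\bm X_\mT'\bm X_\mT)^{-1}\bm X_\mT'\bm X_{\mT^*\backslash\mT}\|_{\rm op}^2$ vanishes by orthogonality; hence the lower bound is trivial, and since $\tilde L=0$ the upper bound $2^L\leq\tfrac1{32}\,n/\log(n/2)$ follows from our standing assumption $L\leq\Lmax-\log_2\Lmax-4$.

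With all hypotheses in force, I would invoke \citep{yang2016computational} Theorem 2 verbatim: it returns, with probability at least $1-c_3p^{-c_4}$, a mixing-time bound of order $s_0^2\,n\big[n\log p+\kappa\,s_0\log p+\log(1/\epsilon)\big]$ up to absolute constants. Substituting $p=n/2$, $s_0=2^L$, $\kappa=c$ and absorbing the constant factors yields precisely $\tau_\epsilon\leq 3\cdot 2^{2L}\,n\big[n\log(n/2)+(1+4c)\,2^L\log(n/2)+\log(2/\epsilon)\big]$, which is \eqref{eq:yang}; the probability statement is inherited directly. The only genuinely delicate step is the verification of Assumption D (and the sparse-projection half of Assumption B): one must confirm that every $s_0$-sparse submatrix $\bm X_\mT$ has full column rank and that the incoherence parameter is \emph{exactly} zero --- both of which hinge on orthonormality of $\bm X$ --- and one must track the $\log p$ versus $\log(n/2)$ factor and the constant $32$ carefully so that $L\leq\Lmax-\log_2\Lmax-4$ is exactly the condition needed. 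Everything else is substitution and constant-chasing.
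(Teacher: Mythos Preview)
Your proposal is correct and matches the paper's own treatment essentially line for line: the paper also presents this theorem as a direct translation of \citep{yang2016computational} Theorem~2, sets up the same parameter dictionary ($p=n/2$, $g=n$, $\alpha=1/2$, $\kappa=c$, $s_0=2^L$, $\tilde L=0$), and verifies Assumptions A--D plus the high-SNR condition (9a) using exactly the orthogonality $\bm X'\bm X=nI_p$ to kill the restricted-eigenvalue, sparse-projection, and incoherence ($\omega(\bm X)=0$) requirements. The specific numerical thresholds you quote ($A^2\geq 30(4.5+\kappa)$, $c\geq 17.5$, $L\leq L_{\max}-\log_2 L_{\max}-4$) are the same ones the paper records, so there is nothing to add.
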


Now, for the comparison purpose, we match the settings by applying the sparsity prior in \eqref{eq:yang_prior} to our result instead of the classical Bayesian CART prior in \ref{section:bart_prior_only}. That is, for the comparison, we use the prior $\Pi(\mT)\propto \wt p_{lk}^{(-|\mT_{int}|)}\,\mathbb{I}[|\mT_{int}|\leq 2^L],$ where $\wt p_{lk} = (n/2)^{-c}$. Because $\wt p_{lk}=2^cp_{lk},$ it is easy to verify the consistency and the mixing rate results in Theorem \ref{lemma:consist} and Theorem \ref{eq:theo1} for $c>7/2$ as long as $\wt p_{lk} <1/2.$ Therefore, we can compare our upper bound in \eqref{eq:tau_bound1} against the bound in \eqref{eq:yang}.

\section{Additional Visualizations}\label{sec:add_vis}
\clearpage
\begin{figure}
    \centering
    \includegraphics[width=\linewidth]{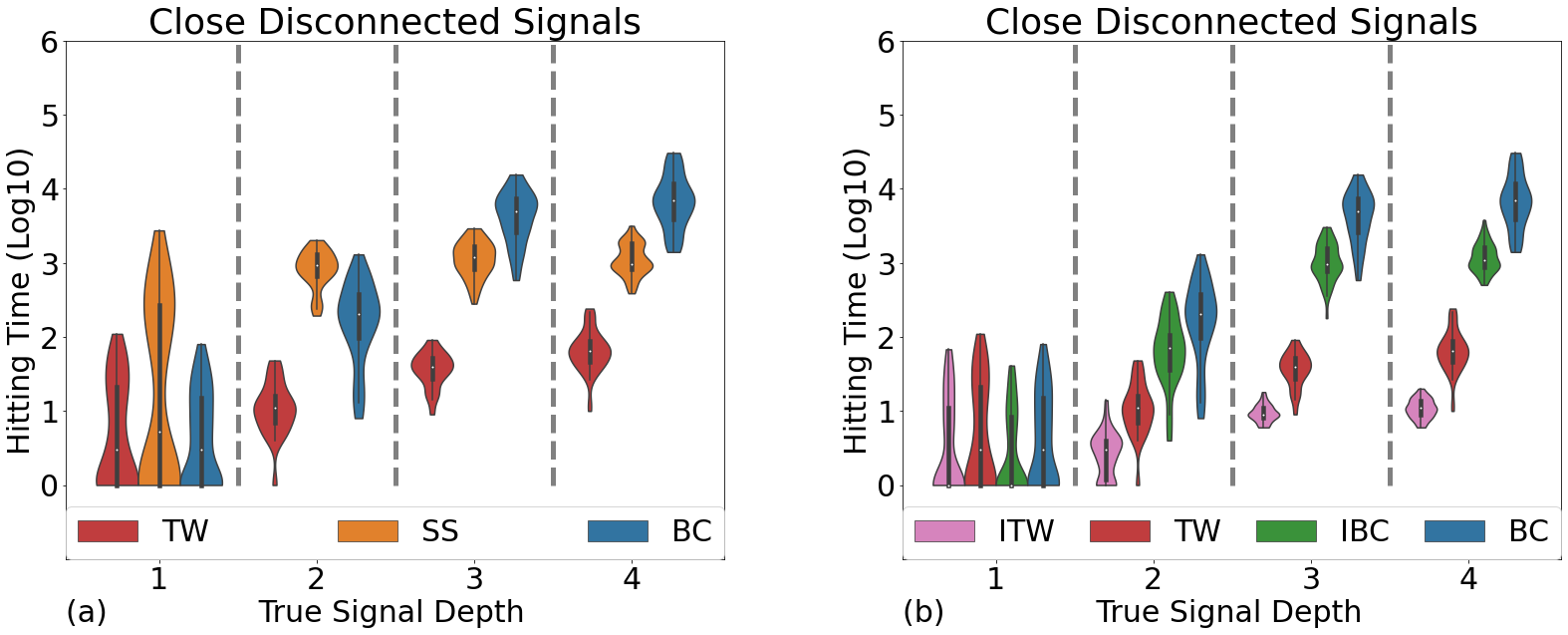}
    \caption{Hitting time $\tau=\min_{t\geq 0}\{ \mathcal B\subset \mT_{int}^t\}$ when true tree gets deeper of Case (3) (by gradually making the deeper part of the tree). {(Legend) BC and IBC: original and informed Bayesian CART, TW and ITW: original and informed Twiggy Bayesian CART, ss: Spike-and-Slab with prior $p_1^{ss}$.} (a) Twiggy Bayesian CART $<$ Bayesian CART. Spike-and-Slab performance is consistent across the true tree depth. (b) Informed (Twiggy) Bayesian CART hits the true signals faster than (Twiggy) Bayesian CART. However, informed Bayesian CART does not hit faster than Twiggy Bayesian CART.}
    \label{fig:informed4}
\end{figure}
\begin{figure}
    \centering
    \includegraphics[width=\linewidth]{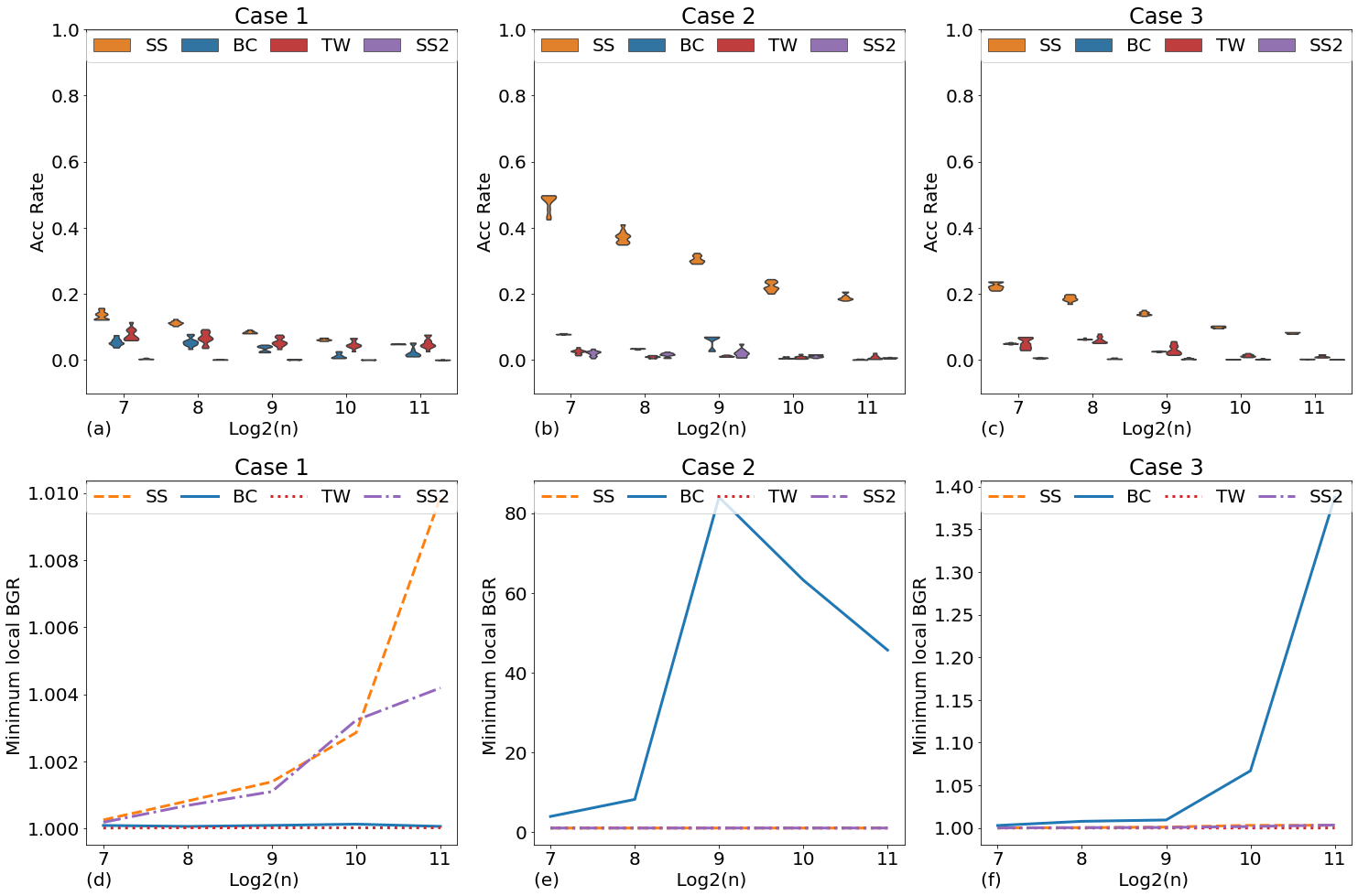}
    \caption{The acceptance rates and minimum local BGRs. {(Legend) BC: Bayesian CART, TW: Twiggy Bayesian CART, SS and SS2: Spike-and-Slab with prior $p_1^{ss}$ and $p_2^{ss}$ respectively.}}
    \label{fig:hit1}
\end{figure}

\begin{figure}
    \centering
    \includegraphics[width=\linewidth]{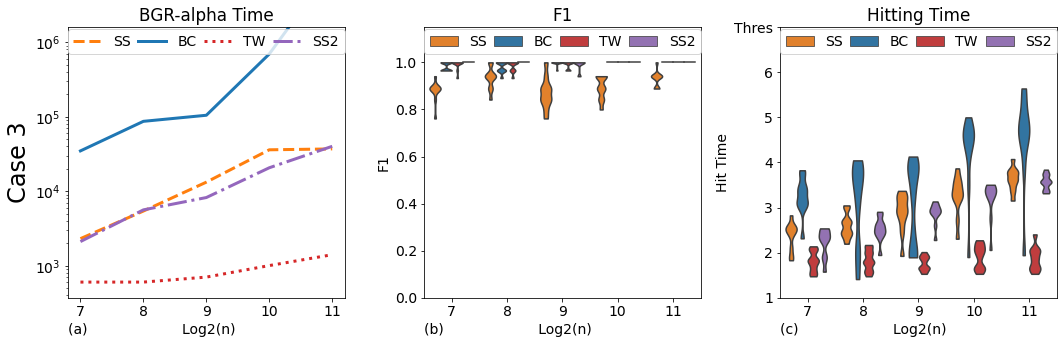}
    \caption{The MCMC performance measures for Case (3). {(Legend) BC: Bayesian CART, TW: Twiggy Bayesian CART, SS and SS2: Spike-and-Slab with prior $p_1^{ss}$ and $p_2^{ss}$ respectively.}}
    \label{fig:hit2}
\end{figure}

\begin{figure}
    \centering
    \includegraphics[width=\linewidth]{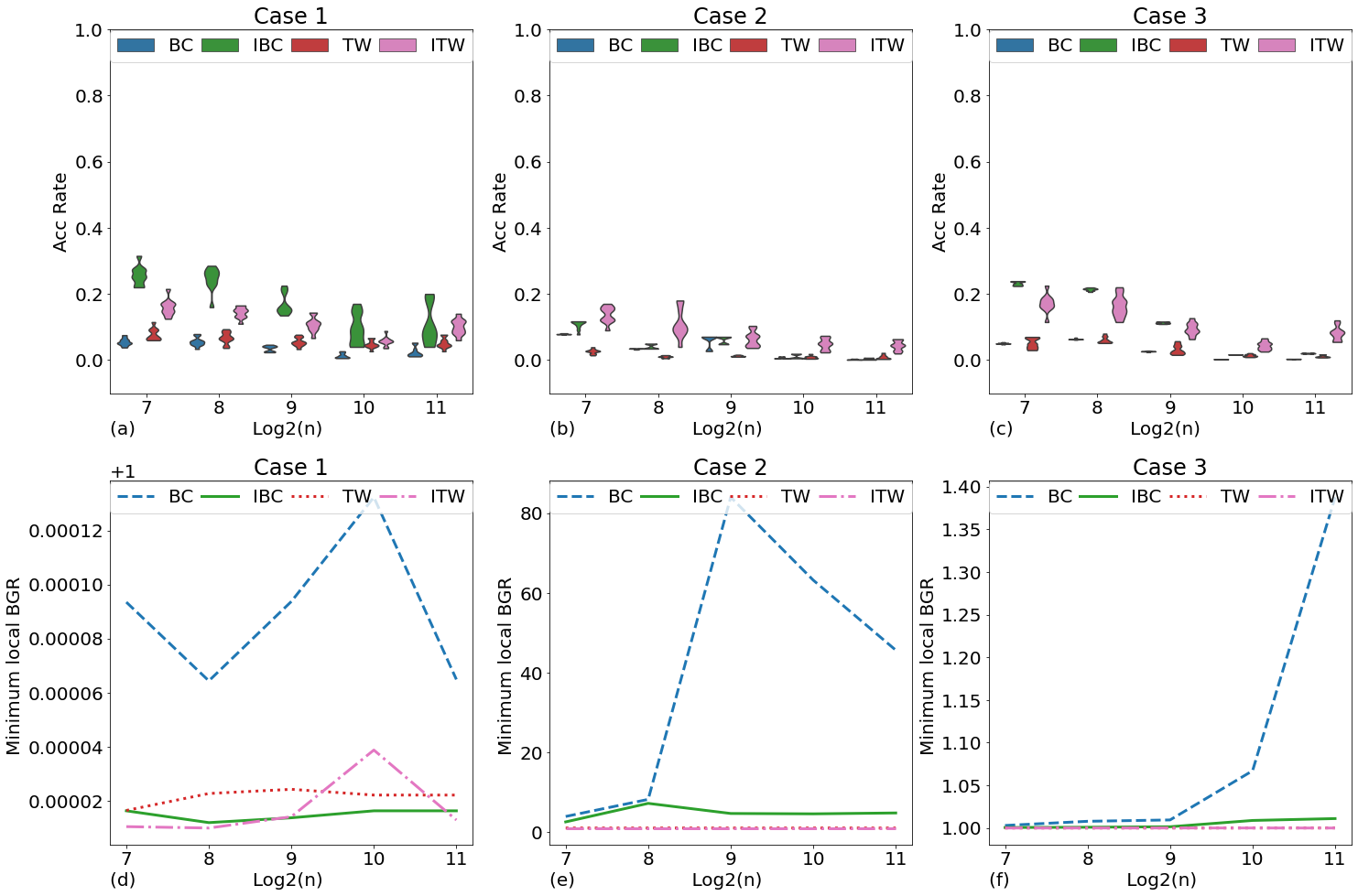}
    \caption{The acceptance rates, hit time and minimum local BGRs for Case (1) and (2). {(Legend) BC and IBC: original and informed Bayesian CART, TW and ITW: original and informed Twiggy Bayesian CART.}}
    \label{fig:tw2}
\end{figure}
\begin{figure}
    \centering
    \includegraphics[width=\linewidth]{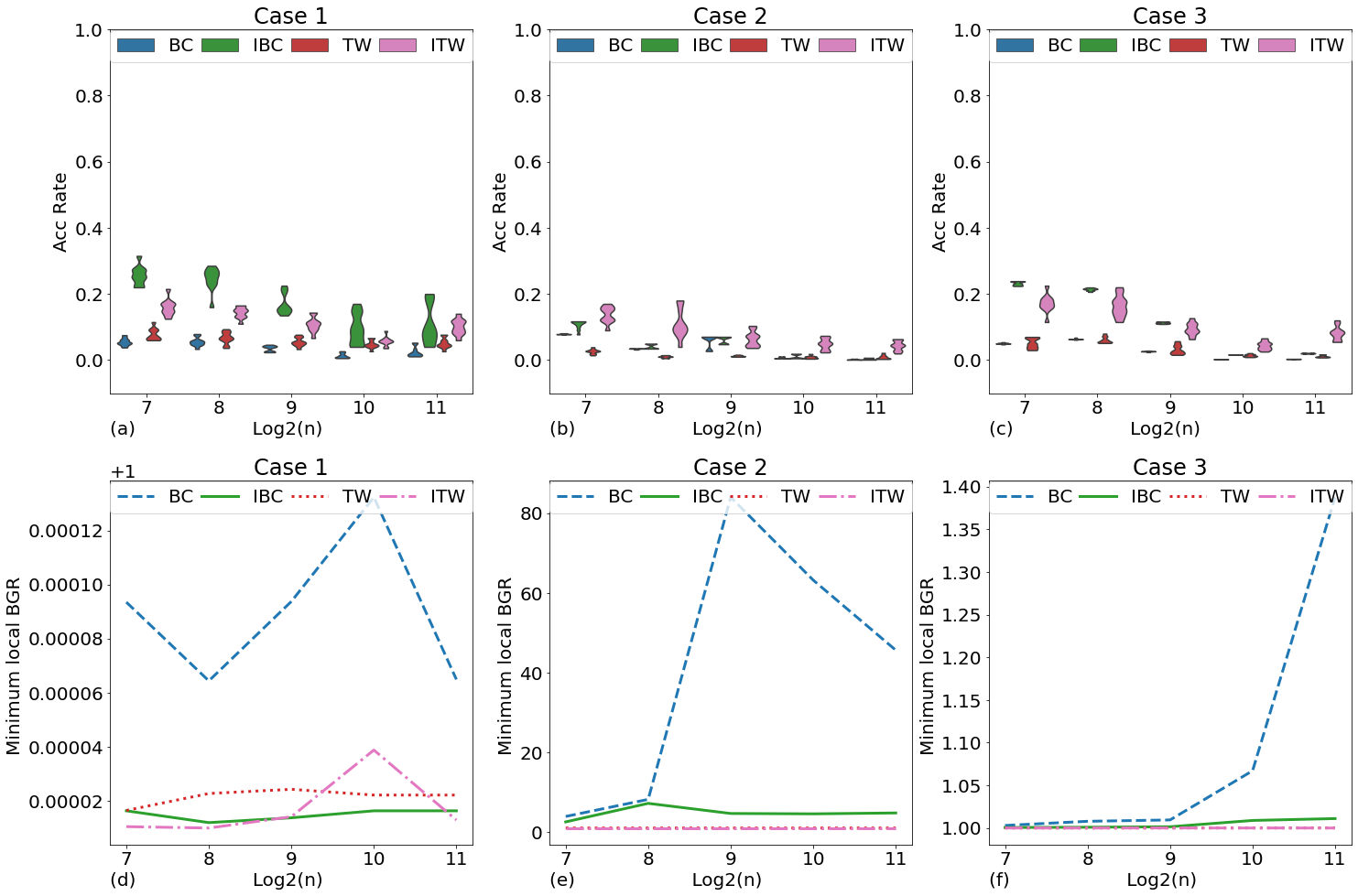}
    \caption{The MCMC performance measures for Case (3). {(Legend) BC and IBC: original and informed Bayesian CART, TW and ITW: original and informed Twiggy Bayesian CART.}}
    \label{fig:tw3}
\end{figure}

\begin{figure}
    \centering
    \includegraphics[width=\linewidth]{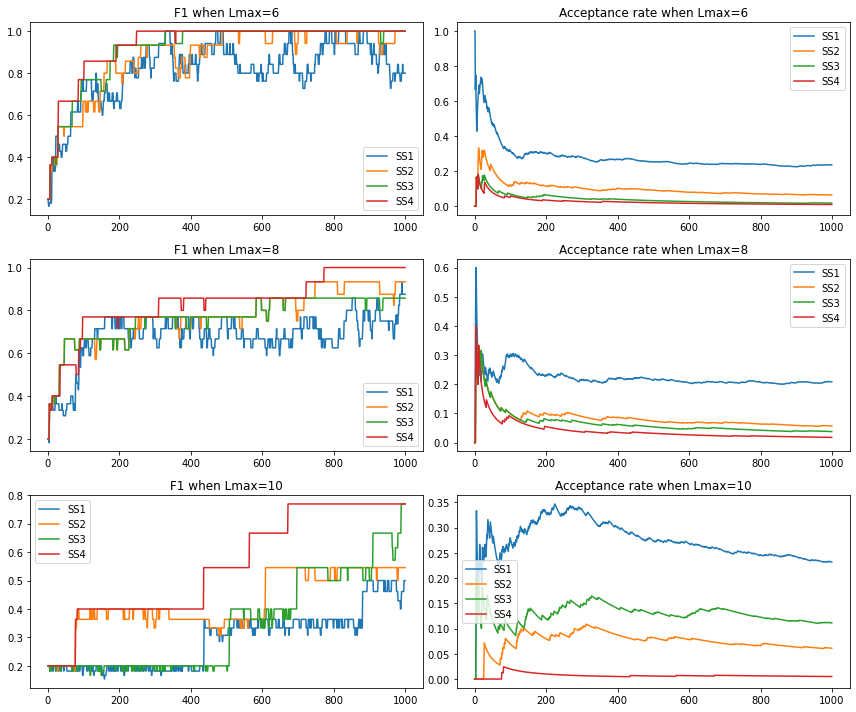}
    \caption{The behavior of Spike-and-Slab for Case (3) for different node inclusion priors for increasing data size ($\Lmax$). The x-axis in all plots are the number of iterations. SS1: $p_{lk} = 0.25/2^{\Lmax-6}$. SS2: $p_{lk} = 0.05/2^{\Lmax-6}$. SS3: $0.01\, n^{1/4} 2^{-l/2}$. SS4: $0.01\, n^{1/4} 6^{-l/2}$. SS4 has the smallest node inclusion prior, and so the smallest acceptance rate. However, in terms of grabbing the true signals without overfitting, SS4 shows the best performance.}
    \label{fig:ss_tune}
\end{figure}

\begin{figure}
    \centering
    \includegraphics[width=\linewidth]{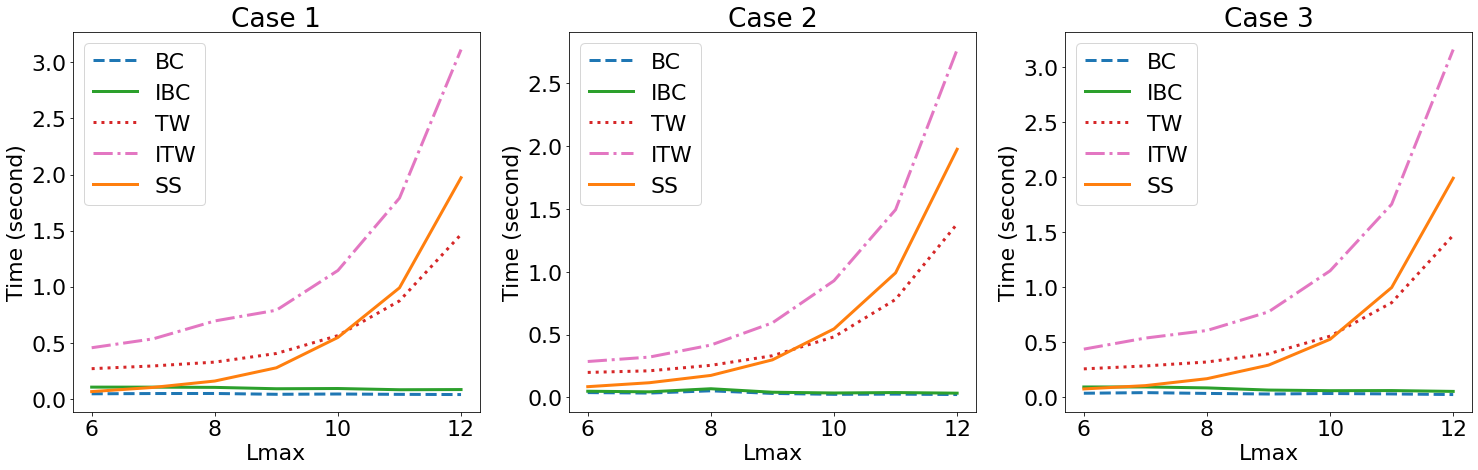}
    \caption{The computational times. Informed Bayesian CART is generally slower than Bayesian CART due to the time calculating the proposal probabilities (e.g., \eqref{eq:wp}).}
    \label{fig:comp}
\end{figure}

\begin{figure}
    \centering
    \includegraphics[width=\linewidth]{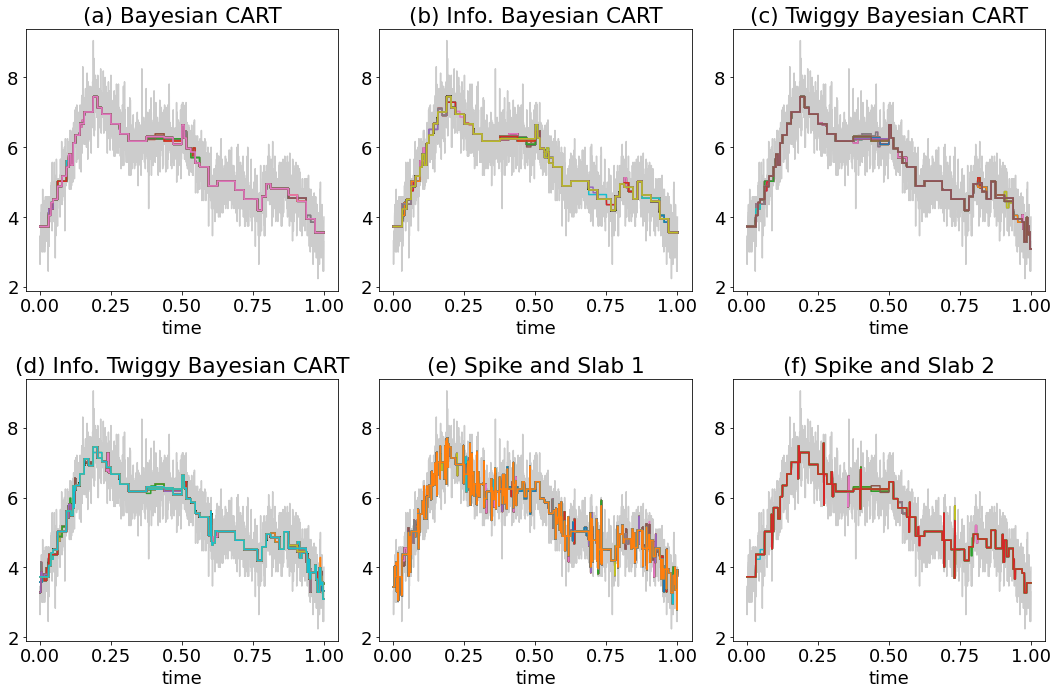}
    \caption{The visualization of 1000 samples after 10,000 burn-in of the MCMC chains on Call Center Data. The gray lines are the data. (a) Bayesian CART (b) informed Bayesian CART (c) Twiggy Bayesian CART (d) informed Twiggy Bayesian CART (e) Spike-and-Slab (prior: $p_{lk}^{ss,1} = 0.01$) (f) Spike-and-Slab (prior: $p_{lk}^{ss,2} = 0.01 \times 6^{-l/2}$)}
    \label{fig:call6}
\end{figure}

\clearpage

\bibliographystyle{abbrvnat}

\bibliography{ref}

\end{document}